\documentclass[reqno]{amsart}
\usepackage{graphicx}
\usepackage{pb-diagram}
\usepackage{pstricks}
\usepackage[all]{xy}
\theoremstyle{theorem}
\newtheorem{theorem}{Theorem}[section]
\newtheorem{corollary}[theorem]{Corollary}
\newtheorem{lemma}[theorem]{Lemma}
\newtheorem{prop}[theorem]{Proposition}

\theoremstyle{definition}
\newtheorem{definition}{Definition}[section]
\newtheorem{remark}[definition]{Remark}

\newtheorem{ques}[definition]{Question}
\numberwithin{equation}{section}

\newcommand{\CC}{\mathbb{C}}
\newcommand{\RR}{\mathbb{R}}
\newcommand{\QQ}{\mathbb{Q}}
\newcommand{\ZZ}{\mathbb{Z}}
\newcommand{\NN}{\mathbb{N}}

\newcommand{\OM}{\overline{m}}
\newcommand{\FC}{\mathfrak{C}}
\newcommand{\CF}{\mathcal{F}}

\newcommand{\FA}{\mathfrak{A}}
\newcommand{\FF}{\mathfrak{F}}
\newcommand{\FN}{\mathfrak{N}}
\newcommand{\FM}{\mathfrak{M}}

\newcommand{\CM}{\mathcal{M}}

\newcommand{\CE}{\mathcal{E}}

\newcommand{\AI}{A_\infty}
\newcommand{\LI}{L_\infty}

\newcommand{\WH}[1]{\widehat{\overline{#1}}}
\newcommand{\HH}[1]{\widehat{#1}}

\newcommand{\WT}[1]{\widetilde{#1}}
\newcommand{\OL}[1]{\overline{#1}}
\newcommand{\UL}[1]{\underline{#1}}
\newcommand{\E}{\epsilon}
\newcommand{\kk}{\boldsymbol{k}}
\newcommand{\NOV}{\Lambda_{nov}}
\newcommand{\NOVO}{\Lambda_{0,nov}}
\newcommand{\NOVE}{\Lambda_{nov}^{(e)}}
\begin{document}
\title{On the obstructed Lagrangian Floer theory}
\author{Cheol-Hyun Cho}
\address{Department of Mathematics, Seoul National University,
Kwanakgu Shinrim, San56-1 Seoul, South Korea, Email:
chocheol@snu.ac.kr}

\begin{abstract}
Lagrangian Floer homology in a general case has been constructed by Fukaya, Oh, Ohta and Ono, where they construct an $\AI$-algebra or an $\AI$-bimodule from Lagrangian submanifolds, and studied their obstructions and deformation
theories.  But for obstructed Lagrangian submanifolds, the standard Lagrangian Floer homology can not be defined. 

We explore several well-known cohomology theories on these $\AI$-objects and explore their properties, which are well-defined and invariant even in the obstructed cases.  These are Hochschild and cyclic homology of an $\AI$-objects and Chevalley-Eilenberg or cyclic Chevalley-Eilenberg homology of
their underlying $\LI$ objects. We explain how the existence of $m_0$ effects the usual homological algebra of these homology theories.
We also provide some computations. We show that for an obstructed $\AI$-algebra with a non-trivial primary obstruction, Chevalley-Eilenberg Floer homology vanishes, whose proof is inspired by the comparison with cluster homology theory of Lagrangian submanifolds by Cornea and Lalonde.

In contrast, we also provide an example of an obstructed case whose cyclic Floer homology is non-vanishing.
\end{abstract}
\thanks{This work was supported by the Korea Research Foundation Grant funded by the Korean Government (MOEHRD, Basic Research Promotion Fund) (KRF-2008-C00031)}
\maketitle

\section{Introduction}
Floer homology invented by Floer \cite{Fl}, has proven to be a very
powerful tool in the symplectic geometry and related areas. In the case of Floer cohomology of Lagrangian submanifolds, Fukaya, Oh,
Ohta and Ono \cite{FOOO} has defined the $\AI$-algebra of a Lagrangian
submanifold or the $\AI$-bimodule of a pair of Lagrangian
submanifolds in the full generality. Unlike the case of Hamiltonian Floer cohomology, these $\AI$-algebra or $\AI$-bimodules do not always define
Floer homologies as they are obstructed in general. In \cite{FOOO}, they have studied the obstruction theory and showed that if the obstructions vanish, these $\AI$-structures give rise to the Floer cohomology theories, and can be applied to the study of 
symplectic topology or homological mirror symmetry (see \cite{K2}, \cite{CO}, \cite{FOOO}, \cite{FOOO2} for example).

But if a Lagrangian submanifold is obstructed (see section 2 and 7 for its definition), its Floer cohomology is not defined. In this paper, we explore
alternative ways of defining homology, by considering Hochschild and cyclic homology of an $\AI$-objects and Chevalley-Eilenberg or cyclic Chevalley-Eilenberg homology of their underlying $\LI$ objects.

Such homology theories are well-known for associative (or Lie) algebras and also for $\AI$, or $\LI$-algebras without $m_0$.
(We refer readers to \cite{HL} for the definitions using non-commutative geometry). The definition easily extends to the
case with with $m_0$, but it turns out that the usual homological algebra properties of these homology theories do not
immediately extend as the usual contracting homotopy of the bar complex does not work with $m_0 \neq 0$. But by working with Novikov fields, we
study their homological algebras and show that we still have the reduced Hochschild homology, and $(b,B)$-cyclic complex where Conne-Tsygan $B$-operator actually has an additional term compared to the standard case.

The main motivation to study these homology theories is to have a well-defined Floer homology theory even in the obstructed cases.
We show that even in the obstructed cases, these homology theories are well-defined and invariant under various choices involved  and 
define invariants of a homotopy class of $\AI$-objects. 

We remark that there has been different approaches to consider obstructed cases, by Cornea and Lalonde \cite{CL} using Morse functions and by Fukaya \cite{Fu} using the relationship with loop space homology and Floer homology.

We observe that the topological dual theory of the Chevalley-Eilenberg homology theory of $\LI$-objects is related to 
the cluster homology theory announced by Cornea and Lalonde in \cite{CL}. Their cluster homology corresponds to the extended
cyclic Chevalley-Eilenberg homology of $\LI$-algebras and their symmetric fine Floer homology corresponds to Chevalley-Eilenberg homology
of $\LI$-modules. Unfortunately, analytic details of the construction of cluster homology theory in \cite{CL} has not been rigorously established yet,
but the homology theories in this paper may provide an alternative way to consider such theories in the obstructed cases.

We remark that the study of the dual geometry of an $\AI$ or $\LI$-algebras which was initiated by Kontsevich \cite{K1}.
The $\AI$-algebra, which is a coalgebra has a dual which can be regarded as a differential graded algebra (DGA). 
Actually, in contact geometry, the dual language has been mostly used (see Chekanov \cite{Che}, Eliashberg-Givental-Hofer \cite{EGH} for example)
and the analogue of unobstructed condition is the notion of augmentation. In both cases, if it is unobstructed or has an augmentation, the deformed
homology theory provides much more refined informations.

Due to the use of Novikov coefficients, to take the appropriate dual of the filtered $\AI$ or $\LI$ objects, we consider topological duals induced by energy filtrations.
In fact, the completion used in this paper is somewhat different from that of Cornea and Lalonde, resulting different behavior of the 
homology theories in obstructed cases. But this does not bring a major difference as Cornea has informed me that the filtration used here also works in the cluster homology setting.

There is an easy way to obtain cyclic homology complex of an $\AI$-algebra or a cyclic Chevalley-Eilenberg homology complex from the bar complex.
Recall that $\AI$-algebra $C$, which is given by countably many
operations $\{m_k\}$, is algebraically a tensor coalgebra $T(C[1])$
with a codifferential $\HH{d} = \sum_k \HH{m}_k$. The complex $(T(C[1]),\HH{d})$ is called a bar complex, whose
homology is trivial(see Lemma \ref{bartrivial}). One can consider cyclic or symmetric bar
complex, which is a subcomplex of the bar complex by considering the
fixed elements of the natural cyclic or symmetric group action. The homology of these
subcomplexes are in fact the cyclic homology of $\AI$-algebra or cyclic Chevalley-Eilenberg homology of the induced $\LI$-algebra.
Here, as any associative algebra can be regarded as a Lie algebra whose
bracket is given by the commutator, an $\AI$-algebra ($\AI$-module) gives rise to an underlying $\LI$-algebra ($\LI$-module) by symmetrizing all $\AI$-operations and Chevalley-Eilenberg homology is their Lie algebra homology. 
We remark that the existence and invariance of homology of these subcomplexes has been known to authors of \cite{FOOO}.
(Note that what we call cyclic bar complex is different from the cyclic bar complex of Getzler-Jones \cite{GJ})

We will also consider Hochschild homology of $\AI$-bimodule of a pair of Lagrangian submanifolds when one Lagrangian submanifold is obtained as a Hamiltonian isotopy of the other and this homology contains information about 
their intersections. Similarly, one can consider the induced $\LI$-module of such pair over the $\LI$-algebra of such a Lagrangian submanifold and
consider its Chevalley-Eilenberg homology.

\begin{theorem}
If $A$ is an obstructed $\AI$-algebra of Lagrangian submanifold with non-trivial primary obstruction, then the Chevalley-Eilenberg Floer (co)homology,
and the extended cyclic Chevalley-Eilenberg Floer cohomology vanish.
\end{theorem}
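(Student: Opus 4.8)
The plan is to produce an explicit contracting homotopy on the Chevalley--Eilenberg complex $(\HE,\HH{d})$ of the underlying $\LI$-algebra $(C,\{\ell_k\})$, built from the unit together with the primary obstruction. Write $e\in C$ for the unit (which is odd in $C[1]$) and let $\varepsilon\in(C[1])^{*}$ be the dual functional with $\varepsilon(e)=1$; by definition the primary obstruction is the coefficient $\varepsilon(m_{0})\in\NOVO$, and the hypothesis that it is non-trivial means precisely that $\varepsilon(\ell_{0})=\varepsilon(m_{0})=cT^{\lambda_{0}}+(\textrm{higher energy})$ with $c\neq 0$, so that $\varepsilon(\ell_0)$ is \emph{invertible in $\NOV$}. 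First I would introduce the contraction $\iota_{\varepsilon}$, the odd coderivation of the graded-cocommutative coalgebra $\HE$ determined by $\varepsilon$, and set $P:=\iota_{\varepsilon}\,\HH{d}+\HH{d}\,\iota_{\varepsilon}$.

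The key computation is the graded commutator $P=\sum_{k\ge 0}[\iota_{\varepsilon},\HH{\ell}_{k}]$. Because $\iota_{\varepsilon}$ sends a tensor factor to a scalar rather than reinserting it into an operation, the only terms surviving each commutator are those in which $\varepsilon$ contracts the \emph{output} of $\ell_{k}$; the terms contracting an untouched factor cancel against the corresponding terms of $\HH{\ell}_{k}\,\iota_{\varepsilon}$. Thus $[\iota_{\varepsilon},\HH{\ell}_{k}]$ is the coderivation with single Taylor coefficient $\varepsilon\circ\ell_{k}\colon(C[1])^{\odot k}\to\NOV$. For $k=0$ this is the scalar $\varepsilon(\ell_{0})$ acting as $\varepsilon(\ell_{0})\cdot\mathrm{id}$, which preserves word-length, while for $k\ge 1$ the coderivation $\varepsilon\circ\ell_{k}$ strictly lowers word-length by $k$. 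Hence $P=\varepsilon(\ell_{0})\,\mathrm{id}+L$, where $L$ strictly decreases the word-length filtration of $\HE$.

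Since $\varepsilon(\ell_{0})$ is invertible in $\NOV$ and $L$ is strictly lower-triangular for the word-length filtration, $P=\varepsilon(\ell_{0})\bigl(\mathrm{id}+\varepsilon(\ell_{0})^{-1}L\bigr)$ is invertible, its inverse being the locally finite Neumann series $\sum_{j\ge 0}(-\varepsilon(\ell_{0})^{-1}L)^{j}$. As $P$ commutes with $\HH{d}$ (a formal consequence of $\HH{d}^{2}=0$ and the graded Jacobi identity), the operator $\iota_{\varepsilon}P^{-1}$ is a contracting homotopy, $\HH{d}\,(\iota_{\varepsilon}P^{-1})+(\iota_{\varepsilon}P^{-1})\,\HH{d}=\mathrm{id}$, so the Chevalley--Eilenberg Floer homology vanishes. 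Dualizing this homotopy with respect to the energy-filtered topological dual yields the same vanishing in cohomology. For the extended cyclic theory I would filter the $(b,B)$-type bicomplex (with the extra $m_{0}$-term in the $B$-operator) by the number of applications of $B$, so that its $E_{1}$-page is the Chevalley--Eilenberg homology just shown to vanish, whence the spectral sequence collapses to zero.

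The main obstacle is the convergence, hence the invertibility, of $P$ over the completion actually used. On the finite-word-length complex the Neumann series is locally finite and presents no difficulty, but because $\varepsilon(\ell_{0})^{-1}$ carries the negative power $T^{-\lambda_{0}}$, each application of $L$ drives the energy downward; on the completed complex one must therefore verify that the series remains inside the Novikov completion induced by the energy filtration, and this is exactly the point at which the particular completion adopted in this paper (rather than that of Cornea--Lalonde) becomes essential. A secondary but genuinely delicate matter is the sign bookkeeping in the commutator $[\iota_{\varepsilon},\HH{\ell}_{k}]$ and the careful verification of the cancellation of the untouched-factor terms, which is what isolates $\varepsilon(\ell_{0})\,\mathrm{id}$ as the unique length-preserving part of $P$. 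Once the invertibility of $P$ is secured, the vanishing of all three theories follows formally.
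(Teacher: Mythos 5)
There are two genuine gaps here, and the first is fatal to the whole strategy. You assert that ``non-trivial primary obstruction'' means precisely that $\varepsilon(m_0)$ is invertible, where $\varepsilon$ is dual to the unit. This is false: the primary obstruction cycle is $\mathcal{O}_s=m_{0,\beta_s}(1)$ for the minimal-energy classes $\beta_s$, and it lives in degree $2-\mu(\beta_s)$ of $H^*(L)$, whereas the unit sits in $H^0(L)$. Hence $\varepsilon(m_{0,\beta_s}(1))=0$ for degree reasons unless $\mu(\beta_s)=2$, and the hypothesis of the theorem can hold with $\varepsilon(m_0)$ identically zero --- for instance for a Lagrangian admitting only non-positive Maslov index discs, exactly the situation of Theorem \ref{thm:2}, where $m_{0,\beta}(1)\in H^{2-\mu(\beta)}(L)$ has degree $\geq 2$ for every $\beta$ yet the primary obstruction may well be non-trivial. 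In all such cases your operator $P$ has \emph{no} invertible length-preserving part, so the contracting homotopy $\iota_\varepsilon P^{-1}$ simply does not exist. The paper's proof instead uses a dual variable $x_s$ to the obstruction cycle $\mathcal{O}_s$ itself (in the canonical model, where non-triviality of the class guarantees $\mathcal{O}_s\neq 0$ as an element), in whatever degree it happens to lie; the ``unit'' that gets trivialized is the unit $1$ of the dual commutative DGA $CE^\bullet(C)$, not the unit of the $\AI$-algebra.

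The second gap is the convergence issue, which you correctly flag as ``the main obstacle'' but leave unresolved --- and with the completion used in this paper it cannot be resolved. Each application of $\varepsilon(\ell_0)^{-1}L$ lowers word length by at least one but can lower energy by $\lambda_0=\tau(\varepsilon(\ell_0))$, since the energy-zero operations $\OL{\ell}_k$ ($k\geq 1$) contribute to $L$; elements of the completed complex $\HH{E}C$ contain terms of arbitrarily large length whose energy grows more slowly than $\lambda_0\cdot(\mathrm{length})$, so the Neumann series $\sum_j(-\varepsilon(\ell_0)^{-1}L)^j$ produces unbounded negative energy and leaves the completion. This is exactly the distinction between the filtration of this paper and that of Cornea--Lalonde \cite{CL} discussed at the end of Section 10.1: such series are legitimate in the cluster-complex filtration (\ref{CLfilt}) but are explicitly disallowed here, and this is why the present theory can distinguish obstructed from unobstructed cases at all. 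The paper circumvents this by working on the topological dual, where by Lemma \ref{dual} infinite sums need only have valuations bounded below, and by choosing the obstruction class at \emph{minimal} energy $\lambda$, so that $\HH{d}^*y=1+h$ with $\WT{\tau}(h)\geq 0$; then the geometric series $\sum_j(-1)^jh^j$ stays in the completion, Proposition \ref{lem:cevanish} kills the DGA cohomology, the module (Chevalley--Eilenberg) cohomology dies because it is a DG module over an acyclic unital DGA, and the homology statements follow via Lemma \ref{ceiso}. Finally, your spectral-sequence reduction of the extended cyclic theory to the CE theory is structurally misplaced: the cyclic Chevalley--Eilenberg theory \emph{is} the Chevalley--Eilenberg theory with trivial coefficients (the $(b,B)$-complex belongs to the cyclic homology of the $\AI$-algebra, not to the $\LI$-side), and in the paper the implication runs in the opposite direction, from the vanishing of the cyclic (DGA) theory to the vanishing of the module theory.
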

Recall that in \cite{CL}, cluster complex with free terms has vanishing cluster homology. This may be interpreted as the vanishing of Chevalley-Eilenberg Floer homology when $m_0 \neq 0$ if we use the filtration of \cite{CL}. Hence one can notice the subtlety in choosing filtrations.

In contrast, we find a very different phenomenon of cyclic homology of $\AI$-algebra in some cases.
\begin{theorem}\label{thm:2}
Let $L$ be a relatively spin compact Lagrangian submanifold in a closed (or with convex boundary) symplectic manifold, which admits only non-positive Maslov index pseudo-holomorphic discs with boundary on $L$. 
Then, its cyclic homology of the $\AI$-algebra of $L$ is non-trivial even when $L$ is obstructed.
\end{theorem}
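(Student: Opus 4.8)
The plan is to exhibit an explicit non-vanishing class in the cyclic homology $HC_*(A)$ coming from the unit, and to show that the non-positive Maslov hypothesis is exactly what keeps this class alive. The starting observation is a degree count for the obstruction. Writing $m_0=\sum_{\beta\neq 0} m_{0,\beta}\,T^{\omega(\beta)}$, the cohomological degree of $m_{0,\beta}\in H^*(L)$ equals $2-\mu(\beta)$; since every non-constant disc has $\mu(\beta)\le 0$, we get $\deg m_{0,\beta}\ge 2$. Thus, in sharp contrast with the positive Maslov situation of Theorem 1, the obstruction $m_0$ has no component in $H^0(L)$: it is never a nonzero scalar multiple of the unit $\mathbf 1$.

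First I would set up the energy filtration on the cyclic (symmetric) bar complex, filtering by the exponent of the Novikov parameter $T$. The associated graded differential retains only the energy-zero operations $m_k^{(0)}$ coming from constant discs, for which $m_0^{(0)}=m_1^{(0)}=0$ and $m_2^{(0)}=\cup$; these assemble into the classical minimal $\AI$-structure on $H^*(L)$. Hence the $E_1$-page of the resulting spectral sequence is the ordinary cyclic homology of this energy-zero $\AI$-algebra, tensored with $\NOV$. This is non-trivial: because $m_1^{(0)}=0$ and $\cup$ is graded commutative one finds $HC_0^{(0)}=H^*(L)/[H^*(L),H^*(L)]=H^*(L)\neq 0$, and in particular the unit class $[\mathbf 1]\in HC_0$ is a nonzero element of $E_1$ sitting in cohomological degree $0$ and energy $0$.

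The heart of the argument is to show that $[\mathbf 1]$ is a permanent cycle and is not a boundary in the full complex. As a length-one tensor, $\mathbf 1$ is annihilated by all $\HH{m}_k$ with $k\ge 1$ by unitality ($m_1(\mathbf 1)=0$, and $m_k(\dots,\mathbf 1,\dots)=0$ for $k\ge 3$), so the only contribution to $\HH{d}(\mathbf 1)$ is the curvature insertion $\mathbf 1\otimes m_0\pm m_0\otimes\mathbf 1$, whose two summands are cyclic rotations of one another and therefore cancel in the cyclic complex. Thus $\mathbf 1$ is a genuine cyclic cycle and $[\mathbf 1]$ survives all higher differentials to the abutment. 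That it is not a boundary is where the hypothesis enters decisively: in the positive Maslov case a nonzero scalar component $m_0=c\,\mathbf 1+\cdots$ lets one gauge the unit away and trivializes the homology, exactly as for the cluster complex with free terms and as in Theorem 1; here $\deg m_{0,\beta}\ge 2$ forbids such a scalar term, so no such contracting mechanism exists and nothing can map onto $[\mathbf 1]$. The main obstacle I anticipate is precisely this last point — keeping track of the cyclic signs in $\HH{d}(\mathbf 1)$ and verifying rigorously that the absence of a degree-zero part of $m_0$ prevents $[\mathbf 1]$ from becoming a boundary, rather than merely a permanent cycle.

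Finally I would address convergence: since $L$ is compact the energy filtration is exhaustive and, over the Novikov field $\NOV$, the complex is complete with respect to it, so the spectral sequence converges to $HC_*(A)$. Combining the three steps, $[\mathbf 1]$ represents a nonzero class in $HC_*(A)$, proving non-triviality even in the obstructed case.
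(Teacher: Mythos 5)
Your proposal breaks down at its central step: the claim that the unit is a cyclic cycle. With the Koszul signs of the paper's conventions, both $m_0$ and the unit $L=PD[L]$ have \emph{odd} shifted degree, so the cyclic rotation of $m_0\otimes L$ is $-L\otimes m_0$, not $+L\otimes m_0$. Consequently
$$\HH{d}(L)=\HH{m}_0(L)=m_0\otimes L-L\otimes m_0=N_2(m_0\otimes L),$$
which is a cyclically \emph{invariant} element and is nonzero precisely because, as your own degree count shows ($\deg m_{0,\beta}\ge 2$ when $\mu(\beta)\le 0$), $m_0$ is not a scalar multiple of $L$. The same happens in the quotient (Connes) picture: there the differential is $d^{Hoch}$, and $d^{Hoch}(L)=-L\otimes m_0$, which does not lie in $\mathrm{im}(1-t)$. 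So far from the two insertion terms cancelling, the non-positive Maslov hypothesis \emph{guarantees} the unit is not a cycle; your "permanent cycle" step is false, and with it the spectral-sequence argument collapses (independently of this, convergence of the energy spectral sequence for the full cyclic complex is itself a serious unresolved issue, which the paper sidesteps by never invoking convergence for this theorem). The paper's proof is designed exactly around this failure: one must correct the unit by an infinite series
$$\alpha=\sum_{k=0}^{\infty}(-1)^k N_{2k+1}\bigl(L\otimes m_0\otimes L\otimes\cdots\otimes m_0\otimes L\bigr),$$
and the identity $\HH{m}_0(\alpha_{2k-1})=\HH{m}_2(\alpha_{2k+1})$ (a genuine computation using unitality, $m_2(L,m_0)=m_2(m_0,L)=m_0$, and cancellations inside the cyclic symmetrization) shows $\HH{d}(\alpha)=0$.

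Your non-triviality argument is also only a gesture ("no contracting mechanism exists"), whereas this is exactly where the Maslov hypothesis does its real work in the paper: an element in the image of $\HH{d}$ can never have a length-one component equal to $L$, because (a) if no input equals $L$, then $m_{k,\beta}$ raises degree by $1-\mu(\beta)>0$ in shifted degree, so its output cannot be the unique shifted-degree $(-1)$ element $L$; and (b) if some input equals $L$, unitality kills everything except $m_2(L,L)$, and $L\otimes L$ does not occur in the cyclic complex since $N_2(L\otimes L)=0$. Hence the leading term $L$ of $\alpha$ cannot be hit, and $[\alpha]\neq 0$. In short: your degree observation about $m_0$ is correct and relevant, but it is an input to the non-boundary argument, not a substitute for the correction series, which is the essential missing idea in your proposal.
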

The proof of the above theorem relies on the construction of explicit non-vanishing element of the
cyclic homology in such a case.

\begin{theorem}
Let $L$ be a relatively spin compact Lagrangian submanifold in a closed (or with convex boundary) symplectic manifold, which is
displaceable by a Hamiltonian isotopy. Here $\AI$-algebra of $L$ may be obstructed.
Then, its Hochschild homology of the $\AI$-algebra of $L$ vanishes and
also the Chevalley-Eilenberg homology vanishes
\end{theorem}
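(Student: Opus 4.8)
The plan is to reduce the two vanishing statements to the triviality of the $\AI$-bimodule attached to the displacing pair. Write $A$ for the $\AI$-algebra of $L$ and fix a Hamiltonian diffeomorphism $\psi$ with $\psi(L)\cap L=\emptyset$. The starting point is the standard identification of Hochschild homology of an $\AI$-algebra with the homology of its diagonal bimodule, $HH_*(A)\cong HH_*(A,A_\Delta)$, where geometrically $A_\Delta$ is the $\AI$-bimodule of the self-pair $(L,L)$. This is exactly the invariant whose well-definedness over $\NOV$ was established in the earlier sections; the care needed there to keep the differential squaring to zero in the presence of $m_0$, via the energy filtration, is what makes $HH_*$ available in the obstructed case at all.

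First I would compare the diagonal bimodule with the displaced pair using Hamiltonian invariance. The diffeomorphism $\psi$ identifies $A(\psi(L))$ with $A$, and the continuation maps of \cite{FOOO} upgrade, in the filtered setting, to a homotopy equivalence of $\AI$-bimodules between $M:=CF(L,\psi(L))$, regarded over $A$ through this identification, and $A_\Delta$. Because $\psi$ is Hamiltonian, the two curvatures $m_0^{L}$ and $m_0^{\psi(L)}$ are matched under the identification, so $M$ is a genuine (flat) $\AI$-bimodule and the comparison respects the energy filtration. Invariance of Hochschild homology under homotopy equivalence of $\AI$-bimodules, proved earlier, then yields $HH_*(A)\cong HH_*(A,M)$.

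The conclusion is now immediate. Displaceability gives $\psi(L)\cap L=\emptyset$, so the Floer complex $CF(L,\psi(L))$ is the zero module; hence $M=0$ and $HH_*(A,M)=0$, i.e. $HH_*(A)=0$. Equivalently, the equivalence $M\simeq A_\Delta$ with $M=0$ exhibits the diagonal bimodule as null-homotopic. For the Chevalley--Eilenberg statement I would apply symmetrization: the functor sending $A$ to its underlying $\LI$-algebra and $\AI$-bimodules to $\LI$-modules preserves homotopy equivalences, so it carries $M\simeq A_\Delta$ to a homotopy equivalence of $\LI$-modules; since $M=0$, the $\LI$-module computing the relevant Chevalley--Eilenberg homology is null-homotopic, and that homology vanishes as well.

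The hard part will be establishing Hamiltonian invariance at the level of curved, filtered $\AI$-bimodules rather than of ordinary Floer cohomology attached to a chosen bounding cochain. In the obstructed case there is no bounding cochain available to fix the potential, so one must verify directly that the continuation-strip moduli assemble into a bimodule chain homotopy equivalence over $\NOV$ and that the $m_0$-contributions from the two boundary components cancel, making $M$ an honest bimodule; controlling these sums through the energy filtration is precisely what the Novikov structure is needed for, and is the only genuinely delicate point.
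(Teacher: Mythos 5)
Your proposal follows essentially the same route as the paper: the paper cites FOOO Theorem 22.14 for the weakly filtered $\AI$-bimodule homotopy equivalence $\Phi: (C(\phi_1(L),L;\NOV),n) \to (C(L,L;\NOV),n)$ over $(f,id)$, applies its earlier invariance result (homotopy equivalences of gapped, possibly weakly filtered, $\AI$-bimodules induce isomorphisms on Hochschild and Chevalley--Eilenberg homology) to get $H_\bullet(A,M)\cong H_\bullet(A,A)$ and its $\LI$ analogue, and concludes vanishing because the module $M$ is void when $L\cap\phi_1(L)=\emptyset$ --- exactly your argument, including the symmetrization step. The one imprecision is your claim that the comparison ``respects the energy filtration'': it is only \emph{weakly} filtered, losing energy up to the Hofer norm of the isotopy, which is precisely why the paper works over $\NOV$ rather than $\Lambda_{0,nov}$ and formulates its invariance proposition for weakly filtered homomorphisms.
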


Now, the following question is related to the Maslov class conjecture in the obstructed case, but
we do not know whether it is true or not.
\begin{ques}
Let $L$ be a relatively spin compact Lagrangian submanifold in a closed (or with convex boundary) symplectic manifold, which is
displaceable by a Hamiltonian isotopy. Does the cyclic homology of the $\AI$-algebra of $L$ vanish ?
Also does the cyclic Chevalley-Eilenberg homology vanish?
\end{ques}
If the answer for the first question is yes, then together with the theorem \ref{thm:2} it should prove the Maslov class conjecture even for
the obstructed relatively spin compact Lagrangian submanifolds in $\CC^n$ that its Maslov class vanishes
(see \cite{O},\cite{P},\cite{V},\cite{Fu2}) and we leave this for future research.

This paper is organized in the following way. In section 2, we recall basic
notions of $\AI$-algebra, $\AI$-bimodule, and their cyclic and
symmetric versions. We also recall various notions from \cite{FOOO} which is
needed to prove the results. In section 3, we show the isomorphism property of  related homology theories
for homotopy equivalent objects.
In section 4, we explain 
the Hochschild and Chevalley-Eilenberg homology and isomorphisms under weakly filtered homotopy equivalences.
In section 5, we consider cyclic version of the theories in section 4, and show the modification of homological algebra
with the presence of $m_0$.
In section 6, we recall results of \cite{FOOO}, and apply the discussed homology theories.
In section 7, we explain the relation between Maurer-Cartan element and Hochschild homology and augmentation.
In section 8, we find a non-trivial element in the cyclic Floer homology.
In section 9, we consider topological dual theories of the above and in section 10, we consider dualizations of Chevalley-Eilenberg homology and show its comparison to cluster homology theory of \cite{CL}.

{\bf Acknowlegements} We thank Kenji Fukaya, Octav Cornea, Yong-Geun Oh and Kaoru Ono for helpful communications.

\section{Algebraic setup}
We briefly recall the notions about $\AI$-algebras and $\AI$-bimodules, and their cyclic and symmetric versions.
We also recall gapped condition and $\AI$-homotopy from \cite{FOOO} to which we refer readers for details.

\subsection{$\AI$ and $\LI$-algebras}
Let $R$ be the field $\RR$. We can also consider instead $\CC$ or $\QQ$. But $\QQ$ can be used except in the last section
due to the Theorem \ref{thm:derham}.

Let $\OL{C}=\bigoplus_{j\in \mathbb{Z}} \OL{C}^{j}$ be a graded
vector space over $R$.
We denote the parity change(or suspension) as $( \OL{C}[1])^m =  \OL{C}^{m+1}$, and 
$|x_i|$ is the degree of the element $x_i$, and $|x_i|'$ is the shifted degree.
Hence $|x_i|=|x_i|'+1$.
We define
\begin{equation}\label{def:tk}
T_k(\OL{C}[1]) = \underbrace{\OL{C}[1] \otimes \cdots \otimes \OL{C}[1]}_{k},\;
 T_{1,\cdots,k}(\OL{C}[1]) = \oplus_{j=1}^k T_j(\OL{C}[1]).
\end{equation}
To simplify the notation, we set
$$B_k(C):=T_k(\OL{C}[1]), B_{1,\cdots,k}(C) =  T_{1,\cdots,k}(\OL{C}[1]).$$
\begin{definition}  The \textbf{tensor-coalgebra}
of $\OL{C}[1]$ over $R$ is given by
$ B\OL{C}:=\bigoplus_{k\geq 1} T_k(\OL{C}[1]),$ with the comultiplication defined by
$$ \Delta:B\OL{C}\longrightarrow B\OL{C}\otimes B\OL{C}, \,\,\,\,\,\,\,\,\,
   \Delta(v_{1}\otimes \cdots \otimes v_{n}):=\sum_{i=1}^{n} (v_{1}\otimes \cdots \otimes v_{i})
    \otimes(v_{i+1}\otimes \cdots \otimes v_{n}).$$
\end{definition}
Now, consider a family of maps
$$\OL{m}_k : T_k(\OL{C}[1]) \to \OL{C}[1], \ \ \textrm{for} \ k =1,2,\cdots.$$
We can extend $\OL{m}_k$ uniquely to a coderivation
\begin{equation}\label{eq:hatd}
\WH{m}_k(x_1 \otimes \cdots \otimes x_n) = \sum_{i=1}^{n-k+1}
(-1)^{|x_1|' + \cdots + |x_{i-1}|'} x_1 \otimes \cdots \otimes m_k(x_i,
\cdots, x_{i+k-1}) \otimes \cdots \otimes x_n
\end{equation}
for $k \leq n$ and $\WH{m}_k(x_1 \otimes \cdots \otimes x_n) =0$ for $k >n$.

The coderivation $\WH{d} = \sum_{k=1}^\infty \WH{m}_k$ is well-defined as a map
from $B\OL{C}$ to $B\OL{C}$.
The $\AI$-equations are equivalent to the equality
$\WH{d} \circ \WH{d} =0$, or equivalently,

\begin{definition}\label{def:ai}
 An $\AI$-algebra $(\OL{C},\{m_*\})$ consists of a $\ZZ$-graded vector space $\OL{C}$ over $R$ with
a collection of multi linear maps  $m:= \{ m_n:\OL{C}[1]^{\otimes n} \to \OL{C}[1]\}_{n\geq 1}$ of
degree one
 satisfying the following equation for each $k=1,2,\cdots.$
\begin{equation}\label{aiformula}
0=\sum_{k_1+k_2=k+1} \sum_{i=1}^{k_1-1} (-1)^{\epsilon_1} m_{k_1}(x_1,\cdots,x_{i-1},m_{k_2}(x_i,\cdots,x_{i+k_2-1}),\cdots,x_k)
\end{equation}
where $\epsilon_1= |x_1|'+ \cdots + |x_{i-1}|'$.
\end{definition}

Now, we explain the definition of an $\LI$-algebra.
First, consider an element $\sigma$ of the group $S_k$ of all permutations of the set $\{1,2,\cdots,k\}$.
The group $S_k$ act on $T_k(\OL{C}[1])$ by
\begin{equation}\label{gaction}
\sigma \cdot (x_1 \otimes \cdots \otimes x_k) = (-1)^{\E(\sigma,\vec{x})} x_{\sigma(1)} \otimes \cdots
\otimes x_{\sigma(k)},
\end{equation}
where
\begin{equation}\label{signperm}
\E(\sigma,\vec{x}) = \sum_{i,j \ \textrm{with} \ i<j,\ \sigma(i)>\sigma(j)}
(|x_i|' \cdot |x_j|').
\end{equation}
For example, if we denote the cyclic element $\sigma_0 =(1,2,\cdots,k) \in S_k$, note that
$$\sigma_0 \cdot (x_1\otimes \cdots \otimes x_k)= (-1)^{(|x_1|'(\sum_{i=2}^{k}|x_i|'))} x_2 \otimes \cdots \otimes x_{k} \otimes x_1.$$
\begin{definition}
Let $B^{cyc}_k\OL{C}$ be the set of fixed elements of the above $\sigma_0$ action on $B_k\OL{C}$, and denote 
$$ B^{cyc}\OL{C} = \oplus_{k=1}^\infty B^{cyc}_k\OL{C}.$$
\end{definition}
\begin{definition}
We define $E_k(\OL{C})$ to be the submodule of $B_k(\OL{C})$ consisting of
{\it fixed} elements of $S_k$-action on $B_k(\OL{C})$ and
let $$E\OL{C} = \oplus_{k=1}^\infty E_k(\OL{C}).$$
\end{definition}
\begin{remark}
Equivalently, one can instead use the quotient complex by defining the equivalence relation by the above cyclic or symmetric group action.
\end{remark}
For convenience, we use the following notation for the generators of $E\OL{C}$:
$$[x_1,\cdots,x_k]= \sum_{\tau \in S_k} (-1)^{\E(\tau,\vec{x})} x_{\tau(1)}\otimes \cdots \otimes x_{\tau(k)}.$$
One can easily check that
\begin{lemma}\label{lem:coalge}
There is  a coalgebra structure on $E\OL{C}$
$$\Delta: E\OL{C} \to E\OL{C} \otimes E\OL{C},$$ induced from $(B\OL{C},\Delta)$. This is graded commutative
and coassociative.
\end{lemma}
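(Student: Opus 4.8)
The plan is to prove the stronger statement that $E\OL{C}$ is a subcoalgebra of $(B\OL{C},\Delta)$, i.e. that the deconcatenation coproduct $\Delta$ sends $E\OL{C}$ into $E\OL{C}\otimes E\OL{C}\subseteq B\OL{C}\otimes B\OL{C}$. Once this restriction is established, coassociativity of the induced coproduct is inherited for free from that of $\Delta$ on $B\OL{C}$, and only the graded cocommutativity requires a genuinely new (sign) check. Since the symmetrized monomials $[x_1,\cdots,x_n]$ span $E_n(\OL{C})$, it suffices to compute $\Delta[x_1,\cdots,x_n]$ and read off that each tensor factor of the result is itself symmetric.

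Applying $\Delta$ termwise to $[x_1,\cdots,x_n]=\sum_{\tau\in S_n}(-1)^{\E(\tau,\vec{x})}x_{\tau(1)}\otimes\cdots\otimes x_{\tau(n)}$ produces a double sum over $\tau\in S_n$ and over the cut position $p$ (the extreme cut leaving an empty block is discarded, as $B\OL{C}$ has no degree-zero part). The key step is to reorganize this double sum according to the unordered datum of which size-$p$ index set $A\subseteq\{1,\cdots,n\}$ occupies the left block: for a fixed such $A$, the permutations $\tau$ with $\{\tau(1),\cdots,\tau(p)\}=A$ are precisely the $p!\,q!$ ($q=n-p$) ways of ordering $A$ on the left and $A^{c}$ on the right independently. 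Because the sign exponent $\E(\tau,\vec{x})$ counts inversions weighted by $|\cdot|'\,|\cdot|'$, it splits as the internal inversions of the left ordering, those of the right ordering, and a cross term $\eta(A)=\sum_{a\in A,\,b\in A^{c},\,a>b}|x_a|'\,|x_b|'$ that depends only on the sets $A,A^c$. Summing the $p!\,q!$ signed terms therefore factors as $(-1)^{\eta(A)}[x_A]\otimes[x_{A^c}]$, whence $\Delta[x_1,\cdots,x_n]=\sum_{p=1}^{n-1}\sum_{|A|=p}(-1)^{\eta(A)}[x_A]\otimes[x_{A^c}]\in\bigoplus_{p+q=n}E_p(\OL{C})\otimes E_q(\OL{C})$. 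This proves that $\Delta$ restricts to $E\OL{C}$.

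Coassociativity of the induced coproduct is then automatic, since $(\Delta\otimes\mathrm{id})\Delta=(\mathrm{id}\otimes\Delta)\Delta$ already holds on $B\OL{C}$ and $(E\OL{C})^{\otimes 3}$ embeds in $(B\OL{C})^{\otimes 3}$. For graded cocommutativity I apply the graded flip $T(a\otimes b)=(-1)^{|a|'|b|'}b\otimes a$ to the explicit formula above: as $A$ ranges over all subsets, $A^c$ ranges over their complements, so $T$ merely reindexes the double sum by $A\leftrightarrow A^c$, and the claim reduces to the sign identity $\eta(A)+\eta(A^c)=|x_A|'\cdot|x_{A^c}|'$ (with $|x_A|'=\sum_{a\in A}|x_a|'$), which holds because the left side sums $|x_a|'\,|x_b|'$ over all pairs with one index in each block. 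I expect this tracking of $\E(\tau,\vec{x})$ through the regrouping to be the main obstacle of the argument: one must verify both the factorization into $[x_A]\otimes[x_{A^c}]$ and the cocommutativity sign carefully with respect to the shifted degrees $|x_i|'$. The underlying structural fact — that the symmetric invariants of the tensor coalgebra form the cofree graded-cocommutative coalgebra — is standard, so modulo these signs the lemma is a direct computation on generators.
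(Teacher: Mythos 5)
Your proof is correct and follows essentially the same route as the paper: you apply the deconcatenation coproduct termwise to the symmetrized generators and regroup the double sum into symmetrized left/right blocks (your indexing by subsets $A$ is the same as the paper's indexing by $(i,k-i)$-shuffles), then deduce cocommutativity from the flip map $T$. The only difference is one of detail, not of method: you spell out the sign identity $\eta(A)+\eta(A^c)=|x_A|'\,|x_{A^c}|'$ underlying $T\circ\Delta=\Delta$, which the paper leaves as ``not hard to see,'' and you note explicitly that coassociativity is inherited from $B\OL{C}$.
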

\begin{remark}
The above coalgebra
structure will be used to define commutative algebra structure to the dual space of $E\OL{C}$.
Note that $B^{cyc}\OL{C}$ does {\em not} have an induced coalgebra structure.
\end{remark}
\begin{proof}
One can see that 
$$\Delta \big( \sum_{\sigma \in S_k} (-1)^{\E(\sigma,\vec{x})} x_{\sigma(1)} \otimes \cdots
\otimes x_{\sigma(k)} \big)$$
$$=\sum_{\sigma \in S_k, i} (-1)^{\E(\sigma,\vec{x})} \big( x_{\sigma(1)} \otimes \cdots \otimes x_{\sigma(i)}
\big) \otimes \big( x_{\sigma(i+1)} \otimes \cdots \otimes x_{\sigma(k)} \big)$$
$$=\sum_i \sum_{\sigma \in (i,k-i) shuffle}
 (-1)^{\E(\sigma,\vec{x})} \big([ x_{\sigma(1)} \otimes \cdots \otimes x_{\sigma(i-1)}]
\big) \otimes \big( [x_{\sigma(i+1)} \otimes \cdots \otimes x_{\sigma(k)}] \big)$$
Define $T:E\OL{C} \otimes E\OL{C} \to E\OL{C} \otimes E\OL{C}$
by $$T(\alpha \otimes \beta) = (-1)^{|\alpha|'|\beta'|} \beta \otimes \alpha,$$
where $\alpha,\beta$ are homogeneous elements of degree $|\alpha|',|\beta'|$ respectively.
Then, it is not hard to see that $T \circ \Delta = \Delta$ which proves the
cocommutativity.
\end{proof}
\begin{lemma}[\cite{FOOO},\cite{Fu}]
The codifferential $\WH{d}$  descends to the map $B^{cyc}\OL{C} \to B^{cyc}C$.
Also $\WH{d}$ induces a codifferential $\WH{d}: E\OL{C} \to E\OL{C}$.
\end{lemma}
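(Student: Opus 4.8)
The plan is to reduce both assertions to a single equivariance property of each insertion coderivation $\WH{m}_j$, and then read off the two conclusions. Since $\WH{d}=\sum_{j\geq 1}\WH{m}_j$ and each $\WH{m}_j$ lowers tensor length by $\WH{m}_j\big(B_n(C)\big)\subseteq B_{n-j+1}(C)$, and since $B^{cyc}\OL{C}=\oplus_n B^{cyc}_n\OL{C}$ and $E\OL{C}=\oplus_n E_n(\OL{C})$ are graded by length, it suffices to prove that for every $j$ and $n$ the map $\WH{m}_j$ carries the cyclic-invariant subspace $B^{cyc}_n\OL{C}$ into $B^{cyc}_{n-j+1}\OL{C}$ and the symmetric-invariant subspace $E_n(\OL{C})$ into $E_{n-j+1}(\OL{C})$. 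Granting the symmetric case, I would finish as follows: by Lemma \ref{lem:coalge} the space $E\OL{C}$ is a subcoalgebra of $(B\OL{C},\Delta)$, so a coderivation of $B\OL{C}$ that preserves $E\OL{C}$ restricts to a coderivation of $E\OL{C}$ (the defining identity $\Delta\circ \WH{d}=(\WH{d}\otimes 1+1\otimes \WH{d})\circ\Delta$ restricts, using $\WH{d}(E\OL{C})\subseteq E\OL{C}$ and $\Delta(E\OL{C})\subseteq E\OL{C}\otimes E\OL{C}$), and $\WH{d}^2=0$ on $B\OL{C}$ restricts as well, yielding the induced codifferential. For the cyclic case I claim only a subcomplex: the Remark records that $B^{cyc}\OL{C}$ carries no induced coalgebra structure, so $\WH{d}$ descends merely to a map of complexes $B^{cyc}\OL{C}\to B^{cyc}\OL{C}$ with $\WH{d}^2=0$ inherited by restriction.

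The mechanism behind the preservation is a pairing of terms coming from different insertion positions and different group translates of a fixed tensor. Writing $\WH{m}_j=\sum_i R_i$ as the sum of its insertions of $m_j$ into the consecutive block beginning at slot $i$, I would show that applying an output symmetry $\rho$ (an adjacent transposition of the $n-j+1$ output slots in the symmetric case, the cyclic generator $\sigma_0'$ on $n-j+1$ letters in the cyclic case) to a term $R_i\big(\tau\cdot(x_1\otimes\cdots\otimes x_n)\big)$ produces, up to sign, a term $R_{i'}\big(\tau'\cdot(x_1\otimes\cdots\otimes x_n)\big)$ occurring in the same total sum, where $\tau'$ lifts $\rho$ to a permutation of the $n$ input slots that either permutes the inserted $j$-block as a unit or slides it past a neighbouring slot. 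Because the given element is fixed by all $\tau\in S_n$ (respectively by $\sigma_0$), these translates occur with the correct coefficients, and the assignment $(\tau,i)\mapsto(\tau',i')$ is a sign-preserving bijection of the indexing set. Summing, the output is fixed by $\rho$; as $\rho$ ranges over the adjacent transpositions generating $S_{n-j+1}$ (resp.\ generates the cyclic group), the output is fixed by the whole group, which is the claimed invariance.

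The genuine content, and the step I expect to be the main obstacle, is the Koszul sign bookkeeping that makes this pairing sign-coherent. One must verify that the reordering sign $\E(\tau,\vec{x})$ of \eqref{signperm} coming from invariance of the input, the insertion sign $(-1)^{|x_1|'+\cdots+|x_{i-1}|'}$ from \eqref{eq:hatd}, and the reordering sign of $\rho$ on the output combine to $+1$ under the bijection above. The key structural fact is that $m_j$ has degree $+1$ in the shifted grading, so the inserted entry satisfies $|m_j(x_i,\dots,x_{i+j-1})|'=|x_i|'+\cdots+|x_{i+j-1}|'+1$; thus in every Koszul computation the block $m_j(x_i,\dots,x_{i+j-1})$ behaves as a single letter whose shifted degree is the block sum plus one extra odd unit. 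This extra odd unit is precisely what matches the insertion sign against the transposition signs, so that moving the inserted block past a neighbouring slot on the output side reproduces exactly the sign of sliding the whole block on the input side.

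I would organize the verification by reducing $S_{n-j+1}$ to adjacent transpositions and checking two cases separately: $\rho$ leaving the inserted block intact, versus $\rho$ moving a slot across the block. The cyclic case is the single check for $\sigma_0'$, and a small model computation (all $|x_i|'$ odd, say $j=2$, $n=3$) already exhibits the pattern transparently: the term obtained by inserting $m_2$ at the first output slot in one rotation is carried by $\sigma_0'$, with the correct sign predicted by the rotation identity for $\sigma_0$ recorded above, onto the term obtained by inserting $m_2$ at the second output slot in another rotation. Checking that this local matching holds with the signs as stated, uniformly in $j$ and $n$, is the whole of the work; everything else is formal.
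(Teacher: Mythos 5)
Your proposal is correct and takes essentially the same route as the paper: the paper's proof consists of checking the claim on the symmetrized (resp.\ cyclically symmetrized) generators $\sum_{\sigma}\sigma(x_1\otimes\cdots\otimes x_k)$ and declaring the verification easy, which is precisely your pairing of insertion positions with group translates of a fixed tensor. The sign mechanism you isolate --- the insertion sign $(-1)^{|x_1|'+\cdots+|x_{i-1}|'}$ of \eqref{eq:hatd} absorbing the extra odd unit in $|m_j(x_i,\dots,x_{i+j-1})|'$ so that the inserted block slides like a single letter --- is exactly the bookkeeping the paper leaves implicit, so your write-up is a faithful (and more detailed) expansion of the paper's one-line proof.
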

\begin{proof}
It is easy check the claim for the generators $\sum_{\sigma} \sigma(x_1 \otimes \cdots \otimes x_k)$
where the summand is over $\sigma \in \ZZ/k\ZZ$ for the cyclic case or $\sigma \in S_k$ for the symmetric case.
\end{proof}

$\LI$-algebra structure on $\OL{C}$ is the codifferential $\WH{d}$ on $E\OL{C}$ or equivalently,
\begin{definition}\label{def:linf}
 An $\LI$-algebra $(\OL{C},\{l_*\})$ consists of a $\ZZ$-graded vector space $\OL{C}$ over $R$ with
a collection of multi linear maps  $l:= \{ l_n:E_n\OL{C} \to \OL{C}[1]\}_{n\geq 1}$ of
degree one
 satisfying the following equation for each $k=1,2,\cdots.$
\begin{equation}
0=\sum_{k_1+k_2=k+1} \sum_{\sigma \in (k_1,k_2)\textrm{shuffle}} (-1)^{\E(\sigma,\vec{x})} 
l_{k_1}\big( [l_{k_2}([x_{\sigma(1)},\cdots,x_{\sigma(k_2)}]),x_{\sigma(k_2+1)},\cdots,x_{\sigma(k)}] \big).
\end{equation}
\end{definition}
Note that for any $\AI$-algebra, there exists the underlying $\LI$-algebra obtained by the restriction to
fixed elements of symmetric group action.

\subsection{$\AI$ and $\LI$-modules}
\begin{definition} For graded vector spaces $\OL{C}_1,\OL{C}_0$ and $\OL{M}$ over $R$, one writes
\begin{equation}\label{def:modbar}
T^{\OL{M}}(\OL{C}_1,\OL{C}_0):=\bigoplus_{k\geq 0, l\geq 0} \OL{C}_1^{\otimes k} \otimes \OL{M}
\otimes \OL{C}_0^{\otimes l}.
\end{equation}
Furthermore, let
$$ \Delta^{\OL{M}}:T^{\OL{M}}(\OL{C}_1,\OL{C}_0)\longrightarrow (T\OL{C}_1\otimes  T^{\OL{M}}(\OL{C}_1,\OL{C}_0))\oplus
(T^{\OL{M}}(\OL{C}_1,\OL{C}_0)\otimes T\OL{C}_0), $$ be given by
$$\Delta^{\OL{M}}(v_{1}\otimes \cdots \otimes v_{k}  \otimes  \UL{w}  \otimes v_{k+1}\otimes \cdots \otimes v_{k+l}):=$$
$$ \sum_{i=1}^{k}(v_{1}\otimes \cdots \otimes v_{i}) \otimes(v_{i+1}\otimes \cdots \otimes \UL{w} \otimes \cdots \otimes v_{n})+
  \sum_{i=k}^{k+l-1} (v_{1}\otimes \cdots \otimes \UL{w} \otimes \cdots \otimes v_{i})
 \otimes(v_{i+1}\otimes \cdots \otimes v_{k+l}).$$
Here we underlined the element of the module $\OL{M}$ for convenience.
\end{definition}

Let $(\OL{C}_1,\WH{d})$ and $(\OL{C}_0,\WH{d} ')$ be $\AI$-algebras. An $\AI$-bimodule $\OL{M}$ over
$(\OL{C}_1,\OL{C}_0)$ is defined as a map $D^M$ defined as follows:
for simplicity, we first denote  
$$\OL{B}^M(C_1,C_0) = T^{\OL{M}[1]}(\OL{C}_1[1],\OL{C}_0[1]).$$
We consider a map $D^{M}:\OL{B}^M(C_1,C_0) \longrightarrow \OL{B}^M(C_1,C_0)$ be a map of degree one
satisfying the following commutative diagram:
\begin{equation}
\begin{diagram}
\node{\OL{B}^M(C_1,C_0)}\arrow{s,l}{D^{M}}\arrow{e,t}{\Delta^{M}}
  \node{(B\OL{C}_1\otimes \OL{B}^M(C_1,C_2))\oplus (\OL{B}^M(C_1,C_0)\otimes B\OL{C}_0)}\arrow{s,r}
  {(id\otimes D^{M}+\WH{d}\otimes id) \oplus (D^{M}\otimes id+id\otimes \WH{d}')} \\
\node{\OL{B}^M(C_1,C_0)}\arrow{e,b}{\Delta^{M}} \node{
(B\OL{C}_1\otimes \OL{B}^M(C_1,C_0))\oplus (\OL{B}^M(C_1,C_0)\otimes B\OL{C}_0) }
\end{diagram}
\end{equation}
One can show (see \cite{T}) that such $D^M$ is determined by the family of maps
$$\eta_{k_1,k_0}:T_{k_1}\OL{C}_1[1] \otimes \OL{M}[1] \otimes T_{k_0}\OL{C}_0[1] \to \OL{M}[1].$$ 
We say $\OL{M}$ has the structure of an $\AI$-bimodule over $(\OL{C}_1,\OL{C}_0)$ if 
$$D^{M} \circ D^{M} =0.$$

Now, the definitions of $\LI$-bimodule can be obtained by symmetrizing the above construction.
Namely, let $(\OL{C},\WH{d})$ be an $\AI$-algebra and 
consider an $\AI$-bimodule $\OL{M}$ over $(\OL{C},\OL{C})$.
We can consider a symmetric group action on $\OL{B}^M(C,C)$ defined by
$$\sigma \cdot (x_{1}\otimes \cdots \otimes x_{k}\otimes \underline{x_{k+1}}\otimes x_{k+2}\otimes \cdots \otimes x_{k+l+1})
= $$
$$ (-1)^{\E(\sigma,\vec{x})} 
(x_{\sigma(1)}\otimes \cdots \otimes  \underline{x_{\sigma(j)}}\otimes  \cdots \otimes v_{\sigma(k+l+1)}),
$$
where $\sigma(j)=k+1$. 
For example,  $\sigma_0 = (1,2,3) \in S_3$,
$$\sigma_0 \cdot ( x_1 \otimes \UL{x_2} \otimes x_3) = \pm \UL{x_2} \otimes x_3 \otimes x_1.$$

Let $\OL{E}^M(C)$ be the fixed elements of the symmetric group action on $\OL{B}^M(C,C)$.
We denote 
$$[\UL{x}_0,x_1,\cdots,x_k]:= \sum_{\tau \in S_{k+1}, j=\sigma^{-1}(0)} (-1)^{\E(\tau,\vec{x})} x_{\tau(1)}\otimes \cdots 
\otimes \UL{x}_{\sigma(j)} \otimes \cdots \otimes x_{\tau(k)}.$$
Note that there exist obvious one to one correspondence between $\OL{E}^M(C)$ and $\OL{M}[1] \otimes E_k\OL{C}$, and we will identify them for
convenience.

Let $\WT{\OL{C}}$ be an induced $\LI$-algebra from the $\AI$-algebra $\OL{C}$. 
We define $\LI$-bimodule $\OL{M}$ over an $\LI$-algebra $\WT{\OL{C}}$ as a map 
$$D^{M}:\OL{E}^M(C) \longrightarrow \OL{E}^M(C)$$ obtained by symmetrizing the above construction such that $D^{M} \circ D^{M} =0$. Or equivalently, 

\begin{definition}\label{def:lmod}
Let  $(\OL{C},\{l_*\})$  be an $\LI$-algebra. Then, the $\LI$-bimodule structure on a graded vector space$\OL{M}$
is given by a collection of maps  $\eta:= \{ \eta_k: \OL{M}[1]\otimes E_k\OL{C} \to \OL{C}[1] \}_{k\geq 0}$ of
degree one  satisfying the following equation for each $k=0,1,2,\cdots$
\begin{equation}
\sum_{k_1+k_2=k} \sum_{\sigma \in (k_1,k_2)\textrm{shuffle}}(-1)^{\E(\sigma,\vec{x}_{\geq 1})}  \big( 
\eta_{k_1} ([ \eta_{k_2}[ \UL{x}_0, x_{\sigma(1)},\cdots,x_{\sigma(k_2)}] ,x_{\sigma(k_2+1)},\cdots,x_{\sigma(k)}]) 
\end{equation}
$$+ \eta_{k_1} ([ \UL{x}_0, l_{k_2}[x_{\sigma(1)},\cdots,x_{\sigma(k_2)}] ,x_{\sigma(k_2+1)},\cdots,x_{\sigma(k)}])
\big) =0.$$
\end{definition}

In fact, the notions such as $\AI$-homomorphism, $\AI$-bimodule map, $\AI$-homotopy induces those
of $\LI$-homomorphism, $\LI$-bimodule map, $\LI$-homotopy without much difficulty via the process of
symmetrization. These notions will be explained more in the setting of filtered $\AI$-algebras now.

\subsection{Filtered $\AI$-algebras}
We fix the ring $R=\RR$, and we use the following Novikov rings as  coefficients. 
($T$ and $e$ are formal parameters)
$$\NOV = \{ \sum_{i=0}^\infty a_i T^{\lambda_i}e^{q_i} | \; a_i \in R,\;\lambda_i \in \RR,\; q_i \in \ZZ, \; \lim_{i \to \infty} \lambda_i = \infty \}$$
$$\NOVO = \{ \sum_i a_i T^{\lambda_i}e^{q_i} \in \NOV | \lambda_i \geq 0 \},\;\;
\NOVO^+ = \{ \sum_i a_i T^{\lambda_i}e^{q_i} \in \NOVO | \lambda_i > 0 \},$$
We define a valuation $\tau:\NOV \to \RR$ which is the minimum energy of the expression:
\begin{equation}\label{def:tau}
\tau(\sum_i a_i T^{\lambda_i}e^{q_i}) = Min ( \{ q_i|\forall i\; \}).
\end{equation}
We also define the energy filtration as
$$ F^{\lambda} \NOVO =  \{ x \in \NOVO | \tau(x) \geq \lambda \}.$$
We remark that in \cite{FOOO}, they work with $\NOVO$ coefficient to define these $\AI$-objects and as $\NOV$ is flat over $\NOVO$, it does not cause any trouble. But one should be careful since the Floer cohomology of a pair of Lagrangian submanifolds over $\NOV$ is  invariant under
the Hamiltonian isotopy, but not over $\NOVO$. This is because the related maps are
weakly filtered $\AI$-bimodule maps which will be explained later in this section.
We remark that unfiltered $\AI$-algebras as in the previous subsections are denoted as $\OL{C}$ and their $\AI$-maps as $\OL{m}_i$'s.

We would like to work on Novikov field coefficients to the following cases: first, to find the reduced Hochschild homology. Second, when we try to construct the Conne-Tsygan's $B$ operator  and finally
when we  make dualizations to obtain $DGA$'s. But note that $\NOV$ is not a field due to the formal parameter $e$.
For example, $(1+e)$ is not an invertible element. To overcome this, we may take one of the following two approaches.
First, one define universal Novikov ring without $e$:
\begin{equation}\label{def:lambda}
\Lambda =\{ \sum_{i=0}^\infty a_i T^{\lambda_i} | \; a_i \in R,\;\lambda_i \in \RR,\; \; \lim_{i \to \infty} \lambda_i = \infty \}
\end{equation}
In this case, $\Lambda$ is a field, but as one lose the track of the index, one should work with $\ZZ/2$-graded complexes instead of $\ZZ$ graded ones except for the cases of vanishing Maslov index. Here there exists at least $\ZZ/2$-grading as the Maslov index of a holomorphic disc with Lagrangian boundary conditions is always even for an orientable Lagrangian submanifold.
(We learned this approach from Fukaya and this will be used in their upcoming work on toric manifolds.)

On the other hand, to keep $e$ alive, we can also take the following approach. We consider a field of rational functions $R(e)$ of the variable $e$, and
consider the tensor product
\begin{equation}\label{def:nove}
\NOV^{(e)} = \NOV \otimes_{R[e,e^{-1}]} R(e).
\end{equation}
Then, we obtain a field $\NOV^{(e)}$ and as it is obtained via tensoring the field $R(e)$, it does not affect the homology theories very much.
We remark that in most of the construction of \cite{FOOO}, they work with $\NOVO$ and only when one needs to work with $\NOV$, they take tensor product  $\otimes \NOV$ to work with
$\NOV$ coefficients. We will take a similar approach when using the field  $\NOV^{(e)}$.

Let $C$ be a free graded $\NOVO$-module.
We define similarly $T_k(C[1]), B_k(C)$ for $k\geq 1$ as in (\ref{def:tk}), and
set $T_0(C[1])= B_0(C)= \NOVO$. 
A filtered $\AI$-algebra structure on $C$ is defined as in the Definition \ref{def:ai}
by a family of maps
$$m_k : B_k(C) \to C[1], \ \ \textrm{for} \ k =0,1,\cdots,$$
satisfying the $\AI$-equations (\ref{aiformula}).

The module $C$ also has a filtration from the filtration of $\NOVO$.
The filtration on $B_k(C)$ is defined as
$$F^\lambda B_k(C) = \cup_{\lambda_1+ \cdots +\lambda_k \geq \lambda}
\big( F^{\lambda_1} C \otimes \cdots \otimes F^{\lambda_k} C \big).$$
Define$$BC = \oplus_{k=0}^\infty B_k(C),$$ and $\HH{B}C$ be its completion
with respect to energy filtration.

Then, one can define $\HH{m}_k$ as in (\ref{eq:hatd}), and note that when $k=0$, $\HH{m}_0$ is defined as
$$\HH{m}_0(x_1 \otimes \cdots \otimes x_n) = \sum_{i=1}^{n-k+1}
(-1)^{|x_1|'+\cdots+|x_{i-1}|' } x_1 \otimes \cdots \otimes
x_{i-1} \otimes m_0(1) \otimes \cdots \otimes x_n.$$
Then by setting $\HH{d} = \sum_{k=0}^\infty \HH{m}_k$,
the $\AI$-equations are equivalent to the equality
$\HH{d} \circ \HH{d} =0$.
The complex $(\HH{B}C,\HH{d})$ is called the {\em bar complex} of an $\AI$-algebra $A$.

The above equality give rise to countably many relations among $\{ m_k \}$ where
the first two are given as
\begin{equation}\label{dd}
\begin{cases}
 m_1(m_0(1)) =0.\\
m_2(m_0(1),x) + (-1)^{deg \ x +1}m_2(x,m_0(1)) + m_1(m_1(x)) =0.
\end{cases}
\end{equation}
If $m_0 =0$, we have $m_1^2 =0$, hence it defines the homology of the $\AI$-algebra.
In general $m_0$ does not vanish, hence $m_1$ is not necessarily a differential.
The obstruction and deformation theory when $m_0 \neq 0$ was developed in \cite{FOOO},
and in an unobstructed case, one can define (deformed) Floer cohomology. See section 7 for
more discussion on unobstructedness.

An element $I \in C^0 = C^{-1}[1]$ is called a unit if
\begin{equation}\label{unit}
\begin{cases}
 m_{k+1}(x_1,\cdots,I,\cdots,x_k) = 0 \;\; \textrm{for} \; k\geq 2 \;\;\textrm{or}\;\; k =0 \;\; \\
m_2(I,x) = (-1)^{deg \, x} m_2(x,I) = x.
\end{cases}
\end{equation}

For filtered $\AI$-algebras, we assume the maps $\{m_k \}$ satisfy
\begin{equation}\label{menergy}
\begin{cases}
m_k \big( F^{\lambda_1}C^{m_1} \oplus \cdots \oplus F^{\lambda_k}C^{m_k}\big) \subset F^{\lambda_1 + \cdots + \lambda_k}C^{m_1 + \cdots + m_k -k +2} \\
m_0(1) \in F^{\lambda'}C[1] \,\;\; \textrm{for some}\; \lambda' >0.
\end{cases}
\end{equation}
We remark that $$\NOVO / \NOVO^+ \cong R[e,e^-].$$
For a given filtered $\AI$-algebra, $(C,\{m_k\})$, by considering modulo
 $\NOVO^+$, we obtain
$$\OL{m}_k : B_k (\OL{C}) \otimes_R R[e,e^-] \to
C[1] \otimes_R R[e,e^-].$$
We assume that all the $\OL{m}_k$ maps in fact are induced from
$$\OL{m}_k: B_k (\OL{C}) \to \OL{C}[1].$$
We make similar assumptions for all unfiltered $\AI$-homomorphisms,
unfiltered $\AI$-bimodules and their homomorphisms in this paper as in \cite{FOOO}.

\begin{remark}
We clarify our notation of $\AI$-algebra. What we call $\AI$-algebra
here is called in some literature {\it weak} $\AI$-algebra which may have
a non-trivial $m_0$ term. A case without $m_0$ term is called a strict
$\AI$-algebra. 
\end{remark}

Considering the cyclic or symmetric group action, one can repeat the construction
of the previous subsection for filtered $\AI$-algebras.
\begin{definition}
Let $B^{cyc}_kC$ be the set of fixed elements of the cyclic group action on $B_kC$, and denote 
$$ \HH{B}^{cyc}C = \HH{\oplus}_{k=0}^\infty B^{cyc}_kC, \;\;\HH{B}^{cyc}_{\geq 1}C = \HH{\oplus}_{k=1}^\infty B^{cyc}_kC$$
We define $E_kC)$ to be the submodule of $B_k(C)$ consisting of
{\it fixed} elements of symmetric group action on $B_k(C)$ and
let $$\HH{E}C = \HH{\oplus}_{k=0}^\infty E_k(C), \;\;\HH{E}_{\geq 1}C = \HH{\oplus}_{k=1}^\infty E_k(C)$$
\end{definition}

We recall the notion of filtered $\AI$-homomorphism between two filtered $\AI$-algebras.
The family of maps of degree 0
$$f_k : B_k(C_1) \to C_2[1] \;\;\textrm{for} \; k =0,1,\cdots $$
induce the coalgebra map
$\HH{f}:\HH{B}C_1 \to \HH{B}C_2$, which for $x_1 \otimes  \cdots \otimes x_k \in B_k C_1$ is defined
by the formula
$$\HH{f}(x_1\otimes \cdots \otimes x_k) = \sum_{0 \leq k_1 \leq \cdots \leq k_n \leq k}
f_{k_1}(x_1,\cdots,x_{k_1})\otimes \cdots \otimes f_{k-k_n }(x_{k_n+1},\cdots,x_{k}).$$
We remark that the above can be an infinite sum due to the possible existence of $f_0(1)$.
In particular, $\HH{f}(1) = e^{f_0(1)}$. It is assumed that
\begin{equation}\label{fenergy}
\begin{cases}
f_k(F^\lambda B_k(C_1)) \subset F^\lambda C_2[1], \;\;\textrm{and}\\
f_0(1) \in F^{\lambda'}C_2[1] \,\;\; \textrm{for some}\; \lambda' >0. 
\end{cases}
\end{equation}
The map $\HH{f}$ is called a filtered $\AI$-homomorphism if  
$$\HH{d} \circ \HH{f} = \HH{f} \circ \HH{d}.$$
It is easy to check the following, whose proof is left as an exercise.
\begin{lemma}\label{fe}
For any filtered $\AI$-homomorphism $f:C_1 \to C_2$, 
the map $\HH{f}$ descends to the chain maps
$$\HH{f}: \HH{B}^{cyc}C_1 \to \HH{B}^{cyc}C_2, \;\;\; \HH{f}:\HH{E}C_1 \to \HH{E}C_2$$
\end{lemma}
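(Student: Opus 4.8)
The plan is to separate the statement into two independent assertions: that $\HH{f}$ is a chain map on each subcomplex, and that it actually maps the invariant subcomplex of $C_1$ into that of $C_2$. The first assertion is free. Since $\HH{f}$ is a filtered $\AI$-homomorphism we already have $\HH{d}\circ \HH{f} = \HH{f}\circ \HH{d}$ on all of $\HH{B}C_1$, and the preceding lemma tells us that $\HH{d}$ preserves $\HH{B}^{cyc}C$ and $\HH{E}C$; hence once the inclusions are known, restricting the identity $\HH{d}\HH{f}=\HH{f}\HH{d}$ to the subcomplexes gives the chain-map property at no extra cost. So the whole content is the invariance $\HH{f}(\HH{B}^{cyc}C_1)\subset \HH{B}^{cyc}C_2$ and $\HH{f}(\HH{E}C_1)\subset \HH{E}C_2$. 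Because $\HH{f}$ preserves the energy filtration by (\ref{fenergy}) and is therefore continuous, and because cyclic (resp. symmetric) invariance is a closed condition, it suffices to verify the inclusions on the generators $\sum_{j\in \ZZ/k\ZZ}\sigma_0^j\cdot(x_1\otimes\cdots\otimes x_k)$ and $[x_1,\cdots,x_k]$, which span dense subspaces of $\HH{B}^{cyc}C_1$ and $\HH{E}C_1$.

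For the symmetric case I would apply $\HH{f}$ to $[x_1,\cdots,x_k]$ and re-sum the result by output word length. Recall that $\HH{f}$ acts by cutting a word into consecutive blocks and applying some $f_a$ to each block (a block of size $0$ contributing a factor $f_0(1)$); since $f_a$ has degree $0$ as a map $B_a(C_1)\to C_2[1]$, each output letter $y=f_a(x_{i_1},\cdots,x_{i_a})$ has shifted degree $|x_{i_1}|'+\cdots+|x_{i_a}|'$. Now $\HH{f}([x_1,\cdots,x_k])$ is a sum over all $\tau\in S_k$ and over all ways of cutting into consecutive blocks, and the crucial observation is that reordering the blocks themselves is already realized inside this double sum: blocks of equal size are exchanged by some $\tau\in S_k$, while blocks of differing sizes are supplied in the opposite order by a different cutting. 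Thus, for a fixed unordered collection of output letters $y_1,\cdots,y_n$, the subsum over their orderings reassembles precisely into the symmetric generator $[y_1,\cdots,y_n]$ in $C_2$, so $\HH{f}([x_1,\cdots,x_k])\in \HH{E}C_2$. The sign is exactly where one must be careful: transposing two adjacent input blocks moves all elements of one past all of the other, producing $(-1)^{(\sum|x|')(\sum|x|')}=(-1)^{|y_i|'|y_j|'}$, which is the Koszul sign that $\E$ demands for the corresponding transposition of output letters, whereas permutations internal to a block yield distinct output letters and are simply summed over.

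The cyclic case runs on the same mechanism with $S_k$ replaced by $\ZZ/k\ZZ$ and full symmetrization replaced by cyclic symmetrization. The point to appreciate is that $\HH{f}$ cuts a linear word into consecutive, non-wrapping blocks, so it is not cyclically natural on a single word; the rotations $\sigma_0^j$ in a cyclic generator supply exactly the missing wrap-around cuts, so that the images of the rotated words close up under the output cyclic action. This is already visible for $k=3$, where $f_1(x_1)\otimes f_2(x_2,x_3)$ and $f_2(x_2,x_3)\otimes f_1(x_1)$ come from two different rotations and are exchanged by $\sigma_0\in S_2$; collecting terms with the same cyclic class of output letters then produces cyclic generators in $C_2$. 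I expect the genuine labor, and the only real obstacle, to be the sign bookkeeping: one must check that $\E(\tau,\vec{x})$ factors as the signs internal to each block times the sign of the induced permutation of the output letters $y_i$ (using that $|y_i|'$ is the sum of the $|x|'$ over the $i$-th block), and, cyclically, that this factorization is compatible with the $\ZZ/k\ZZ$-action and with the usual subtlety of periodic words. Over the characteristic-zero coefficient ring $\NOVO$ the latter causes no difficulty, since fixed subspaces coincide with images of the averaging projectors; insertions of $f_0(1)$ are handled identically, being symmetrized or cyclized into the output like any other letter.
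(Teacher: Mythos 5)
Your proposal is correct and takes essentially the approach the paper intends: the paper leaves this lemma as an exercise, and its proof of the analogous unfiltered statement (that $\WH{d}$ descends to $B^{cyc}\OL{C}$ and $E\OL{C}$) is exactly the check on cyclic/symmetric generators that you carry out, with your block-reassembly and Koszul-sign factorization supplying the details left unwritten. The reduction of the chain-map property to the inclusion of images, the density/continuity argument for the completions, and the treatment of $f_0(1)$ insertions are all sound.
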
 
In particular the latter provides the notion of a filtered $\LI$-homomorphism between
filtered $\LI$-algebras.

\subsection{Filtered bimodules}
The notion of a filtered $\AI$ or $\LI$-bimodule can be easily defined as in unfiltered case.
Let $M$ be a graded free filtered $\NOVO$-module and denote by $F^{\lambda}M$ its
filtration. We complete $M$ with respect to this filtration.
Let $(C_1,\{m^1_k\}), (C_0,\{m^0_k\})$ be filtered $\AI$-algebras over $\NOVO$.

A family of operations for $k_1,k_0 \in \ZZ_{\geq 0}$
$$n_{k_1,k_0}:B_{k_1}(C_1) \otimes M[1] \otimes B_{k_0} (C_0) \to M[1]$$
of degree one define an $\AI$-bimodule structure on $M$ if they
satisfy the equation (\ref{ddmod}). We
also assume that $n_{*,*}$ preserves the filtration in an obvious way.

These operations can be extended to
\begin{equation}\label{bid}
\HH{d}:\HH{B}(C_1) \HH{\otimes} M[1] \HH{\otimes} \HH{B}(C_0) \to
\HH{B}(C_1]) \HH{\otimes} M[1] \HH{\otimes} \HH{B}(C_0).
\end{equation}
defined by (codifferentials of $\HH{B}C_i$ are denoted as $\HH{d}^i$)
$$ \HH{d}(x_1 \otimes \cdots \otimes x_k \otimes y \otimes z_1 \otimes \cdots \otimes z_l)
 = \HH{d}^1 ( x_1 \otimes \cdots \otimes x_k )\otimes y \otimes z_1 \otimes \cdots \otimes z_l$$
 $$ +  \sum_{p \leq k, q \leq l}
(-1)^{|x_1|' + \cdots + |x_{k-p}|'} x_1 \otimes \cdots \otimes x_{k-p} \otimes n_{p,q}(x_{k-p+1} \otimes \cdots \otimes y \otimes \cdots \otimes z_q) \otimes \cdots  \otimes z_l
$$
\begin{equation}\label{bimodulebar}
+ (-1)^{\sum(|x_i|' + |y|' )} x_1 \otimes \cdots \otimes x_k \otimes y \otimes
\HH{d}^0 (z_1 \otimes \cdots \otimes z_l).
\end{equation}
The family of maps $\{n_{k_1,k_0}\}_{k_1,k_0 \in \ZZ_{\geq 0}}$ defines an $\AI$-bimodule if 
\begin{equation}\label{ddmod}
\HH{d}\circ \HH{d}=0.
\end{equation}
One can rewrite the above equation into countably many equations involving $n_{k_1,k_0}$'s. The first equation
is
\begin{equation}\label{bimoduleeq}
n_{0,0} \circ n_{0,0} ( a ) +  n_{1,0}(m_0^{1}(1),a) + (-1)^{|a|'} n_{0,1}(a,m_{0}^{0}(1)) =0.
\end{equation}
In an unfiltered $\AI$-bimodule case, the equation (\ref{bimoduleeq}) becomes
$\OL{n}_{0,0} \circ \OL{n}_{0,0}=0$.

Now we recall the notion of an $\AI$-bimodule homomorphism.
Let $C_i$, $C_i'$ be filtered $\AI$-algebras ($i=0,1$). Let
$M$ and $M'$ be $(C_1,C_0)$ and $(C_1',C_0')$ filtered $\AI$-bimodules
respectively. Let $f^i: C_i \to C_i'$ be filtered $\AI$-algebra homomorphisms.
Then, a filtered $\AI$-bimodule homomorphism $\phi:M \to M'$ over $(f^1,f^0)$
is a family of $\NOVO$-module homomorphisms $\{\phi_{k_1,k_0}\}$

$$\phi_{k_1,k_0}:B_{k_1}(C_1) \HH{\otimes}_{\NOVO} M[1] \HH{\otimes}_{\NOVO} B_{k_0} (C_0) \to M'[1]$$
which respects the filtration in an obvious way, and satisfies
\begin{equation}
\HH{\phi} \circ \HH{d} = \HH{d}' \circ \HH{\phi}.
\end{equation}
Here $\HH{\phi}:B(C_1) \HH{\otimes} M[1] \HH{\otimes}
 B(C_0) \to
B(C_1') \HH{\otimes} M'[1] \HH{\otimes}
 B(C_0')$ is defined by
$$\HH{\phi}(x_1 \otimes \cdots \otimes x_k \otimes y \otimes z_1 \otimes \cdots \otimes z_l)
 = \sum_{p \leq k, q \leq l}
\HH{f}^1(x_1 \otimes \cdots \otimes x_{1,k-p}) \otimes $$
$$n_{p,q}(x_{k-p+1} \otimes \cdots \otimes y \otimes \cdots \otimes z_q)
\otimes \HH{f}^0(z_{q+1} \otimes \cdots  \otimes \cdots z_l).$$

In the case that $C_1$ and $C_0$ (resp. $C_1'$ and $C_0'$) are the same $\AI$-algebra, we denote $\WT{C}$(resp. $\WT{C}'$) the induced 
$\LI$-algebras from $C_i$ (resp. $C_i'$) for $i=1$ or $2$.
By taking the fixed elements of symmetric group action in the above construction, one can define a notion of 
filtered $\LI$-morphism $\phi : M \to M'$ over $f$.

Now, we recall the notion of a pullback of an $\AI$-bimodule.(See \cite{FOOO} Lemma 26.7 - 9.)
Let $(M,n )$ be a filtered $(C_1',C_0')$ $\AI$-bimodule, and let
$f^i:C_i \to C_i'$ (i=0,1) be filtered $\AI$-homomorphisms.
Then $(M,n )$ give rise to a $(C_1,C_0)$ $\AI$-bimodule
$\big( (f^1,f^0)^*M, (f^1,f^0)^*n \big)$ with
$$(f^1,f^0)^*n (\vec{x},y,\vec{z}) = n \big( \widehat{f}^1(\vec{x}),y,\widehat{f}^0(\vec{z}) \big)$$

The pull-back operation is also functorial.
Namely, let $g^i: C_i' \to C_i''$ be filtered $\AI$-homomorphisms
and $M'$ a filtered   $(C_1'',C_0'')$ $\AI$-bimodule.
Then, A filtered $\AI$-bimodule homomorphism $\phi : M \to M'$ over $(g^1,g^0)$
induces a filtered $\AI$-bimodule homomorphism over the identity
$$(f^1,f^0)^*\phi : (f^1,f^0)^*M \to (g^1 \circ f^1,g^0 \circ f^0)^*M',$$
where
$$(f^1,f^0)^*\phi (\vec{x},y,\vec{z}) = \phi \big( \widehat{f}^1(\vec{x}),y,\widehat{f}^0(\vec{z}) \big).$$
For the case $(f^1,f^0) = (id,id)$, it states that
an $\AI$-bimodule homomorphism $\phi:M \to M'$ over $(g^1,g^0)$ can be considered as
an $\AI$-bimodule homomorphism $\widetilde{\phi} : M \to (g^1,g^0)^*M'$ over $(id,id)$.

\subsection{Gapped condition and spectral sequences}
We recall the gapped condition and the spectral sequence arising
from the related energy filtration. Let $G$ be a submonoid of $\RR_{\geq 0} \times 2\ZZ$ satisfying the
following conditions
\begin{enumerate}
\item Let $\pi:\RR_{\geq 0} \times 2\ZZ \to \RR_{\geq 0}$ be the
projection to the first component. Then $\pi(G) \subset \RR_{\geq 0}$ is
discrete.
\item $G \cap (\{ 0 \} \times 2\ZZ) = \{(0,0)\}$
\item $G \cap (\{\lambda\} \times 2\ZZ)$ is finite set for any $\lambda$.
\end{enumerate}
We may denote its components as $\lambda,\mu$ :  For $\beta \in G$, 
$$\beta = (\lambda(\beta),\mu(\beta)) \in \RR_{\geq 0} \times 2\ZZ. $$
A filtered $\AI$-algebra is called to be {\it G-gapped }, if there
exist $R$-module homomorphisms $m_{k,\beta}:B_k \OL{C}[1] \to \OL{C}[1]$
for $k=0,1,2,\cdots$ and $\beta \in G$ such that
$$m_k = \sum_{\beta \in G} T^{\lambda(\beta)}e^{\mu(\beta)} m_{k,\beta}.$$
A filtered $\AI$-algebra $(C,m)$ is said to be gapped if it is $G$-gapped for some G.
Similarly one can define gapped $\AI$-homomorphisms.
We remark that the $\AI$-algebra of Lagrangian submanifolds
constructed in \cite{FOOO} is gapped due to Gromov Compactness theorem. Here 
$G$ is defined to be the submonoid of $\RR_{\geq 0} \times 2\ZZ$ generated by
$$G(L)_{0}=\{(\omega(\beta),\mu_L(\beta))|\beta \in \pi_2(M,L), \CM(L,\beta,J) \neq 0\},$$
If an $\AI$-algebra is gapped, then for any $k \geq 0$,
$$m_k ({\bf x}) - \OL{m}_k ({\bf x}) \in F^{\lambda_0} C
\;\;\textrm{for some}\;\; \lambda_0>0 $$
since $0$ is discrete in $\pi(G)$.

One can define in a similar way, gapped filtered $\AI$-bimodules,
gapped filtered $\AI$-bimodule homomorphisms. 
For weakly filtered $\AI$-bimodule homomorphism, \cite{FOOO} introduces a notion of a $G-set$, $G'$
and $G'$-gapped weakly filtered $\AI$-bimodule homomorphism. This is analogous to the above definition
but to allow energy loss up to a fixed amount. We refer readers to \cite{FOOO} Definition 21.3 for
details.
 
Let $(C,\delta)$ be a chain complex over $\NOVO$, which is gapped.
A new energy filtration is introduced by setting
$ \CF^n C = F^{n\lambda_0} C$ for each $n \in \ZZ_{\geq 0}$.
This filtration give rise to the spectral sequence. This is a
spectral sequence of a filtration over a filtered ring,
and we recall it here from \cite{FOOO}.
We put
\begin{equation}
\begin{cases}
Z_r^{p,q}(C) =\{ x \in \CF^q C^p| \delta(x) \in \CF^{q+r-1}C^{p+1} \}
+ \CF^{q+1}C^p, \\
B_r^{p,q}(C) =\big( \delta( \CF^{q-r+2}C^{p-1}) \cap \CF^q C^p \big)
+ \CF^{q+1}C^p, \\
\CE_r^{p,q}(C) = \frac{ Z_r^{p,q}(C)}{B_r^{p,q}(C)}.
\end{cases}
\end{equation}
We denote $\Lambda^{(0)}_{0,nov}$ to be the degree zero part
of $\NOVO$. We define a filtration on $\NOVO^{(0)}$ by
$\CF^n \NOVO^{(0)} = F^{n\lambda_0}
\NOVO^{(0)}$.
We denote
$\Lambda^{(0)}(\lambda) = \NOVO^{(0)}/F^{\lambda}\NOVO^{(0)}.$
Then the associated graded module is given by
$gr_*(\CF \NOVO^{(0)}) = \oplus_{n \in \ZZ_{\geq 0}} gr_n(\CF
\NOVO^{(0)}),$
where each $gr_n(\CF \NOVO^{(0)})$ is naturally isomorphic to
$\Lambda^{(0)}(\lambda)$.
Each $\CE_r^{p,q}$ has a structure of $\Lambda^{(0)}(\lambda)$-module.
\begin{lemma}[\cite{FOOO}Lemma26.20]
There exists $d_r^{p,q}: \CE_r^{p,q} \to \CE_{r}^{p+1,q+r-1}$, which  is a
$\Lambda^{(0)}(\lambda)$-module homomorphism such that
\begin{enumerate}
\item $\delta_r^{p+1,q+r-1} \circ \delta_r^{p,q} = 0$.
\item $Ker(\delta_r^{p,q})/Im(\delta_r^{p-1,q-r+1}) \cong \CE_{r+1}^{p,q}(C).$
\item $e^{\pm 1} \circ \delta_r^{p,q} = \delta_r^{p\pm 2, q} \circ e^{\pm 1}.$
\end{enumerate}
\end{lemma}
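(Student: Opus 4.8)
The plan is to follow the standard construction of the spectral sequence of a filtered differential module, adapted to the setting where the coefficient ring is $\NOVO$ and $\delta$ is $\NOVO$-linear. First I would define the differential on representatives: a class in $\CE_r^{p,q}$ is represented by some $x \in \CF^q C^p$ with $\delta x \in \CF^{q+r-1}C^{p+1}$, and I set $d_r^{p,q}[x] := [\delta x]$, the class of $\delta x$ in $\CE_r^{p+1,q+r-1}$. To see this lands in the correct term, note that $\delta(\delta x)=0$ lies in every filtration level, so $\delta x$ automatically satisfies the defining condition of $Z_r^{p+1,q+r-1}$, i.e.\ it is an $r$-cycle.

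The next step is independence of the chosen representative. If $x$ is altered within $B_r^{p,q}=\big(\delta(\CF^{q-r+2}C^{p-1})\cap \CF^q C^p\big)+\CF^{q+1}C^p$, then the $\delta$-exact part is annihilated by $\delta^2=0$, so $\delta x$ changes only by $\delta b$ for some $b\in \CF^{q+1}C^p$. Since both the old and new values of $\delta x$ lie in $\CF^{q+r-1}C^{p+1}$, this correction lies in $\delta(\CF^{q+1}C^p)\cap \CF^{q+r-1}C^{p+1}\subset B_r^{p+1,q+r-1}$, hence $[\delta x]$ is unchanged. This makes $d_r^{p,q}$ a well-defined $\Lambda^{(0)}(\lambda)$-module homomorphism, and property (1), $d_r\circ d_r=0$, is then immediate from $\delta^2=0$, as $d_r d_r[x]=[\delta\delta x]=0$.

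The main work is property (2), the identification $Ker(d_r^{p,q})/Im(d_r^{p-1,q-r+1})\cong \CE_{r+1}^{p,q}$, and I expect the filtration bookkeeping here to be the principal obstacle. I would unwind both sides explicitly. A class $[x]\in\CE_r^{p,q}$ is a $d_r$-cycle exactly when $\delta x\in B_r^{p+1,q+r-1}$, that is $\delta x=\delta b'+c$ with $b'\in \CF^{q+1}C^p$ and $c\in \CF^{q+r}C^{p+1}$; replacing $x$ by $x-b'$ (legitimate since $b'\in B_r^{p,q}$) yields a representative with $\delta(x-b')\in \CF^{q+r}C^{p+1}$, which is precisely the defining condition of $Z_{r+1}^{p,q}$. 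Thus the $d_r$-cycles are $Z_{r+1}^{p,q}/B_r^{p,q}$. Dually, the image of $d_r^{p-1,q-r+1}$ consists of the classes $[\delta y]$ with $y\in Z_r^{p-1,q-r+1}$, and these exhaust $\big(\delta(\CF^{q-r+1}C^{p-1})\cap \CF^q C^p\big)$ modulo $B_r^{p,q}$, which is exactly $B_{r+1}^{p,q}/B_r^{p,q}$. Taking the quotient gives $Z_{r+1}^{p,q}/B_{r+1}^{p,q}=\CE_{r+1}^{p,q}$, the claimed isomorphism.

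Finally, property (3) is a formal consequence of $\NOVO$-linearity. Multiplication by $e^{\pm 1}$ preserves the energy filtration $\CF$ while shifting the cohomological degree by $\pm 2$ (as $e$ carries degree $2$), so it maps $Z_r^{p,q}$ and $B_r^{p,q}$ into $Z_r^{p\pm2,q}$ and $B_r^{p\pm2,q}$ and descends to $\CE_r^{p,q}\to \CE_r^{p\pm2,q}$. Since $\delta(e^{\pm1}x)=e^{\pm1}\delta x$, evaluating on representatives gives $e^{\pm1}\circ d_r^{p,q}=d_r^{p\pm2,q}\circ e^{\pm1}$, which then holds on $\CE_r$. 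As the module structure and this equivariance are purely algebraic, the only genuinely delicate point remains the index-chase establishing property (2).
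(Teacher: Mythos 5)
Your proof is correct, and it is the standard cycle-representative construction for the spectral sequence of a filtered complex; the paper itself offers no proof of this lemma (it is imported verbatim from [FOOO, Lemma 26.20]), and your argument — choosing representatives with $\delta x \in \CF^{q+r-1}C^{p+1}$, checking well-definedness via the index identity $(q+r-1)-r+2 = q+1$, identifying kernel and image with $Z_{r+1}^{p,q}/B_r^{p,q}$ and $B_{r+1}^{p,q}/B_r^{p,q}$, and deducing (3) from $\NOVO$-linearity of $\delta$ and the fact that $e$ has degree $2$ and zero energy — is exactly the intended one.
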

The convergence of this spectral sequence is a non-trivial question
since the filtration is not bounded and $\delta_r$ does not vanish for large $r$ in general.
In the case of the Floer cohomology (with respect to $m_1$), convergence of
the spectral sequence was proved in \cite{FOOO}.

\subsection{$\AI$ (and $\LI$)-homotopies}
We recall the notions of $\AI$-homotopies between two $\AI$-homomorphisms and
between two $\AI$-bimodule homomorphisms.
In \cite{FOOO}, it is defined using the notion of a model of
$[0,1] \times C$.

A filtered $\AI$-algebra $\FC$ together with filtered $\AI$-homomorphisms
$$ Incl: C \to \FC,\;\; Eval_{s=0}: \FC \to C, \;\; Eval_{s=1}:\FC \to C$$
is said to be a {\it model} of $[0,1] \times C$ if the following holds.
\begin{enumerate}
\item $Incl_k : B_k C \to \FC$ is zero unless $k=1$. The same holds for
$Eval_{s=0}$ and $Eval_{s=1}$.
\item $Eval_{s=0} \circ Incl =  Eval_{s=1} \circ Incl = identity$
\item $Incl_1$ induces a cochain homotopy equivalence of the complex
$(\OL{C},\OL{m}) \to (\OL{\FC},\OL{m})$, and $(Eval_{s=0})_1$,$(Eval_{s=1})_1$
induce cochain homotopy equivalences of the complex
$(\OL{\FC},\OL{m}) \to (\OL{C},\OL{m})$.
\item The homomorphism  $(Eval_{s=0})_1 \oplus (Eval_{s=1})_1: \FC \to C\oplus C$
is surjective.
\end{enumerate}

Let $C_1,C_2$ be filtered $\AI$-algebras and $f,g:C_1 \to C_2$ filtered $\AI$-homomorphisms between them.
Then $f$ is said to be homotopic to $g$ if there exists a filtered $\AI$-homomorphism
$\FF:C_1 \to \FC_2$ such that $Eval_{s=0} \circ \FF = f, \;\; Eval_{s=1} \circ \FF = g.$
\begin{equation}
\xymatrix{ \hspace{4cm}& C_2 & \hspace{1cm} \\
C_1  \ar[ur]^f \ar[r]^{\CF} \ar[dr]^g & \FC_2 \ar[u]_{Eval_0} \ar[d]^{Eval_1} & \hspace{1cm}\\
\hspace{4cm}& C_2 & \hspace{1cm}}
\end{equation}
Here $\FC_2$ is a model of $[0,1] \times C_2$, and the above definition is independent of the
choice of a model.

Now, we recall the model for $\AI$-bimodules.
Let $M$ be a filtered $(C_1,C_0)$ $\AI$-bimodule and $\FC_i$ a model
of $[0,1] \times C_i$. A model of $[0,1] \times M$ is a filtered
$(\FC_1,\FC_0) \; \AI$-bimodule $\FM$ equipped with
$\AI$-bimodule homomorphisms $Eval_{s=s_0}:\FM \to M$ over
$Eval_{s=s_0}:\FC_i \to C_i$ (for $s_0=0,1$), and $Incl:M \to \FM$ over $Incl:C_i \to \FC_i$ with
the following properties.
\begin{enumerate}
\item $Eval_{s=s_0} \circ Incl$ is equal to the identity
\item $(Eval_{s=s_0})_{k_1,k_0} = (Incl)_{k_1,k_0} = 0$ for $(k_1,k_0) \neq (0,0)$.
\item $(Eval_{s=0})_{0,0} \oplus (Eval_{s=1})_{0,0}: \FM \to M \oplus M $ is split surjective
\item $(Incl)_{0,0}:M \to \FM$ induces a cochain homotopy equivalence between $\OL{n}_{0,0}$
complexes.
\end{enumerate}

There also exists a notion of $\LI$-homotopy which is defined in an analogous way, and
we refer readers to \cite{Fu3} for explicit statements on them.

\subsection{Weakly filtered bimodule homomorphisms.}\label{sec:bhomotopy}
For a filtered $\AI$-bimodule $(M,n)$ over $\NOVO$, we get a
filtered $\AI$-bimodule $(\widetilde{M},n)$ over $\NOV$ by
$$\widetilde{M} = M \otimes_{\NOVO} \NOV.$$
Note that $\widetilde{M}$ has a filtration $F^{\lambda}\widetilde{M}$
over $\lambda \in \RR$.

Let $\WT{M}$ be a filtered $(C_1,C_0)$ $\AI$-bimodule over $\NOV$,
and $\WT{M}'$  be a filtered $(C_1',C_0')$ $\AI$-bimodule over $\NOV$.
Let $f^{(i)}:C_i \to C_i'$ be a filtered $\AI$-homomorphisms.
A {\it weakly filtered $\AI$-bimodule homomorphism} $\WT{M} \to \WT{M}'$
over $(f^{(0)},f^{(1)})$ is a family of $\NOV$-module homomorphisms
$$\phi_{k_1,k_0}:B_{k_1}(C_1) \HH{\otimes} \WT{M} \HH{\otimes} B_{k_0} (C_0) \to \WT{M}'$$
with the following properties:

\begin{enumerate}
\item There exists $c\geq 0$ independent of $k_0,k_1$ such that
$$\phi_{k_1,k_0} \big( F^{\lambda_1}B_{k_1}(C_1) \HH{\otimes} F^{\lambda}\WT{M} \HH{\otimes} F^{\lambda_0}B_{k_0} (C_0) \big) \subset F^{\lambda_1 +\lambda + \lambda_0 - c}\WT{M}'$$
\item  $\HH{\phi} \circ \HH{d} = \HH{d}' \circ \HH{\phi}$
\end{enumerate}

Weakly filtered homomorphisms arise when we study the
invariance property of the Floer cohomology $HF(L_0,L_1) \cong HF(L_0,\phi(L_1))$ where the constant $c$ is
related to the Hofer norm of the Hamiltonian isotopy $\phi$.

Let $\phi,\psi:M \to M'$ be (weakly) filtered $\AI$-bimodule homomorphisms
over $(f^{(1)},f^{(0)})$ and $(g^{(1)},g^{(0)})$ respectively.
Here $f^{(i)}:C_i \to C_i'$, $g^{(i)}:C_i \to C_i'$ are filtered
$\AI$-homomorphisms. Then, $\phi$ is said to be homotopic to $\psi$ if
there exists models $\FM', \FC_i'$ of $[0,1]\times M', [0,1]\times C_i'$ respectively,
homotopies $\FF^{(i)} :C_i \to \FC_i'$ between $f^{(i)}$ and $g^{(i)}$, and
a (weakly) filtered $\AI$-bimodule homomorphism
$\Phi :M \to \FM'$ over $(\FF^{(1)},\FF^{(0)})$ such that
$Eval_{s=0} \circ \Phi = \phi, \;\; Eval_{s=1} \circ \Phi = \psi:$

\begin{equation}
\xymatrix{ \hspace{4cm}& M' & \hspace{1cm} \\
M \ar[ur]^{\phi} \ar[r]^{\Phi} \ar[dr]^{\psi} & \FM' \ar[u]_{Eval_0} \ar[d]^{Eval_1} & \hspace{1cm}\\
\hspace{4cm}& M' & \hspace{1cm}}
\end{equation}

Here we also recall the definitions of homotopy equivalences.
A filtered $\AI$-homomorphism $f:C \to C'$ is called a {\it homotopy equivalence } if
there exists a filtered $\AI$-homomorphism $g :C' \to C$ such that $f \circ g$ and $g\circ f$ are
homotopic to identity.

An (weakly) filtered $\AI$-bimodule homomorphism $\phi:M \to M'$ over
$(f^0,f^1)$  is said to be a {\it homotopy equivalence}
if there exist an (weakly) filtered $\AI$-bimodule homomorphism $\psi:M' \to M$ over
$(g^0,g^1)$ where $\phi \circ \psi$ and $\psi \circ \phi$ are homotopic to identity.
Here $g^1$ and $g^0$  are homotopy inverses of $f^1$ and $f^0$ respectively.

The notions discussed so far can be carried out in the $\LI$-setting also.

\section{Bar cohomology and isomorphisms}
Consider the bar complexes $(B\OL{C},\WH{d})$ and $(\HH{B}C,\HH{d})$ and
its subcomplexes $(\HH{B}^{cyc}C,\HH{d})$ and $(\HH{E}C,\HH{d})$. In this section,
we first show that cohomology of the bar complex is trivial for weakly unital $\AI$-algebras. But the cohomology of subcomplexes
are not trivial in general. In fact, 
cohomology of $(\HH{B}^{cyc}C,\HH{d})$ is isomorphic to cyclic homology of $\AI$-algebra $C$
and the cohomology of $(\HH{E}C,\HH{d})$ is the cyclic Chevalley Eilenberg homology of the induced $\LI$-algebra $\WT{C}$ of the
given $\AI$-algebra $C$.
We prove isomorphisms of these cohomology theories under quasi-isomorphisms.

\subsection{Bar complex}
Let $(\OL{C},\OL{m})$ be a unital strict $\AI$-algebra.
The following theorem is well-known.
\begin{lemma}\label{bartrivial}
The cohomology of the bar complex $(B\OL{C},\WH{d})$ is trivial.
\end{lemma}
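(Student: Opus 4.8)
The plan is to construct an explicit contracting homotopy built from the unit and thereby show that the identity map of $B\OL{C}$ is chain homotopic to zero. Since $\OL{C}$ is strict we have $\OL{m}_0 = 0$, and the unit $I \in \OL{C}^0$ has shifted degree $|I|' = |I| - 1 = -1$. I define $h : B\OL{C} \to B\OL{C}$, a map of degree $-1$, by prepending the unit:
$$h(x_1 \otimes \cdots \otimes x_n) = I \otimes x_1 \otimes \cdots \otimes x_n.$$
The goal is to prove $\WH{d}\circ h + h\circ \WH{d} = \mathrm{id}$, from which the triviality of the cohomology follows at once.

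To verify this I would expand $\WH{d}\,h(x_1\otimes\cdots\otimes x_n) = \WH{d}(I\otimes x_1\otimes\cdots\otimes x_n)$ via the coderivation formula \eqref{eq:hatd} and split the resulting terms according to whether the operation $\OL{m}_k$ engulfs the leading $I$ or acts only inside $x_1\otimes\cdots\otimes x_n$. In the first group the unit relations \eqref{unit} do almost all the work: $\OL{m}_1(I)=0$ kills the $k=1$ term, $\OL{m}_k(I,x_1,\ldots,x_{k-1})=0$ for $k\geq 3$ kills all higher terms, and only $\OL{m}_2(I,x_1)=x_1$ survives (with sign $+1$, since nothing precedes $I$), producing exactly $x_1\otimes\cdots\otimes x_n$, the identity. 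In the second group each operation acts past the leading $I$; comparing with \eqref{eq:hatd}, the Koszul sign incurred by commuting the operation past $I$ is precisely $(-1)^{|I|'} = -1$, so this group equals $-\,h\WH{d}(x_1\otimes\cdots\otimes x_n)$. Adding $h\WH{d}$ cancels the second group and leaves the identity.

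The one delicate point is the sign bookkeeping coming from the suspension, which I would track carefully through \eqref{eq:hatd}. The base case $n=1$ serves as the model: here $\WH{m}_2(I\otimes x_1)=x_1$, while the leftover term $(-1)^{|I|'}I\otimes\OL{m}_1(x_1) = -\,I\otimes\OL{m}_1(x_1)$ cancels exactly against $h\WH{d}(x_1)=I\otimes\OL{m}_1(x_1)$, giving $x_1$. I would stress that strictness is essential: were $\OL{m}_0\neq 0$, the coderivation term $\WH{m}_0$ would insert $m_0(1)$ immediately before and after the leading $I$, producing terms such as $I\otimes m_0(1)\otimes x_1\otimes\cdots$ that are \emph{not} matched by $h\WH{d}$, and the homotopy would fail. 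This is exactly the obstruction flagged in the introduction, and it is what motivates the later passage to Novikov fields and the modified homological algebra.
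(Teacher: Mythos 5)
Your proposal is correct and is essentially the paper's own proof: the paper defines exactly the same contracting homotopy $s(x_1 \otimes \cdots \otimes x_n) = I \otimes x_1 \otimes \cdots \otimes x_n$ and asserts $\WH{d} \circ s + s \circ \WH{d} = \mathrm{id}$, leaving the verification "without much difficulty" to the reader. Your expansion of that verification (unit relations killing all but the $\OL{m}_2(I,x_1)$ term, the Koszul sign $(-1)^{|I|'}=-1$ cancelling $h\WH{d}$) and your remark that $\OL{m}_0=0$ is essential just fill in the omitted details, the latter matching the paper's own later observation (Lemma \ref{lem:contho}) that this homotopy fails in the filtered case with $m_0 \neq 0$.
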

\begin{proof}
Let $I$ be the unit of the $\AI$-algebra. One can define the contracting homotopy $s$ of the
bar complex as follows:
$s:B\OL{C}  \to B\OL{C}$ is defined as
\begin{equation}
s(x_1 \otimes \cdots \otimes x_n) = I \otimes x_1  \otimes \cdots \otimes x_n.
\end{equation}
One can check without much difficulty that on $B\OL{C}$
$$ \WH{d} \circ s + s \circ \WH{d} = id - 0,$$
which provides the contracting homotopy of the bar complex.
\end{proof}
In the filtered case, we have
\begin{lemma}\label{bartrivial2}
The cohomology of the bar complex $(\HH{B}C,\HH{d})$ of a filtered homotopy unital $\AI$-algebra $C$ is isomorphic to $\NOVO$.
\end{lemma}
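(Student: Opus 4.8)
The plan is to run the energy-filtration spectral sequence, because the naive contracting homotopy of Lemma~\ref{bartrivial} breaks down once $m_0\neq 0$: one has $\HH{d}(1)=m_0(1)\neq 0$, so $B_0(C)=\NOVO$ is no longer a subcomplex and $1$ is not even a cocycle. First I would filter $\HH{B}C$ by the energy filtration $\mathcal{F}^n\HH{B}C=F^{n\lambda_0}\HH{B}C$ and pass to the associated graded. Since $m_0(1)\in F^{\lambda'}C$ with $\lambda'>0$, the operator $m_0$ strictly raises energy, so the $E_0$-differential is exactly the reduced bar differential $\WH{d}=\sum_{k\ge 1}\WH{m}_k$ of the unfiltered $\AI$-algebra $\OL{C}$, which has $\OL{m}_0=0$.

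Second, on each graded piece I would compute the cohomology of this reduced complex by reusing the contracting homotopy of Lemma~\ref{bartrivial}. Passing, if necessary, to a strict unital model homotopy equivalent to the given homotopy unital $C$ (as in \cite{FOOO}), the map $s(x_1\otimes\cdots\otimes x_n)=I\otimes x_1\otimes\cdots\otimes x_n$ now satisfies $\WH{d}\,s+s\,\WH{d}=\mathrm{id}$ on elements of tensor-length $\ge 1$, while on $B_0(\OL{C})$ one has $\WH{d}(1)=0$ and $\WH{d}(I)=\OL{m}_1(I)=0$, so the homotopy degenerates precisely on $B_0$. Hence the $E_1$-page is concentrated in bar-degree $0$ and is isomorphic, in each energy level, to the corresponding graded piece of $B_0(C)=\NOVO$, viewed as a $\Lambda^{(0)}(\lambda)$-module.

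Third, I would argue that the spectral sequence degenerates and converges. Because $E_1$ lives only in bar-degree $0$, while every higher differential $d_r$ emanating from that bar-degree is induced by an energy-raising operation (namely $m_0$ sending $B_0$ to $B_1$, the only operation with room to act on a length-$0$ element), its target sits in bar-degree $\ge 1$ where $E_1$ vanishes; thus all $d_r$ vanish and $E_\infty=E_1$. Reassembling the graded pieces and using that $\HH{B}C$ is complete with respect to the energy filtration then yields $H(\HH{B}C,\HH{d})\cong\NOVO$. Equivalently, one can exhibit the generator directly: starting from $1\in B_0$ and iterating the correction $u\mapsto u-s(\HH{d}u)$ against the perturbation $m_0$, the series converges by completeness to a cocycle $u=1+(\textrm{higher energy terms})$ whose class freely generates the cohomology over $\NOVO$. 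I expect the two genuine obstacles to be the convergence of the spectral sequence, which the excerpt itself flags as delicate in general but which is here controlled by the concentration of $E_1$ in a single bar-degree, and the reduction from a homotopy unit to a strict unit, which must be carried out compatibly with the energy filtration (or else absorbed, together with the unit homotopies, as further small perturbations handled by the same argument).
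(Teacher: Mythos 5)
Your proposal is correct and takes essentially the same route as the paper: the paper likewise reduces to a (strictly) unital model via the homotopy invariance of bar cohomology, exhibits the generator as $1$ plus higher-energy corrections, and disposes of the positive-length part by the energy-filtration spectral sequence whose $E_1$-differential is the unfiltered $\WH{d}$ handled by the contracting homotopy of Lemma \ref{bartrivial}. Indeed, your successive-approximation scheme $u \mapsto u - s(\HH{d}u)$ produces exactly the paper's explicit cycle $\gamma = \sum_{k\geq 0}(-1)^k (I \otimes m_0)^{\otimes k}$, and your degeneration argument (the image of $\HH{d}$ has no length-zero component) is a correct filling-in of the step the paper only sketches by reference to the following subsection.
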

\begin{proof}
In the next subsection, we will show that one quasi-isomorphic $\AI$-algebras have isomorphic cohomologies of
the bar complexes. Hence, we may assume that the filtered $\AI$-algebra is unital, by taking the 
the canonical model of the given filtered $\AI$-algebra which is homotopy unital.

In the case with $m_0=0$, the same homotopy $s$ defined as in the unfiltered case provides
the contracting homotopy. The only difference in this case is that $\HH{B}C$ has in addition $B_0C=\NOVO$.
As for $1 \in \NOVO$, we have $\HH{d}(1) = m_0(1) =0$, and $1$ is a $\HH{d}$-cycle. But clearly, the image of
$\HH{d}$ never contains $1$ as one of its components. Hence $1$ generates $\HH{d}$-cohomology in this case and
this proves the lemma for $m_0=0$.

Let us assume that $m_0 \neq 0$. Unfortunately in this case, $s$ does not define a contracting homotopy
(see Lemma \ref{lem:contho}), hence this case is a bit more complicated.
As $\HH{d}(1) = m_0(1) \neq 0$, it is not clear whether the cohomology of the bar complex is trivial or isomorphic to $\NOVO$.
But we claim that it is always isomorphic to $\NOVO$. Namely, we can always find a $\HH{d}$-cycle $\gamma$ which includes $1$ as
one of its component and as before this gives rise to a non-trivial $\HH{d}$-cohomology element.
We define $\gamma$ as
$$\gamma := 1 - L \otimes m_0 + L \otimes m_0 \otimes L \otimes m_0 + \cdots = \sum_{k=0}^\infty (-1)^k (L \otimes m_0)^{\otimes k}.$$
Note that the sum is well-defined as the energy of the summand goes to infinity as $k \to \infty$.
We claim that $\HH{d}(\gamma) = 0$. This follows from the following which uses unitality of the $\AI$-algebra.
$$\HH{d}((L \otimes m_0)^{\otimes k}) = m_0 \otimes (L \otimes m_0)^{\otimes k}
- m_0 \otimes (L \otimes m_0)^{\otimes k-1}.$$
Now, to prove that $\HH{d}$-cohomology is isomorphic to $\NOVO$, we can proceed  as we do in the next subsection that we consider energy filtration and
use the vanishing of bar cohomology in the unfiltered case and spectral sequence arguments to prove the vanishing of $\HH{d}$-cohomology for
tensors of positive length.

Also note that in the special case that $\AI$-algebra is unobstructed and has a bounding cochain $b \in C$ satisfying $\HH{d}(e^b)=0$,  
$e^b$ can be used instead of $\gamma$. One can show that any two such $\HH{d}$-cycle containing $1$ is cohomologous from the vanishing results.
\end{proof}

\subsection{Isomorphisms}
Let us call the cohomologies of the complex $(B\OL{C},\WH{d})$, $(\HH{B}C,\HH{d})$, $(\HH{B}^{cyc}C,\HH{d})$ and $(\HH{E}C,\HH{d})$ as
bar cohomology for short. In this section, we prove that two quasi-isomorphic (filtered) $\AI$-algebras have
isomorphic bar cohomology. 

For the cyclic case, it can be also proved by showing its equivalence to the cyclic homology of $\AI$-algebra, but we show the proof here as the similar arguments are used at several instances of this paper. Exactly the same argument works for all cases, so we present the proof in the symmetric case only.

We first consider the unfiltered case (in particular, we have $m_0=0$).
\begin{prop}\label{unfilteredhomo}
Let $\OL{C}_1,\OL{C}_2$ be unfiltered $\AI$-algebras over a ring $R$, and let $\OL{f}:\OL{C}_1 \to \OL{C}_2$ be an unfiltered $\AI$-homomorphism
which induces an isomorphism on $\OM_1$-cohomologies. Then, $f$ induces an isomorphism on bar cohomology.
\end{prop}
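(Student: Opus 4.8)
The plan is to prove this by an induction-on-word-length argument built on a filtration of the bar complex by tensor degree, reducing the statement to the hypothesis that $\OL{f}$ is a quasi-isomorphism on the single-letter (i.e. $\OM_1$-cohomology) level. First I would set up the increasing filtration $\mathcal{G}^{\leq n}(B\OL{C}) = \bigoplus_{k \leq n} T_k(\OL{C}[1])$ on each bar complex. Since each $\OL{m}_j$ (with $j \geq 1$) lowers tensor length by $j-1 \geq 0$, the codifferential $\WH{d}$ preserves this filtration, and the associated graded piece $\mathcal{G}^{\leq n}/\mathcal{G}^{\leq n-1} = T_n(\OL{C}[1])$ carries only the length-preserving part of $\WH{d}$, namely the extension $\WH{\OM}_1$ of $\OM_1$ as a coderivation. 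On the associated graded, the complex is therefore the $n$-fold tensor product complex $(\OL{C}[1]^{\otimes n}, \WH{\OM}_1)$ whose cohomology, by the Künneth formula over the field $R$, is $H(\OL{C},\OM_1)[1]^{\otimes n}$.

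The key step is that $\OL{f}$, being an $\AI$-homomorphism, induces a filtered chain map $\HH{\OL{f}}$ on bar complexes (this is the coalgebra map $\HH{f}$ of Lemma \ref{fe}, which in the unfiltered $m_0 = 0$ case preserves $\mathcal{G}^{\leq n}$ because each component $f_j$ has $j \geq 1$). On the associated graded at level $n$, the induced map is exactly $(\OL{f}_1)^{\otimes n}$ up to sign, where $\OL{f}_1$ is the linear part. By hypothesis $\OL{f}_1$ induces an isomorphism on $\OM_1$-cohomology, so by Künneth its $n$-fold tensor power induces an isomorphism on the cohomology of each associated graded piece. Thus $\HH{\OL{f}}$ is a quasi-isomorphism on every graded quotient of the filtration.

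From here I would invoke the comparison theorem for the spectral sequences of filtered complexes: a filtered chain map inducing an isomorphism on the $E_1$-pages induces an isomorphism on the abutment, provided the filtration is suitably bounded or the convergence is controlled. Here the filtration is exhaustive and bounded below (in fact $\mathcal{G}^{\leq 0} = 0$ in the length-$\geq 1$ part, and on each fixed total-degree-and-length bidegree it is finite), so a standard five-lemma induction on $n$ over the short exact sequences $0 \to \mathcal{G}^{\leq n-1} \to \mathcal{G}^{\leq n} \to T_n \to 0$ promotes the graded isomorphisms to an isomorphism on each $H(\mathcal{G}^{\leq n})$, and passing to the colimit yields an isomorphism on $H(B\OL{C})$. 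For the symmetric (and cyclic) subcomplexes, the filtration and $\HH{\OL{f}}$ both respect the $S_n$-action, so restricting to fixed elements is compatible with the filtration and the same argument applies verbatim, using that taking $S_n$-invariants commutes with cohomology over a field of characteristic zero.

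The main obstacle I anticipate is the convergence/boundedness bookkeeping for the spectral sequence comparison: because the bar complex is an infinite direct sum over tensor lengths and the total complex in any fixed cohomological degree can involve infinitely many lengths, one must check that the filtration degenerates appropriately in each bidegree so that the five-lemma induction actually converges to the right answer rather than stalling. In the unfiltered case with $m_0 = 0$ this is manageable because $\WH{d}$ strictly respects length-nonincrease and each total degree only meets finitely many lengths once $\OL{C}$ is bounded, but stating the convergence hypothesis cleanly—or else arguing the isomorphism length-by-length and assembling the pieces without circularity—is the delicate point that deserves care.
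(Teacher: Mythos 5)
Your proposal is correct and follows essentially the same route as the paper's proof: the paper also filters the bar complex (and its symmetric subcomplex) by tensor length, identifies the $E_1$-differential as the one induced by $\OM_1$, uses the quasi-isomorphism hypothesis to get an isomorphism on the $E_2$-page, and concludes via the standard comparison theorem for spectral sequences of bounded-below, exhaustive, Hausdorff filtrations. Your five-lemma-plus-colimit variant and your explicit Künneth identification of the associated graded are just slightly more detailed renderings of the same argument.
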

\begin{proof}
Let $\WH{f}:B\OL{C}_1 \to B\OL{C}_2$ be the associated cohomomorphism between two coalgebras.
By the lemma \ref{fe}, we can regard it as a chain map
$$\WH{f}:(E\OL{C}_1,\WH{d}_1) \to (E\OL{C}_2,\WH{d}_2).$$

To prove that $\WH{f}$ induces an isomorphism on bar cohomology, we will
use the spectral sequences induced by the following number filtrations
on  $E\OL{C}_i$'s. Namely,
We set
\begin{equation}\label{numberf}
N^k(E\OL{C}_i) = E\OL{C}_i \cap B_{0,\cdots,k}\OL{C}_1.
\end{equation}
Note that
$$ 0 = N^0(E\OL{C}_i) \subset N^1(E\OL{C}_i) \subset \cdots \subset E\OL{C}_i. $$
And the filtration is exhaustive, and Hausdorff:
$$\cup_{k} N^k(E\OL{C}_i) =E\OL{C}_i,\;\; \cap_{k} N^k(E\OL{C}_i) = \emptyset.$$
(We remark that for a filtered $\AI$-algebra, the number filtration
is not exhaustive because of the completion with respect to the energy, even in the case that $m_0 = 0$).

It is also easy to check that $\WH{d}$ preserves the filtration.
Therefore for each $i$, there exists a spectral sequence
with
$$\CE_1^{p,q}(E\OL{C}_i) = N^q(E\OL{C}_i^{p}) /
N^{q-1}(E\OL{C}_i^{p}),$$
which converges to the homology of $\WH{d}$ on $E\OL{C}_i$ (See \cite{Mc},\cite{W} for example).
Convergence can be easily seen as the filtration is bounded below,
 exhaustive and Hausdorff (\cite{W} Theorem 5.5.1).
Here the differential $\delta_1$ on $\CE_1$ is induced by
$\WH{m}_1$. Hence,
$$\CE_2^{p,q}(E\OL{C}_i) = E\big( H^{p}(\OL{C}_i) \big)
\cap B_q H^{p}(\OL{C}_i).$$
Note that $\WH{f}$ induces a map of spectral sequences since $\WH{f}$ preserves the number filtration:
$$\WH{f}\big(N^k(E\OL{C}_1)\big) \subset N^k(E\OL{C}_2).$$
 Since $f_1$ induces an isomorphism on $\OL{m}_1$ cohomologies, it is easy to see that
$\WH{f}$ induces an isomorphism on the $\CE_2$ levels of the spectral sequences.
 Hence by the standard arguments of the spectral sequences, $\WH{f}$ induces an isomorphism
between $\WH{d}$ cohomologies of $E\OL{C}_i$.
The other cases follow from the same argument.
\end{proof}

Now, we consider a filtered case.
\begin{prop}\label{filteredhomo}
Let $C_1,C_2$ be gapped filtered $\AI$-algebras over $\NOVO$, and let $f:C_1 \to C_2$ be a gapped filtered $\AI$-homomorphism, where
$\OL{f}_1$ induces an isomorphism on $\OM_1$-homologies. Then, $f$ induces an isomorphism on bar cohomology
of $C_1$ and $C_2$.
\end{prop}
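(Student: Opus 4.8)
The plan is to reduce the filtered statement to the unfiltered Proposition \ref{unfilteredhomo} by means of the energy spectral sequence associated with the gapped structure, in close parallel to the argument just given. First I would reexpress everything on the symmetric subcomplexes: by Lemma \ref{fe}, the coalgebra map $\HH{f}:\HH{E}C_1 \to \HH{E}C_2$ is a chain map for the codifferentials $\HH{d}_i$, and it suffices to show it induces an isomorphism on $\HH{d}$-cohomology. Since $C_1,C_2$ and $f$ are all $G$-gapped, the energy-filtration spectral sequence from the subsection on gapped conditions applies: I would use the filtration $\CF^n(\HH{E}C_i) = F^{n\lambda_0}(\HH{E}C_i)$ induced by the filtration on $\NOVO$, where $\lambda_0>0$ is the gap. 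The map $\HH{f}$ preserves this energy filtration because $f$ is gapped (so $f_k$ preserves $F^\lambda$), and hence induces a map of spectral sequences.

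The key computational input is the identification of the $\CE_1$-page. Because the structure is gapped, the reduction modulo $\NOVO^+$ recovers the unfiltered operations $\OL{m}_k$, and the associated graded of $(\HH{E}C_i,\HH{d})$ with respect to $\CF$ is, on each graded piece, the unfiltered symmetric bar complex $(E\OL{C}_i,\WH{d})$ tensored with the appropriate graded piece $gr_n(\CF\NOVO)$. Concretely, the $\CF$-graded differential is governed by $\OL{d} = \sum_k \WH{\OL{m}}_k$, since all the positive-energy corrections $m_k - \OL{m}_k$ strictly raise energy and therefore vanish on the associated graded. Thus $\HH{f}$ on the $\CE_1$-page is, up to the $gr_n(\CF\NOVO)$ factor, exactly the unfiltered map $\WH{\OL{f}}$ studied in Proposition \ref{unfilteredhomo}. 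Since $\OL{f}_1$ induces an isomorphism on $\OM_1$-cohomology by hypothesis, Proposition \ref{unfilteredhomo} shows $\WH{\OL{f}}$ is an isomorphism on unfiltered bar cohomology, and flatness of $gr_n(\CF\NOVO)$ over $R$ lets me conclude that $\HH{f}$ is an isomorphism on the $\CE_1$-page (equivalently, after one more step, on $\CE_2$). By the standard comparison theorem for maps of spectral sequences, $\HH{f}$ is then an isomorphism on the limit, i.e.\ on $\HH{d}$-cohomology of $\HH{E}C_i$. The other three complexes $(B\OL{C},\WH{d})$, $(\HH{B}C,\HH{d})$ and $(\HH{B}^{cyc}C,\HH{d})$ are handled verbatim by the same filtration argument.

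The main obstacle, and the point requiring care, is \emph{convergence} of the energy spectral sequence. Unlike the unfiltered case of Proposition \ref{unfilteredhomo}, where the number filtration is bounded below, exhaustive and Hausdorff, here the energy filtration on the completed complex $\HH{E}C_i$ is not bounded, and (as the excerpt warns in the gapped-condition subsection) the higher differentials $\delta_r$ need not vanish for large $r$. Consequently I cannot invoke a bounded-filtration convergence theorem directly. The way around this is to exploit completeness and exhaustiveness of $\CF$ together with the gapped discreteness of $\pi(G)$: one applies the Eilenberg--Moore/complete-filtration comparison criterion, showing that an isomorphism on $\CE_1$ for a map between complete, Hausdorff filtered complexes whose filtration is exhaustive forces an isomorphism on homology. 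I would model this step on the corresponding convergence argument of \cite{FOOO} cited in the gapped-condition subsection, where exactly such a complete filtered comparison is carried out; the discreteness of the energy values guarantees that on each fixed energy level only finitely many terms contribute, which is what makes the completeness argument go through.
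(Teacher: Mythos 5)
Your proposal is correct and follows essentially the same route as the paper's proof: the energy filtration $\CF^n = F^{n\lambda_0}$ from the gapped condition, identification of the $\CE_1$-page with the unfiltered symmetric bar complex tensored with $gr_*(\CF\NOVO)$, the number-filtration argument of Proposition \ref{unfilteredhomo} to get an isomorphism on $\CE_2$, and the Eilenberg--Moore comparison theorem (Weibel 5.5.11) to bypass the convergence problem. The only cosmetic slips are that the isomorphism furnished by Proposition \ref{unfilteredhomo} lives on the homology of the $\CE_1$-page (i.e.\ on $\CE_2$), not on $\CE_1$ itself as a graded module, and that the comparison step is Weibel's theorem rather than the \cite{FOOO} convergence argument, which the paper explicitly sets aside as inapplicable here; neither affects the validity of your argument.
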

\begin{remark}
Note that the statement is over $\NOVO$ coefficients instead of $\NOV$. If $\WT{C}_1$ and $\WT{C}_2$ are
filtered $\AI$-algebras over $\NOV$ obtained from the above $C_1$ and $C_2$ by taking tensor products with $\NOV$, 
then $f:\WT{C}_1 \to \WT{C}_2$ also induces an isomorphism on bar cohomology.
\end{remark}  
\begin{proof}
First, note that $\HH{d}$ and $\HH{f}$ do not  preserve the number filtration. Namely,
$f_0(1),m_0(1) \in \Lambda^+_{0,nov}$ increases the number of tensor products in a given term.
We will consider the energy filtration and consider the associated spectral sequences.
Then, we prove that the induced map between spectral sequences on the $\CE_2$ level
is an isomorphism by considering a number filtration on $\CE_1$ level of the spectral sequences.

Recall that $\HH{E}C_i$ has an energy filtration, which we denoted as $F^{\lambda}(\HH{E}C)$.
By the gapped condition on $C_1,C_2$ and $f$, we can take $\lambda_0 >0$ which works for both of them. We consider
the filtration $\CF^n(\HH{E}C_i) = F^{n\lambda_0}(\HH{E}C_i)$.
It is easy to check that this filtration is complete:
$$ \HH{E}C_i = \lim_{\leftarrow} \HH{E}C_i / \CF^n (\HH{E}C_i).$$
Note that by (\ref{menergy}) and (\ref{fenergy})
$$\HH{d} \big( \CF^\lambda (\HH{E}C_i) \big) \subset \CF^\lambda (\HH{E}C_i), \;\;
\HH{f} \big( \CF^\lambda (\HH{E}C_1) \big) \subset \CF^\lambda (\HH{E}C_2).$$
Hence for each $i$, we have a spectral sequence with
$$\CE_1^{p,q} (\HH{E}C_i) = \CF^{q}(\HH{E}C_i^p) / \CF^{q+1}(\HH{E}C_i^p),$$
and a morphism of spectral sequences induced from $\HH{f}$. But the convergence
of these spectral sequences is not clear as $d_r \neq 0$ is even for large $r$ in general.

But we need the spectral sequences for comparison purposes only and for such a
purpose, convergence of the spectral sequences are not required by
the following general theorem on spectral sequences.

\begin{theorem}[\cite{W} Eilenberg-Moore Comparison Theorem 5.5.11]
Let $f:V \to W$ be a map of filtered complexes of modules, where both $V$ and $W$ are
complete and exhaustive. Fix $r \geq 0$. Suppose $f_r: \CE^{p,q}_r(V) \cong \CE^{p,q}_r(W)$ is an isomorphism
for all $p$ and $q$. Then $f:H^*(V) \to H^*(W)$ is an isomorphism.
\end{theorem}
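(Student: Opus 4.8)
The plan is to deduce the isomorphism on cohomology from the assumed isomorphism on a single $\CE_r$-page by passing to a mapping cone and then exploiting completeness and exhaustiveness in place of the (missing) convergence of the spectral sequences. First I would propagate the hypothesis up the tower of pages. Since $\CE_{s+1}$ is the cohomology of $(\CE_s,d_s)$ and $f$ is a filtered chain map commuting with every differential $d_s$, the five lemma applied to the short exact sequences computing cycles and boundaries shows by induction that $f_s:\CE^{p,q}_s(V)\to\CE^{p,q}_s(W)$ is an isomorphism for every $s\geq r$ and all $p,q$; passing to the limit, $f$ induces an isomorphism on the $\CE_\infty$-pages as well.

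Next I would reduce the statement to a vanishing result via the mapping cone $X=\mathrm{Cone}(f)$, filtered by $\CF^n X=\CF^n V\oplus\CF^n W$ (with the usual degree shift on the $V$-summand). As a graded module $X$ is a direct sum, so the short exact sequence of filtered complexes
\[
0 \to W \to X \to V[1] \to 0
\]
is split in each degree and therefore induces a long exact sequence of spectral sequences whose connecting homomorphism $\CE_r(V[1])\to\CE_r(W)$ is, up to the shift, precisely $f_r$. As $f_r$ is an isomorphism, exactness forces $\CE^{p,q}_r(X)=0$ for all $p,q$, and by the previous paragraph this persists for all $s\geq r$, so $\CE_\infty(X)=0$. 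Since the long exact cohomology sequence of the cone identifies the assertion that $f:H^*(V)\to H^*(W)$ is an isomorphism with the assertion $H^*(X)=0$, it remains only to prove that a complete and exhaustive filtered complex with vanishing $\CE_\infty$ is acyclic.

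This last implication is the heart of the matter, and it is exactly here that completeness and exhaustiveness substitute for convergence. Given $\xi\in H^p(X)$ represented by a cocycle $x_0$, the vanishing of $\CE^{p,0}_\infty(X)$ allows me to subtract a coboundary and obtain a cohomologous cocycle $x_1\in\CF^1 X$; iterating with $\CE^{p,q}_\infty(X)=0$ produces cocycles $x_q\in\CF^q X$ with each difference $x_{q+1}-x_q$ a coboundary $-d y_q$, where $y_q$ has strictly increasing filtration level. Completeness, i.e. $X=\lim_{\leftarrow} X/\CF^n X$, guarantees that $x_q\to 0$ and that $\sum_q y_q$ converges in $X$, so that $x_0=d\big(\sum_q y_q\big)$ and hence $\xi=0$; phrased invariantly, the induced filtration on $H^*(X)$ is complete and Hausdorff with associated graded $\CE_\infty(X)=0$, whence $H^*(X)=0$.

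The main obstacle is precisely this passage from $\CE_\infty=0$ to $H^*=0$. Because the spectral sequences need not converge---the differentials $d_r$ may be nonzero for arbitrarily large $r$, as stressed in the preceding discussion---one cannot simply identify $\CE_\infty$ with the associated graded of $H^*$, and a naive telescoping of the corrections $y_q$ need not terminate. Controlling these tails is where the completeness hypothesis is indispensable: it ensures that the successive approximations converge inside $X$ and that no nonzero class survives in $\bigcap_n\CF^n H^*(X)$, the requisite bookkeeping being a Mittag--Leffler vanishing argument for the inverse system $\{X/\CF^n X\}$.
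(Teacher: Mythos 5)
Your overall skeleton (mapping cone, then completeness in place of convergence) is the same as the paper's, which follows Weibel, but two of your key steps have genuine gaps, and they are exactly the two technical points the theorem turns on. The first gap is the filtration you put on the cone. With $\CF^n X=\CF^n V\oplus \CF^n W$, the short exact sequence $0\to W\to X\to V[1]\to 0$ of filtered complexes induces a long exact sequence only at the $\CE_1$-level: since $f$ preserves the filtration, $\CE_0(X)$ is precisely the cone of $f_0$ on associated gradeds, and the connecting homomorphism of the resulting long exact sequence is $f_1$, not $f_r$. Taking cohomology destroys exactness, so for $r\geq 2$ there is no ``long exact sequence of $\CE_r$-pages'' at all, and the hypothesis that $f_r$ is an isomorphism never enters; as written your argument proves only the cases $r=0,1$. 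The repair, which is what the paper (following Weibel) does, is to shift the filtration on the $V$-summand: $\CF^q X=\CF^{q+r}V[1]\oplus\CF^q W$. With this shift the $f$-component of the cone differential drops filtration by $r$ relative to the new indexing, so it is invisible on the pages $\CE_0,\dots,\CE_{r-1}$ (which split as direct sums of the reindexed pages of $V$ and $W$), and it first appears in the differential on page $r$, where $\CE_{r+1}(X)$ becomes the cohomology of the cone of $f_r$ and hence vanishes. This produces vanishing at a \emph{fixed finite page}, which is strictly stronger than $\CE_\infty(X)=0$.

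The second gap is your reduction to the claim that a complete, exhaustive filtered complex with $\CE_\infty=0$ is acyclic. That claim is false in general: without a regularity hypothesis, vanishing of $\CE_\infty$ does not force $H^*=0$ (this is the whole-plane obstruction, and it is why Weibel's Complete Convergence Theorem 5.5.10 carries a regularity assumption). Your own iteration shows where it breaks: $\CE_\infty^{p,q}(X)=0$ lets you write $x_q=x_{q+1}+d y_q$ with $x_{q+1}\in\CF^{q+1}X$, but it gives no control whatsoever on the filtration level of the primitive $y_q$, since boundaries in $B_\infty$ may have primitives of arbitrarily negative filtration; your assertion that the $y_q$ have ``strictly increasing filtration level'' is exactly the unproved point, and without it $\sum_q y_q$ need not converge even in a complete complex. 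What is true, and what the shifted-cone argument supplies, is this: if $\CE_s(X)=0$ for a \emph{single} finite $s$ and all $p,q$, then each $y_q$ can be chosen in $\CF^{q-s+2}X$, a uniform offset, so the filtration levels of the $y_q$ do tend to infinity, $\sum_q y_q$ converges by completeness, and $x_0=d\big(\sum_q y_q\big)$. So finite-page vanishing is not a convenience but the essential input; the Mittag--Leffler remark about the system $\{X/\CF^n X\}$ does not substitute for it, because the failure is not a $\lim^1$ problem for that tower but the non-uniformity of the primitives. (Your first paragraph, propagating the isomorphism from $\CE_r$ to all later pages, is correct and standard.)
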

The idea of the proof of the above theorem is to use the mapping cone complex, which is
also filtered by $F^q cone(f) = F^{q+r} V [1] \oplus F^q W$. And
the fact that $f^r$ is an isomorphism of $\CE_r$, implies that $\CE^r_{p,q}(cone(f))=0$ for all $p,q$ by the
related long exact sequence. In this case, spectral sequence obviously collapses and
one can apply the complete convergence theorem (see \cite{W})
to conclude that $H_*cone(f)$ is trivial. Since $cone(f)$ is an exact complex, this implies the above theorem.

In our case, note that we have
$$\CE_1^{p,q} \cong E\OL{C}^p \otimes_R gr_*( \mathcal{F} \NOVO).$$
And the differential $\delta_1$ on $\CE_1$-level is induced from $\WH{d}$ which is the energy zero part of
$\HH{d}$.

Now, we show that the induced map $f_*$ between $\CE_2$ levels of the spectral sequences is an isomorphism.
Note that  the induced map from $f$ between $\CE_1$ levels of the spectral sequences is
induced by $\WH{f}$, which is the energy zero part of $\HH{f}$.
Note also that $\OL{f}_0(1) = \OL{m}_0 = 0$ and elements of $E\OL{C}$ are of finite
sum since there cannot be an infinite sum without
having the energy going to infinity.

  Hence, for each fixed $p$ and $q$, we consider the number filtration
$N^k(E_1^{p,q}(E\OL{C}_i))$ as in (\ref{numberf}) and there exists another spectral sequence arising from this number filtration converging to the homology of $(\CE_1^{p,q}, \WH{d})$.
Note that $\WH{f}$ induces an isomorphism between $\OL{m}_1$-cohomologies. Hence, the
$\WH{f}$ induces an isomorphism of the spectral sequences from the number filtration,
and induces an isomorphism between homologies of $(\CE_1^{p,q}, \WH{d})$.

This shows that the induced map $f_*$ on the $\CE_2$-levels of the spectral sequences (with respect to the energy filtration) is indeed an isomorphism.
Hence by the Eilenberg-Moore comparison theorem, $\HH{f}$ induces an
isomorphism on bar cohomology. The other cases follow  exactly from the same argument. 
\end{proof}
\begin{remark}
In \cite{FOOO}, the spectral sequence of $(C,m_1)$ (not $\HH{B}C$ nor $\HH{E}C$)
with respect to the energy filtration was shown to converge, by
using the fact that $\AI$-algebras of Lagrangian submanifolds are weakly finite.
\end{remark}

\section{Hochschild, Chevalley-Eilenberg homology and isomorphisms}
In this section, we recall a definition of Hochschild (resp. Chevalley-Eilenberg) homology of an $\AI$ (resp. $\LI$)-bimodule and
consider their isomorphism properties under weakly filtered gapped homotopy equivalences.  Weakly filtered case where the related map is not filtration preserving, is essential to discuss the invariance of Lagrangian Floer homology when only one of the Lagrangian submanifold is
moved by an Hamiltonian isotopy.

\subsection{Definition of Hochschild homology}
We recall the definition of Hochschild homology of an $\AI$-bimodule $(M, \{ n_{*,*} \})$ of an $\AI$-algebra $A=(C,\{m_*\})$.(See \cite{GJ},\cite{KS} or \cite{S} for more details on this subsection).

We begin with a remark that the Hochschild homology can be regarded as a bar cohomology in the following way.
In \ref{def:modbar} , we have a complex $(B^M(C,C), D^M)$. In fact, one can consider a cyclic
group action and an induced subcomplex $(B^M(C,C)^{cyc},D^M)$ considering the fixed elements of such a cyclic action. The homology of this subcomplex is
called a Hochschild homology of a bimodule $M$ over $A$.

Now, we give more detailed and conventional description of Hochschild homology.
We denote
$$C^k(A,M) = M[1] \otimes C[1]^{\otimes k}.$$
We will denote its degree $\bullet$ part as $C^k_\bullet(A,M)$.

We define the Hochschild chain complex
\begin{equation}\label{def:hochchain}
C_{\bullet}(A,M) = \HH{\oplus}_{k \geq 0} C^k_\bullet(A,M),
\end{equation}
after completion with respect to energy filtration and 
with the boundary operation $$d^{Hoch} :C_{\bullet}(A,M) \to C_{\bullet+1}(A,M)$$  defined as follows: 
we will underline the module element for reader's convenience.
For $v \in M$ and $x_i \in A$, 
$$d^{Hoch} (\UL{v} \otimes x_1 \otimes \cdots \otimes x_k)
= \sum_{\stackrel{1 \leq j \leq k+1 -i}{1 \leq i}} (-1)^{\E_1} \UL{v} \otimes \cdots \otimes x_{i-1} \otimes m_j(x_i,\cdots,x_{i+j-1}) \otimes \cdots \otimes x_k$$
$$+ \sum_{i=1}^{k+1}  (-1)^{\E_2} \UL{v} \otimes x_1 \otimes \cdots \otimes x_{i-1} \otimes m_0(1) \otimes x_{i} \otimes \cdots \otimes x_k$$ 
\begin{equation}\label{eq2}
 + \sum_{\stackrel{0 \leq i, j \leq k}{ i+j \leq k}} (-1)^{\E_3} \UL{n_{i,j}\big(x_{k-i+1},\cdots,x_k,v, x_1,\cdots,x_{j} \big)} \otimes x_{j+1} \otimes \cdots 
\otimes x_{k-i}
\end{equation}

Here the sign $\E_1,\E_2,\E_3$ is obtained from Koszul sign convention as usual. More explicitly, we have 
$$\E_1 = \E_2 = |v|' + |x_1|' + \cdots + |x_{i-1}|',$$
$$\E_3 = \big(\sum_{s=1}^i |x_{k-i+s}|'\big)\big( |v|' + \sum_{t=1}^j |x_{t}|'\big).$$

The second and third type expressions in (\ref{eq2}) arise as in the figure.
One considers an element $v \otimes x_1 \otimes \cdots \otimes x_k$ as placed in a circle with special marking on the module element $v \in M$. And the boundary operation $d^{Hoch}$ may be understood as taking an appropriate operation on elements placed on a connected arc of the circle or the insertion of $m_0$, and reading off the resulting element starting from the special marking. 
\begin{figure}
\begin{center}
\includegraphics[height=1in]{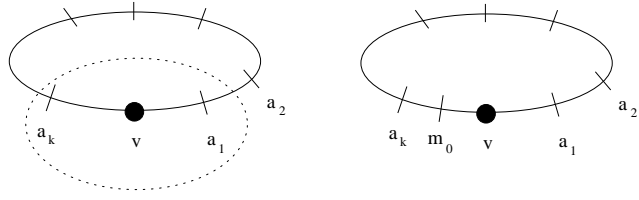}
\caption{Hochschild boundary}
\label{dhat}
\end{center}
\end{figure}
In particular, in the second terms of (\ref{eq2}), we do not insert $m_0$
ahead of $v$, because in the operation corresponding to the right hand side figure, $m_0$ will be inserted
in the last position, after $x_k$. 

The following  is standard and can be easily understood from the figure.
\begin{lemma}
$$d^{Hoch} \circ d^{Hoch}=0$$
\end{lemma}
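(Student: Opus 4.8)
The plan is to verify $d^{Hoch} \circ d^{Hoch} = 0$ by showing that this identity is a direct consequence of the already-established equation $\HH{d} \circ \HH{d} = 0$ for the $\AI$-bimodule structure, together with the $\AI$-algebra relations $\HH{d} \circ \HH{d} = 0$ on $C$. The conceptual point, already signaled in the remark preceding the definition, is that the Hochschild complex $C_{\bullet}(A,M)$ is naturally identified with the cyclic-invariant subcomplex $(\OL{B}^M(C,C)^{cyc}, D^M)$ of the bimodule bar complex. First I would make this identification precise: an element $\UL{v} \otimes x_1 \otimes \cdots \otimes x_k$ of $C^k(A,M)$ corresponds to the cyclic orbit of $v$ placed among the $x_i$ arranged on a circle, and the three groups of terms in $d^{Hoch}$ are exactly the three groups of terms in $D^M$ (namely $\WH{d}$ acting on the algebra inputs, the insertions of $m_0(1)$, and the bimodule operations $n_{i,j}$ that wrap around the marked point $v$). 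Once this dictionary is set up, $d^{Hoch}$ is simply the restriction of $D^M$ to the cyclic subcomplex, and so $d^{Hoch} \circ d^{Hoch} = 0$ follows immediately from $D^M \circ D^M = 0$, which is the defining property of the $\AI$-bimodule $M$.

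Alternatively, if one wants a self-contained computational proof that does not invoke the identification with $\OL{B}^M(C,C)$, I would expand $d^{Hoch} \circ d^{Hoch}$ and organize the resulting terms by \emph{type}, according to how the two successive operations interact. The composite produces terms where two operations from the algebra part compose (these cancel by the $\AI$-relations \eqref{aiformula} applied to $C$, together with the $m_0$-insertion terms), terms where an algebra operation feeds into a bimodule operation $n_{i,j}$ and vice versa, and terms where two bimodule operations compose (these cancel by the $\AI$-bimodule equation, of which \eqref{bimoduleeq} is the lowest instance). The bookkeeping is governed by the Koszul signs $\E_1, \E_2, \E_3$ recorded in the statement, and the cancellations are precisely those packaged in $\HH{d} \circ \HH{d} = 0$ read off on $\OL{B}^M(C,C)$; the cyclic symmetrization guarantees that the wrap-around terms (where $n_{i,j}$ consumes elements on both sides of the marked point) match up correctly.

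The main obstacle, and the only place where genuine care is required, is the sign and the treatment of the $m_0$-insertion terms. With $m_0 \neq 0$, the second sum in \eqref{eq2} inserts $m_0(1)$ in every gap \emph{except} directly before $v$, and one must check that the interaction of these insertions with the other two types of terms cancels correctly. In the standard ($m_0 = 0$) Hochschild story the $m_0$ terms are absent and the verification reduces to the classical one; here the figure (Figure \ref{dhat}) is the right guide, since it makes geometrically transparent why $m_0$ is never inserted ahead of $v$ (it would be inserted in the last position, after $x_k$, in the wrap-around reading). I expect that once the circle picture is used to bijectively match each term of $d^{Hoch} \circ d^{Hoch}$ with a term of $D^M \circ D^M$, all signs agree by the Koszul convention and every term cancels in pairs. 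Since the excerpt has already established $D^M \circ D^M = 0$, the cleanest route is the first one: state the identification with the cyclic subcomplex of $(\OL{B}^M(C,C), D^M)$ explicitly and deduce the lemma as a one-line corollary.
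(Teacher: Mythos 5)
Your proposal is correct and matches the paper's own treatment: the paper offers no detailed computation, remarking just before Definition \ref{def:hochchain} that the Hochschild complex is the cyclic-invariant subcomplex $(B^M(C,C)^{cyc},D^M)$ of the bimodule bar complex, and then calling the lemma ``standard'' with the circle figure as the guide. Your primary route (deducing $d^{Hoch}\circ d^{Hoch}=0$ from $D^M\circ D^M=0$ via this identification, with the wrap-around $m_0$-insertion convention handled exactly as in the paper's figure) is precisely this argument made explicit.
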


The homology of $d^{Hoch}$ is called the {\em Hochschild homology } of $M$ over $A$ and is denoted as
$H_{\bullet}(A,M)$.
In the case of $M=A$, where the $\AI$-bimodule structure on $A$ is given by 
$$n_{i,j} = m_{i+j+1},$$
we have the Hochschild homology $H_{\bullet}(A,A) = HH_{\bullet}(A)$ of an $\AI$-algebra $A$.

\subsection{Definition of Chevalley-Eilenberg homology}\label{sec:CEho}
Here, we recall the definition of Chevalley-Eilenberg ($\LI$-algebra)  homology with coefficient in a $\LI$-bimodule.

Let $M$ be a $\LI$-bimodule over an $\LI$-algebra $\WT{A}=(C,\{l_*\})$ (see Definition \ref{def:lmod}).
We denote
$$CE^k(\WT{A},M) = M[1] \otimes E_kC$$
and denote its degree $\bullet$ part as $CE^k_\bullet(\WT{A},M)$.

We define the chain complex
$$CE_{\bullet}(\WT{A},M) = \HH{\oplus}_{k \geq 0} CE^k_\bullet(\WT{A},M),$$
after completion with respect to energy filtration.

The boundary operation $$d^{CE} :CE_{\bullet}(\WT{A},M) \to CE_{\bullet+1}(\WT{A},M)$$  defined as follows, using
the $\LI$-module structure maps $\eta$:
for $v \in M$ and $x_i \in C$ for $i=1,\cdots,k$, 
we define
$$d^{CE}(\UL{v} \otimes [x_1,\cdots,x_k]) = $$
$$\sum_{k_1 \geq 0} \sum_{(k_1,k-k_1) shuffle}  (-1)^{\E(\sigma,\vec{x}_{\geq 1})} \UL{\eta_{k_1}\big( \UL{v} \otimes [
x_{\sigma(1)},\cdots,x_{\sigma(k_1)}]\big)} \otimes [x_{\sigma(k_1+1)},\cdots,x_{\sigma(k)}] $$
$$+ \sum_{k_1 \geq 0} \sum_{(k_1,k-k_1) shuffle} (-1)^{|v|'+\E(\sigma,\vec{x}_{\geq 1})} \UL{v} \otimes 
[l_{k_1}([x_{\sigma(1)},\cdots,x_{\sigma(k_1)}]),x_{\sigma(k_1+1)},\cdots,x_{\sigma(k)}].$$
By the definition of $\LI$-bimodule, we have
 $$d^{CE} \circ d^{CE} =0,$$
 and we denote its homology as $H^{CE}_{\bullet}(\WT{A},M)$.
\subsection{Weakly filtered homotopy equivalences and isomorphisms}
We show isomorphism properties of the Hochschild or Chevalley-Eilenberg homology under weakly filtered homotopy equivalences.
To do so, we first show that there is a canonical chain map between the corresponding chain complexes.

Suppose we have an $\AI$-bimodule homomorphism $\phi:M \to N$ between two $\AI$-bimodules over $A$. Namely, we have
a family of maps $\phi_{i,j}: A^{\otimes i} \otimes M \otimes A^{\otimes j} \to N$ satisfying $\AI$-bimodule equations.

We define a chain map
$\phi_* : C_{\bullet}(A,M) \mapsto  C_{\bullet}(A,N)$ as $$\phi_*(v \otimes x_1 \otimes \cdots \otimes  x_k) = $$
$$\sum_{ \stackrel{0 \leq i, j \leq k}{i+j \leq k}} (-1)^{\E_2} \phi_{i,j}\big(x_{k-i+1},\cdots,x_k,v, x_1,\cdots,x_{j} \big) \otimes x_{j+1} \otimes \cdots 
\otimes x_{k-i},$$
where $\E_2$ is as given above.
One can check without much difficulty that 
\begin{lemma} we have
$$\phi_* \circ d^{Hoch}_M =   d^{Hoch}_N \circ \phi_*,$$
and hence it induces a map $\phi_* : H_\bullet(A,M) \to H_\bullet(A,N).$
\end{lemma}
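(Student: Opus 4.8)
The plan is to deduce the chain-map identity directly from the defining equation of an $\AI$-bimodule homomorphism, by realizing both $d^{Hoch}$ and $\phi_*$ as the \emph{cyclicizations} of operations that already live on the bimodule bar complex. Recall from the opening of this section that the Hochschild complex $(C_\bullet(A,M),d^{Hoch})$ is identified with the cyclic subcomplex $(B^M(C,C)^{cyc},D^M)$, where a generator $\UL{v}\otimes x_1\otimes\cdots\otimes x_k$ is read off a circle starting from the distinguished module slot. Under this identification the three groups of terms in $d^{Hoch}$ in $(\ref{eq2})$ — the interior $m_j$'s, the $m_0$-insertions, and the wrap-around $n_{i,j}$'s — are exactly the summands of $D^M$ written in the cyclic representative, the wrap-around accounting for those arcs on the circle that pass through the marked point.

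First I would observe that $\phi_*$ is likewise the cyclicization of the extended homomorphism $\HH{\phi}$. Since $\phi$ is a homomorphism over $(id,id)$, the components $\HH{f}^1,\HH{f}^0$ appearing in the definition of $\HH{\phi}$ are identities, so $\HH{\phi}$ simply applies a single $\phi_{p,q}$ to the arc surrounding the module slot. Cyclicizing this — i.e. summing over all arcs $x_{k-i+1},\cdots,x_k,v,x_1,\cdots,x_j$ through the marked point — reproduces precisely the defining sum for $\phi_*$, the sign $\E_2$ being the Koszul sign generated by this cyclic rearrangement. Second, I would invoke the $\AI$-bimodule homomorphism equation $\HH{\phi}\circ D^M = D^N\circ\HH{\phi}$, which holds by hypothesis on $B^M(C,C)$. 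Because both $D$ and $\HH{\phi}$ commute with the cyclic $\sigma_0$-action and hence descend to the cyclic subcomplexes (this is the bimodule analogue of the lemma asserting that $\WH{d}$ descends to $B^{cyc}\OL{C}$), restricting the homomorphism equation to $B^M(C,C)^{cyc}$ and re-expressing everything in the circle representative yields exactly $\phi_*\circ d^{Hoch}_M = d^{Hoch}_N\circ\phi_*$. The induced map on homology is then immediate.

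Alternatively one can verify the identity by a direct term-by-term matching using the figure: expanding $\phi_*\circ d^{Hoch}_M$ and $d^{Hoch}_N\circ\phi_*$ produces terms in which an algebra operation $m_j$ (or an $m_0$-insertion), a source operation $n^M_{i,j}$, and a target operation $n^N_{i,j}$ each appear composed with some $\phi_{p,q}$, and these cancel in pairs precisely according to the components of the $\AI$-bimodule homomorphism relation. The main obstacle in either route is purely the sign bookkeeping: one must confirm that the Koszul signs $\E_1,\E_2,\E_3$ generated by moving the marked module element to the front and cyclically permuting are compatible with the signs carried by the homomorphism equation. This is the same cyclic-sign verification already required for the descent of $\WH{d}$ to the cyclic subcomplex, so once that compatibility is recorded the remaining check is routine, which is why the statement can be asserted to hold without much difficulty.
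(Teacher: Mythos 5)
Your proposal is correct, and it is more of a proof than the paper itself gives: the paper simply asserts the identity as something "one can check without much difficulty," i.e.\ it leaves the direct term-by-term verification (your second route) to the reader. Your first route is a genuine conceptual derivation that the paper only gestures at: Section 4.1 does open by identifying the Hochschild complex with the cyclic subcomplex $(B^M(C,C)^{cyc},D^M)$ of the bimodule bar complex, but it never exploits this to prove the lemma, whereas you use it to reduce the chain-map identity to the defining equation $\HH{\phi}\circ\HH{d}=\HH{d}'\circ\HH{\phi}$ of an $\AI$-bimodule homomorphism restricted to cyclic invariants. What your route buys is that all signs are absorbed into already-fixed conventions (the Koszul signs in the cyclic action and in $\HH{\phi}$), so no new sign computation is needed beyond the descent statements; what the direct check buys is self-containedness. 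Two points deserve sharpening. First, your phrase that $D^M$ and $\HH{\phi}$ "commute with the cyclic $\sigma_0$-action" is not literally true on all of $B^M(C,C)$ (just as $\WH{d}$ does not commute with $\sigma_0$ on $B\OL{C}$ --- the wrap-around terms obstruct this); the correct statement, and the one you actually need, is that both operators preserve the subspace of cyclic invariants, which one verifies on generators $\sum_{\sigma}\pm\,\sigma(\UL{v}\otimes x_1\otimes\cdots\otimes x_k)$ by the orbit-counting argument: each arc through the marked module slot, paired with each cut outside that arc, occurs exactly once, so the image is again a full cyclicization. (Here the marked module element ensures the cyclic action has no stabilizers, so the identification of $C_\bullet(A,M)$ with the invariants is clean.) Second, the descent of $\HH{\phi}$ to cyclic invariants is the bimodule analogue of Lemma \ref{fe} and is nowhere stated in the paper, so in a complete write-up it must be recorded as a separate small lemma; as you note, this is exactly where the sign bookkeeping (the $\E_3$-type wrap-around sign, which the paper misprints as $\E_2$ in the definition of $\phi_*$) resides, and once it is done your argument closes without further computation.
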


More generally, let $A=(C,m)$, $A'=(C,m')$ be two filtered $\AI$-algebras, and let $\alpha:A \to A'$ be a 
filtered $\AI$-homomorphism. Let $M$ (resp. $M'$) be
an $\AI$-bimodule over $A$ (resp. over $A'$). For an $\AI$-bimodule homomorphism $\phi:M \to M'$ over $(\alpha,\alpha)$,  one can define a chain map
$\phi_*: C_{\bullet}(A,M) \to C_{\bullet}(A',M'):$
$$\phi_*(v \otimes x_1 \otimes \cdots \otimes  x_k) = $$
$$\sum_{\stackrel{0 \leq i, j \leq k}{i+j \leq k}} (-1)^{\E_2} \phi_{i,j}\big(x_{k-i+1},\cdots,x_k,v, x_1,\cdots,x_{j} \big) \otimes \HH{\alpha}(x_{j+1} \otimes \cdots 
\otimes x_{k-i}).$$

One can also obtain similar maps for $\LI$-case. Namely, let $\WT{A}$(resp. $\WT{A}'$) be an induced $\LI$-algebra from $A$(resp. $A'$), and
$\WT{f}$ an induced filtered $\LI$-homomorphism between $A$ and $A'$.  And consider
$M$(resp. $N$) as an induced $\LI$-module over $\WT{A}$(resp. $\WT{A}'$). The $\AI$-bimodule map $\phi$ induces an $\LI$-bimodule homomorphism between $M$ and $N$ over $\WT{f}$. One can also check that such a map is a chain map.

\begin{prop}\label{homotopyequiv}
Let $M$ and $N$ be  gapped filtered $\AI$-bimodules over an  gapped filtered $\AI$-algebra $A = (C,m)$. Let $\phi:M \to N$ be filtered or weakly filtered  gapped $\AI$-bimodule homomorphism, which is a homotopy equivalence.
Then the map $\phi$ induces an isomorphism between Hochschild homology of $M$ and $N$ over $A$ and
also $\phi$ induces an isomorphism between Chevalley-Eilenberg homology of $M$ and $N$ over $\WT{A}$.
$$ H_{\bullet}(A,M) \cong H_{\bullet}(A,N),\;\; H^{CE}_{\bullet}(\WT{A},M) \cong H^{CE}_{\bullet}(\WT{A},N)$$
\end{prop}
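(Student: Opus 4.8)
The cleanest route exploits that the hypothesis is genuinely a homotopy equivalence, not merely an $\OM_1$-quasi-isomorphism, so the plan is to argue by functoriality and homotopy invariance rather than by a filtration comparison. First I would record that $\phi \mapsto \phi_*$ is functorial: for composable (weakly) filtered $\AI$-bimodule homomorphisms one has $(\psi\circ\phi)_* = \psi_*\circ\phi_*$ and $\mathrm{id}_*=\mathrm{id}$, which is a direct check from the defining formula for $\phi_*$ together with the composition rule for $\HH{\alpha}$; the same formulas give a functorial chain map on the Chevalley--Eilenberg complexes built from the induced $\LI$-bimodule homomorphism. Granting the key lemma below, if $g:N\to M$ is a homotopy inverse of $\phi$, then $\phi_*\circ g_* = (\phi\circ g)_*$ and $g_*\circ\phi_* = (g\circ\phi)_*$; applying homotopy invariance to $\phi\circ g\simeq\mathrm{id}$ and $g\circ\phi\simeq\mathrm{id}$ shows that $\phi_*$ is an isomorphism with inverse $g_*$, simultaneously for $H_\bullet(A,-)$ and $H^{CE}_\bullet(\WT{A},-)$.

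The heart of the proof is therefore the lemma that a homotopy $\phi\simeq\psi$ between two (weakly) filtered $\AI$-bimodule homomorphisms $M\to M'$ induces a chain homotopy $\phi_*\simeq\psi_*$. I would build it from the model structure recalled in Section 2: the homotopy is a (weakly) filtered $\AI$-bimodule homomorphism $\Phi:M\to\FM$ over $(\FF,\FF)$ into a model $\FM$ of $[0,1]\times M'$, itself over a model $\FC$ of $[0,1]\times A$, with $Eval_{s=0}\circ\Phi=\phi$ and $Eval_{s=1}\circ\Phi=\psi$. By functoriality $\phi_* = (Eval_{s=0})_*\circ\Phi_*$ and $\psi_* = (Eval_{s=1})_*\circ\Phi_*$, so it remains to produce a chain homotopy between the two evaluation-induced maps $(Eval_{s=0})_*,(Eval_{s=1})_*: C_\bullet(\FC,\FM)\to C_\bullet(A,M')$. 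Using the explicit model of $[0,1]\times(-)$ from \cite{FOOO}, in which the extra coordinate contributes an $s$- and a $ds$-part and $Eval_{s=s_0}$ evaluates the $ds$-free part at $s=s_0$, the operator $H$ that extracts the $ds$-coefficient and integrates it over $s\in[0,1]$ should satisfy $d^{Hoch}H + Hd^{Hoch} = (Eval_{s=1})_* - (Eval_{s=0})_*$ up to sign; this is the usual Stokes/chain-homotopy identity transported through the Hochschild differential, and the same construction applies verbatim with $d^{CE}$ and $E_kC$. Convergence of $H$ in the energy-completed complexes of (\ref{def:hochchain}) is guaranteed by the gapped condition, since each $\Phi_{i,j}$ loses at most a fixed amount of energy.

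I expect the main difficulty to be exactly the weakly filtered case. Because a weakly filtered homomorphism satisfies only $\phi_*(\CF^\lambda)\subset\CF^{\lambda-c}$, the map $\phi_*$ is not a morphism of filtered complexes, so a naive Eilenberg--Moore comparison as in Proposition \ref{filteredhomo} does not apply and one really does need the algebraic-homotopy argument above, carried out over $\NOV$ (equivalently $\NOV^{(e)}$) where the bounded energy loss is harmless. The delicate points are (i) checking that the homotopy operator $H$, assembled from the weakly filtered components $\Phi_{i,j}$, converges and is $\NOV$-linear in the completed Hochschild and Chevalley--Eilenberg complexes, and (ii) verifying the sign bookkeeping in $d^{Hoch}H + Hd^{Hoch}=\psi_*-\phi_*$, where the Koszul signs $\E_1,\E_2,\E_3$ and the cyclic placement of the underlined module element interact with the $ds$-integration. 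As a cross-check, and to stay parallel with Proposition \ref{filteredhomo}, in the strictly filtered case one may instead run the energy-filtration spectral sequence and reduce by the Eilenberg--Moore comparison theorem to the $\CE_1$-page; there the energy-zero reduction has $\OM_0=0$, and a secondary number filtration as in (\ref{numberf}) reduces the claim to the isomorphisms induced by $\OL{\phi}_{0,0}$ on $\OL{n}_{0,0}$-homology and by $\OL{\alpha}_1$ on $\OM_1$-homology, both of which the homotopy-equivalence hypothesis supplies.
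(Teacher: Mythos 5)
Your overall skeleton coincides with the paper's: functoriality of $\phi\mapsto\phi_*$, reduction to ``$\AI$-homotopic bimodule homomorphisms induce the same map on homology,'' and the factorizations $\phi_*=(Eval_{s=0})_*\circ\Phi_*$, $\psi_*=(Eval_{s=1})_*\circ\Phi_*$ through the model. The gap is in your key lemma. The operator ``extract the $ds$-coefficient and integrate over $s\in[0,1]$'' is not even well defined on $C_\bullet(\FC,\FM)$: in a Hochschild chain $v\otimes x_1\otimes\cdots\otimes x_k$ every tensor factor independently carries an $s$- and a $ds$-part, so there is no single ``$ds$-coefficient.'' Any correct homotopy operator must be a sum over tensor positions, integrating one factor while evaluating the factors on one side at $s=0$ and on the other at $s=1$, and for that operator the identity $d^{Hoch}H+Hd^{Hoch}=(Eval_{s=1})_*-(Eval_{s=0})_*$ is a substantive computation against all terms of the differential in (\ref{eq2}) --- the higher $n_{i,j}$-insertions and the $m_0$-insertions --- not a verbatim Stokes argument; this is exactly a noncommutative Cartan-type formula that you would have to write down and verify. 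Moreover, the models of $[0,1]\times A$ and $[0,1]\times M'$ used in this paper are defined abstractly by the four properties listed in Section 2, not as an explicit $s$/$ds$ de Rham model, so before your formula even makes sense you would need to construct such a gapped filtered de Rham-type model over $\NOVO$ and show that the homotopy relation is independent of the choice of model.

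The paper sidesteps all of this: no chain homotopy is constructed at all. From $Eval_{s}\circ Incl=id$ and functoriality one gets $(Eval_s)_*\circ(Incl)_*=id$ on the chain level; the maps $Incl$ and $Eval_s$ are honestly filtered (never weakly filtered), gapped, concentrated in $(i,j)=(0,0)$, and $(Incl)_{0,0}$ is a quasi-isomorphism of $\OL{n}_{0,0}$-complexes by property (4) of the model, so the energy-filtration/Eilenberg--Moore argument of Proposition \ref{filteredhomo} applies to $Incl$ and shows $(Incl)_*$ is an isomorphism. Hence $(Eval_{s=0})_*=\bigl((Incl)_*\bigr)^{-1}=(Eval_{s=1})_*$ on homology, and composing with $\Phi_*$ gives $\phi_*=\psi_*$. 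The point you miss is that the weakly filtered maps $\Phi$ (and the homotopy inverse data) enter only through functoriality as chain maps, so their defective filtration never has to be confronted --- the isomorphism machinery is applied exclusively to the well-behaved maps $Incl$, $Eval$. Your closing ``cross-check'' (running Eilenberg--Moore directly on $\phi$) is fine in the strictly filtered case, but in the weakly filtered case your proposal rests entirely on the unproven homotopy identity above, which is precisely where it breaks.
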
 
\begin{proof}
The proof follows by considering the definition of homotopy via the models of $\AI$ or $\LI$-homotopy.
As the proof of $\LI$-case is exactly the same as that of $\AI$-case, we only consider $\AI$-case. 

From the definition of homotopy equivalence and the $\AI$-homotopy, it is enough to prove that
if $f,g:M \to N$ are filtered or weakly filtered $\AI$-bimodule homomorphisms over $A$, and if they
are $\AI$-homotopic to each other, then 
they induce the same map on Hochschild homology (i.e. $f_* = g_*$).

Denote by  $\FN$ the model of $[0,1] \times N$, and by $\FA$ the model of $[0,1] \times A$.
We will use the notation $Incl, Eval$ without distinction between two models $\FN$ and $\FA$, which should be clear from the context.

We denote by $H$ the $\AI$-homotopy  $H: M \to \FN$ which is a (weakly) filtered $\AI$-bimodule homomorphism over $(Incl, Incl)$
($\FN$ is an $\AI$-bimodule over $\FA$).
It satisfies the following commutative diagram:
\begin{equation}
\xymatrix{ \hspace{4cm}& N & \hspace{1cm} \\
M \ar[ur]^{f} \ar[r]^{H} \ar[dr]^{g} & \FN \ar[u]_{Eval_0} \ar[d]^{Eval_1} & \hspace{1cm}\\
\hspace{4cm}& N & \hspace{1cm}}
\end{equation}
Note that $Eval \circ Incl =id$ by the definition of the model, hence, the composition
$Eval_s \circ H: M \to N$ is an $\AI$-bimodule map over $(id,id)$ for $s=0,1$.

Since the induced maps are $f_* = (Eval_0)_* \circ H_*$ and  $g_* = (Eval_1)_* \circ H_*$, it is enough to 
show the composition $Eval_s \circ Incl$ induces an isomorphism on Hochschild homology as it implies that
$$g_* = (Eval_1)_* \circ H_*=  (Eval_0 \circ Incl)_* \circ (Eval_1)_* \circ H_* = 
(Eval_0)_* \circ  (Incl \circ Eval_1)_* \circ H_* = f_*.$$

Even though $H$ is only weakly filtered, the maps $Eval$, $Incl$ are filtered, and are of very simple forms
that $(Eval_s)_{i,j} =0$ and $(Incl_s)_{i,j}=0$ for $(i,j) \neq (0,0)$ and $s=0$ or $1$. We can use these good properties of $Eval$ and $Incl$ maps to prove the desired isomorphism property. Now the rest of the proof is very similar to that of the last section and we leave the details to the reader.
\end{proof} 
\subsection{Reduced Hochschild homology with $m_0$ terms}
It is well-known that for a unital $\AI$-algebra $A$ with $m_0 =0$, the Hochschild homology of a $\AI$-bimodule $M$ over $A$ can be computed using the reduced Hochschild chain complex. Similarly one considers in the filtered case, 
$$C_\bullet^{red}(A,M) = \HH{\oplus}_k 
M[1] \otimes \big(C/ (k \cdot I) \big)[1]^{\otimes k},$$
and it is easy to check that 
$$\HH{d}:C_\bullet^{red}(A,M) \to C_{\bullet+1}^{red}(A,M)$$ is well-defined and defines a complex whose homology is
called the reduced Hochschild homology $H_\bullet^{red}(A,M)$.

We prove that with the Novikov field coefficients, it is quasi-isomorphic to the standard Hochschild chain complex.
Instead of $\NOV$, we use the coefficients $\Lambda$ or $\NOVE$ which are Novikov fields defined
in the definition (\ref{def:lambda}) the (\ref{def:nove}). We consider $\NOVE$ only for simplicity.
We give a proof since the standard proof does not
generalize immediately due to the presence of $m_0$.
\begin{prop}\label{reducedhh}
Reduced Hochschild homology is isomorphic to the Hochschild homology
$H_\bullet(A,M)$ for filtered $\AI$-algebra $A$ with a strict unit $I$ (even with $m_0 \neq 0$).
\end{prop}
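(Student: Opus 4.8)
The plan is to exhibit the natural quotient map $\pi\colon C_\bullet(A,M)\to C_\bullet^{red}(A,M)$ as a filtered quasi-isomorphism via an energy spectral-sequence comparison, in exact parallel with the proof of Proposition \ref{filteredhomo}. Its kernel is the \emph{degenerate subcomplex} $D_\bullet(A,M)$ spanned by those tensors $\UL{v}\otimes x_1\otimes\cdots\otimes x_k$ having at least one algebra slot equal to the strict unit $I$; quotienting each factor $C[1]$ by $\NOVE\cdot I$ is the same as killing $D_\bullet$, so $C_\bullet^{red}(A,M)=C_\bullet(A,M)/D_\bullet(A,M)$. The first step is to check that $D_\bullet$ is a genuine subcomplex for $d^{Hoch}$: this follows from the unitality relations (\ref{unit}), which force $m_1(I)=0$ and $m_{\geq 3}(\ldots,I,\ldots)=0$ and leave only $m_2(I,\cdot)=\pm m_2(\cdot,I)=\mathrm{id}$, together with the standard cancellation of the two adjacent unit-contraction terms by the sign convention. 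Hence $d^{Hoch}$ sends degenerate tensors to degenerate tensors, and it suffices to prove that $\pi$ is a quasi-isomorphism.

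The reason a direct argument fails — and why the author flags that the standard proof does not generalize — is that the classical contracting homotopy on $D_\bullet$ is destroyed by the $m_0$-insertion term of $d^{Hoch}$, which drops a copy of $m_0(1)$ into arbitrary slots and interferes with the inserted unit. To bypass this I would filter all three complexes by energy, setting $\CF^n=F^{n\lambda_0}$ with $\lambda_0>0$ chosen from the gapped condition, so that $\pi$ becomes a filtration-preserving map. As in Proposition \ref{filteredhomo}, both $C_\bullet(A,M)$ and $C_\bullet^{red}(A,M)$ are energy completions of their underlying modules and are therefore complete and exhaustive with respect to $\CF$.

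Next I would identify the $\CE_1$-pages. Since $m_0(1)\in F^{\lambda'}C$ with $\lambda'>0$ by (\ref{menergy}), reduction modulo $\NOVO^+$ annihilates $m_0$, so the energy-zero part of $d^{Hoch}$ is precisely the classical Hochschild differential of the \emph{strictly unital} $\AI$-algebra $\OL{A}=(\OL{C},\OL{m})$ with $\OL{m}_0=0$ and coefficients in $\OL{M}$. Working over the Novikov field $\NOVE$ (or $\Lambda$) is what makes this clean: the strict unit spans a direct summand $\NOVE\cdot\OL{I}$, so that $\CE_1$ of $C_\bullet$ is the classical Hochschild complex of $\OL{A}$, $\CE_1$ of $C_\bullet^{red}$ is its normalized version, and $\pi$ restricts on $\CE_1$ to the classical normalization map. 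By the well-known normalization theorem for unital $\AI$-algebras without $m_0$ (\cite{GJ},\cite{KS},\cite{S}) this map is a quasi-isomorphism, so $\pi$ induces an isomorphism on the $\CE_2$-pages.

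Finally I would invoke the Eilenberg--Moore comparison theorem \cite{W} quoted above, with $r=2$: a filtration-preserving map of complete and exhaustive filtered complexes that is an isomorphism on $\CE_2$ is automatically a quasi-isomorphism. The crucial feature is that this conclusion requires no convergence of the spectral sequences — which is unavailable here, since $d_r\neq 0$ for arbitrarily large $r$ as in Proposition \ref{filteredhomo} — and thus yields $H_\bullet(A,M)\cong H_\bullet^{red}(A,M)$. The main obstacle throughout is exactly the $m_0$-insertion term, which prevents a naive contracting homotopy on $D_\bullet$; the device of passing to the energy associated graded, where $\OL{m}_0=0$ collapses everything to the classical case, combined with the convergence-free comparison theorem and the use of a Novikov field to split off the unit, is what makes the argument succeed.
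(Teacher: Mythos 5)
Your proposal is correct in substance, but it takes a genuinely different route from the paper's own proof. The common ground is the identification of the degenerate subcomplex $D_\bullet=\ker\pi$ and the check that it is $d^{Hoch}$-stable (for the terms wrapping around the module element you implicitly also need unitality of the bimodule operations $n_{i,j}$, the same tacit assumption made in the classical case and in the paper). After that the strategies diverge: you never touch $D_\bullet$ again, and instead compare $C_\bullet(A,M)$ with $C_\bullet^{red}(A,M)$ through the energy filtration, identifying the $\CE_1$-page with the classical ($\OL{m}_0=0$) Hochschild complex of $(\OL{A},\OL{M})$, quoting the classical normalization theorem there, and closing with the Eilenberg--Moore comparison theorem exactly as in Proposition \ref{filteredhomo}. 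The paper, by contrast, never reduces to the classical case: it proves acyclicity of $D_\bullet$ directly by repairing Loday's contracting homotopy in the presence of $m_0$. It introduces the maps $t_i=d^{Hoch}_0\circ s_i$ alongside the unit insertions $s_i$, filters $D_\bullet$ by the insertion position $p$ (the $t_i$'s are precisely what makes this position filtration $d^{Hoch}$-stable despite the $m_0$-insertion terms), and exhibits an explicit homotopy $\alpha_p$ on each $Gr_p D_\bullet$ using the splitting $d^{Hoch}=d^{Hoch}_0+d^{Hoch}_+$. Your argument is shorter and recycles machinery already set up in section 3, but it costs two things: (i) it requires the gapped hypothesis to make the energy filtration discrete and its associated graded classical --- an assumption not stated in Proposition \ref{reducedhh} (though satisfied by all the Floer-theoretic algebras in the paper) and not needed in the paper's proof, whose only spectral sequence runs over the position filtration; and (ii) it consumes the classical $\AI$-normalization theorem as a black box. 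What the paper's more laborious route buys is self-containedness and, more importantly, the explicit mechanism by which $m_0$ deforms the classical homotopy; this is the same phenomenon behind the modified contraction $\tilde{s}$ and the extra terms in the Connes operator $B=(1-t)\tilde{s}N$ in section 5, where the normalized complex produced here is then used to recover the standard $B$-operator.
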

\begin{proof}
We modify the proof given in the book of Loday\cite{L} section 1.6.
Define $s_i: C_\bullet(A,M) \to C_{\bullet-1}(A,M)$ by
$$s_i(v \otimes a_1 \otimes \cdots \otimes a_k) = (-1)^{|v|' +  \cdots + |a_i|'}  v \otimes \cdots \otimes a_i \otimes I \otimes a_{i+1}
\otimes \cdots \otimes a_k,$$
and define $t_i: C_\bullet(A,M) \to C_{\bullet}(A,M)$ by
$$t_i(v \otimes a_1 \otimes \cdots \otimes a_k) = (-1)^{|v|'} v \otimes \HH{m}_0 ( a_1 \otimes \cdots \otimes a_i \otimes I \otimes a_{i+1}
\otimes \cdots \otimes a_k).$$
Here, we set $s_i = t_i =0$ if $i<0$ or $i > k$.

The maps $t_i$'s are introduced to make the filtration below compatible with the Hochschild differential.
Note that for short we may write 
\begin{equation}\label{eq:ti}
t_i= d^{Hoch}_0 \circ s_i,
\end{equation}
where $d^{Hoch}_0$ denotes the 2nd term of the definition of $d^{Hoch}$ in (\ref{eq2}).
Let $D_\bullet$ be a submodule of $C_\bullet(A,M)$  which is the completion of the submodule generated by the images of  the maps $\{s_i \}_{i\in \NN \cup \{0\}}, \{t_i\}_{i\in \NN \cup \{0\}}$ or
equivalently by the images of 
$\{s_i \}_{i\in \NN \cup \{0\}}$.

One  check easily that $(D_\bullet,d^{Hoch})$ is a subcomplex of
$(C_\bullet(A,M),d^{Hoch})$. 

\begin{lemma}
$(D_\bullet,d^{Hoch})$ is acyclic subcomplex.
\end{lemma}
\begin{proof}
We prove this by introducing the following filtration:
Consider a filtration  $F_pD_\bullet$ which is generated by the  
images of  $s_0,\cdots,s_p,t_0,\cdots,t_p$. One can check that the filtration is compatible with $d^{Hoch}$.

Then, by the spectral
sequence argument, it is enough to show that $Gr_p D_\bullet$ is acyclic for any $p$, which will be shown in
the next lemma.
Here, even though $F_pD_\bullet$ is not exhaustive filtration as we have used completion, but the filtration is complete.
And in this case, acyclicity for each $p$ implies that the spectral sequence is weakly convergent and hence
proves the acyclic property of the subcomplex $(D_\bullet,d^{Hoch})$(see \cite{W}).
\end{proof}

\begin{lemma}
$Gr_p D_\bullet$ is acyclic for any $p$. More precisely, we have a chain homotopy $\alpha_p$ 
between identity and zero map: i.e they satisfies the identities
$$(d^{Hoch} \circ \alpha_p + \alpha_p \circ d^{Hoch} ) \circ s_p = s_p \;\;\; \textrm{mod}\;\; F_{p-1},$$
$$(d^{Hoch} \circ \alpha_p + \alpha_p \circ d^{Hoch} ) \circ t_p = t_p \;\;\; \textrm{mod}\;\; F_{p-1}$$
\end{lemma}
\begin{proof}
We define a chain homotopy $\alpha_p:Gr_p D_\bullet \to Gr_p D_{\bullet-1}$ as follows.
$$\alpha_p (s_p ( v \otimes a_1 \otimes \cdots \otimes a_k)) = s_p \circ s_p ( v \otimes a_1 \otimes \cdots \otimes a_k),$$
$$\alpha_p (t_p(v \otimes a_1 \otimes \cdots \otimes a_k)) = 
-t_p(s_p(v \otimes a_1 \otimes \cdots \otimes a_k)).$$
From (\ref{eq:ti}), we may also
write 
$$\alpha_p \circ t_p = -  d^{Hoch}_0 \circ s_p \circ s_p.$$

We write $$d^{Hoch} = d^{Hoch}_0 + d^{Hoch}_+.$$
One can check as in the standard case (although complicated)
$$(d^{Hoch}_+ \circ \alpha_p + \alpha_p \circ d^{Hoch}_+ ) \circ s_p = s_p \;\;\; \textrm{mod}\;\; F_{p-1},$$
$$(d^{Hoch}_+ \circ \alpha_p + \alpha_p \circ d^{Hoch}_+ ) \circ t_p = t_p \;\;\; \textrm{mod}\;\; F_{p-1}.$$

Now, we check the same identity for $d_0^{Hoch}$.
Note that 
$$(d^{Hoch}_0 \circ \alpha_p + \alpha_p \circ d^{Hoch}_0 ) \circ s_p =  d^{Hoch}_0 \circ s_p \circ s_p - d^{Hoch}_0 \circ s_p  \circ s_p =0.$$
$$(d^{Hoch}_0 \circ \alpha_p + \alpha_p \circ d^{Hoch}_0 ) \circ t_p =  -d^{Hoch}_0 \circ d^{Hoch}_0 \circ s_p \circ s_p + \alpha_p ( d^{Hoch}_0 \circ  d^{Hoch}_0 \circ s_p )=0,$$
as $d^{Hoch}_0 \circ  d^{Hoch}_0 =0$.
This proves the lemma.

\end{proof}
Hence $D_\bullet$ is acyclic, and the quotient complex is $C_\bullet^{red}(A,M)$.
\end{proof}

We remark that this reduced version should be helpful for computations of Hochschild homology for weakly obstructed Lagrangian submanifolds
as one can then ignore Hochschild boundary operation comming from $m_0$ by using the reduced version.

\section{Cyclic and cyclic Chevalley-Eilenberg homology}
We first define cyclic homology of $\AI$-algebra and cyclic Chevalley-Eilenberg homology of an induced $\LI$-algebra. Moreover, as there
are several approaches to define cyclic homology, we show how the standard approaches work out with $m_0 \neq 0$. It turns out that there
are some modifications to be made due to the presence of $m_0$. We refer readers to to the book by Loday \cite{L} in
the standard case of associative and Lie algebras and to the paper by Hamilton and Lazarev \cite{HL}
for an approach using non-commutative de Rham theory for these homology theories in the case $m_0=0$, and 

Let $A=(C,\{m_*\})$ be a filtered $\AI$-algebra, and let $\WT{A}$ be an induced $\LI$-algebra.
First, consider the subcomplexes of the bar complex $(\HH{B}^{cyc}C,\HH{d})$ and $(\HH{E}C,\HH{d})$
introduced in section 2. 
\begin{definition}\label{def:cycho}
We define the {\em cyclic homology} of an $\AI$-algebra  to be the homology of the complex $(\HH{B}^{cyc}_{\geq 1}C,\HH{d})$,
and denote it as $HC_\bullet(A)$.
We define the {\em cyclic Chevalley-Eilenberg homology} of an $\LI$-algebra $\WT{A}$ to be 
the homology of the complex $(\HH{E}_{\geq 1}C,\HH{d})$ and denote it as $HC^{CE}_\bullet(\WT{A})$.
\end{definition}
\begin{remark}
One may define cyclic homology using (\ref{eq:quo}) instead
as the usual definition for cyclic homology is given by considering the quotient as in (\ref{eq:quo}) of Connes' complex. But in our
case, the resulting homologies are isomorphic. 
\end{remark}

In fact  cyclic Chevalley-Eilenberg homology for $\LI$-algebra is just a Chevalley-Eilenberg homology with trivial coefficient. The reason
that it is called cyclic Chevalley-Eilenberg homology is that  there is a uniform approach for $\AI$, $\LI$ and $C_\infty$-algebras
to define Hochschild and cyclic homology theories via considering formal manifolds and their non-commutative de Rham theory. We refer readers to \cite{HL} for more detailed explanations and references.

\subsection{Cyclic bicomplex and Connes complex}
We show that the standard connections between several approach to define cyclic homology holds true with $m_0 \neq 0$.
The bicomplex for cyclic homology was introduced by B. Tsygan, and we can consider an analogous bi-complex for filtered $\AI$-algebras.
(See \cite{L} for the classical case and we assume that the reader is familiar with the construction in \cite{L}.)
Consider the Hochschild chain complex $C_\bullet(A,A)$ defined in (\ref{def:hochchain}).
For the cyclic generator $t_{n+1} \in \ZZ/(n+1)\ZZ$, we define its action on $A^{\otimes (n+1)}$  as in (\ref{gaction}):
$$t_{n+1} \cdot (x_0,x_1,\cdots,x_n) = (-1)^{|x_n|'(|x_0|'+\cdots+|x_{n-1}|')} (x_n,x_0,\cdots,x_{n-1}).$$
Here, we set $t_1$ to be identity on $A$ and write the identity map as $1$. Consider
$N_{n+1}:=1+t_{n+1}+t_{n+1}^2+ \cdots+ t_{n+1}^n$. 

As in the classical case,
we have the natural augmented exact sequence:
$$A^{\otimes (n+1)} \stackrel{1-t_{n+1}}{\longleftarrow} A^{\otimes (n+1)}  \stackrel{N_{n+1}}{\longleftarrow}  A^{\otimes (n+1)}  \stackrel{1-t_{n+1}}{\longleftarrow}  A^{\otimes (n+1)}  \stackrel{N_{n+1}}{\longleftarrow} \cdots .$$   
We consider $\oplus_{n=1}^\infty N_n$ action on $\oplus_{n=1}^\infty A^{\otimes n}$ and denote it as
\begin{equation}\label{symop}
N:C_\bullet(A,A) \mapsto C_\bullet(A,A).
\end{equation}
We can also similarly define $(1-t):C_\bullet(A,A) \mapsto C_\bullet(A,A)$.

Recall that in the classical case, cyclic bicomplex has even columns which are the copies of the Hochschild complex, and
odd columns which are the copies of the bar complex. We will construct the bicomplex in the similar way:
even columns will be given by $(C_\bullet(A,A),d^{Hoch})$. For odd columns, note that $\HH{B}C = \NOVO \oplus C_\bullet(A,A)$ as
$B_0C=\NOVO$ is not present in the Hochschild chains. Consider $\HH{d}$ operation on $C_\bullet(A,A)$ considered as
a subspace of $\HH{B}C$. Due to the lemma \ref{bartrivial2}, the homology of the chain complex $(C_\bullet(A,A),\HH{d})$ vanishes,
and this will be the odd columns.

These two differentials are certainly different.
For example, given any $x \in C$, we have $$d^{Hoch}(x) = (-1)^{|x|'}x \otimes m_0 + m_1(x) $$ whereas 
$$\HH{d}(x)=  m_0 \otimes x + (-1)^{|x|'}x \otimes m_0 + m_1(x).$$

To follow the standard notation, we set $b = d^{Hoch}$ and $b'=\HH{d}$. 
\begin{lemma}
We have on $C_\bullet(A,A)$ the following identities:
\begin{equation}
b (1-t) = (1-t) b', \;\;\; b'N=Nb .
\end{equation}
\end{lemma}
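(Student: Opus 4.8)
The plan is to verify both identities by a direct sign-chasing computation on a homogeneous generator $\underline{a_0}\otimes a_1\otimes\cdots\otimes a_n$, following the template of the classical (associative) proof in Loday's book but adapted to the $\AI$-setting and, crucially, tracking the new $m_0$-insertion terms. I would first organize each of $b=d^{Hoch}$ and $b'=\HH{d}$ into a sum of elementary operators grouped by type: (a) \emph{internal} operators, applying some $m_j$ to a consecutive block of the $a_i$'s that does not touch the marked element $\underline{a_0}$; (b) the \emph{wrap-around} (cyclic) operator of $b$, which applies $m_{i+j+1}=n_{i,j}$ to a block straddling the seam as in the third line of (\ref{eq2}), versus the \emph{left boundary} operators of $b'$ that apply an $m_j$ to a block beginning at $a_0$; and (c) the \emph{$m_0$-insertion} operators $S_p$, which splice $m_0(1)$ into the $p$-th gap. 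The only structural difference between $b$ and $b'$ is that $b'$ inserts $m_0$ into all $n+2$ gaps (including the gap \emph{before} $\underline{a_0}$) and treats $a_0$ as an ordinary tensor factor, whereas $b=d^{Hoch}$ omits the insertion before $\underline{a_0}$ and replaces the seam-crossing multiplications by the single wrap-around term.

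Second, I would record the elementary commutation relations between the cyclic generator $t=t_{n+1}$ of (\ref{symop}) and each of the operators above, with signs dictated by the Koszul rule (\ref{signperm}). The essential facts are that $t$ intertwines the internal operator sitting in one gap with the one in the adjacent gap up to a sign, that $t$ carries the left boundary multiplication of $b'$ (the $m_j$ beginning at $a_0$) to the wrap-around operator $n_{i,j}$ of $b$, and that $t_{n+1}^{n+1}=\mathrm{id}$ since (\ref{gaction}) is a genuine graded group action. Granting these, the two identities follow by the standard telescoping argument: forming $(1-t)b'$ collapses the chain of internal and boundary faces so that exactly the faces of $b(1-t)$ survive, and summing against $N=1+t+\cdots+t^n$ converts the boundary faces back into the linear faces, giving $b'N=Nb$. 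All sums are manipulated termwise, which is legitimate because the energy/gapped filtration together with the completion forces convergence, exactly as in Lemma \ref{bartrivial2}.

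The main obstacle, and the only genuinely new point relative to the classical case, is the bookkeeping of the $m_0$ terms and their signs. One must show that the asymmetry created by the missing insertion before $\underline{a_0}$ in $b$ is precisely compensated under $1-t$ and $N$: the term that $t$ produces from the gap-$0$ insertion $S_0$ present in $b'$ must cancel the discrepancy in the remaining insertions. This cancellation hinges on the degree $|m_0(1)|'=1$, which makes $t$ act on an inserted $m_0(1)$ with a predictable sign; the length-one check $b(1-t)x=(1-t)b'x$, where $b'x-bx=m_0\otimes x$, already exhibits this mechanism and shows that no correction term survives at this level. I expect the delicate part to be verifying that all these signs, including the graded contributions of the module factor $a_0$ itself, match simultaneously across every gap and every wrap-around; once the $m_0$-gaps are correctly paired, the remaining terms reduce to the standard identities and the verification closes.
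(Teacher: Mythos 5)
The paper offers no proof of this lemma at all: it is stated bare, immediately after the definitions of $b=d^{Hoch}$ and $b'=\HH{d}$, as the evident analogue of the classical identities in Loday, so there is no argument of the paper's to compare yours against. Your proposal correctly supplies the omitted verification along exactly the lines the paper implicitly intends: the decomposition into internal, boundary/wrap-around, and $m_0$-insertion faces, the commutation relations with $t$ (generalizing the classical $d_it=-td_{i-1}$ and $d_0t=(-1)^nd_n$), the use of $t_{n+1}^{n+1}=\mathrm{id}$, and the telescoping are all sound, and you correctly isolate the only genuinely new point --- that the gap-$0$ insertion present in $b'$ but absent from $b$ is paired off under $t$ (using $|m_0(1)|'=1$) against the remaining insertions, leaving precisely the discrepancy $b-b'$, as your length-one computation already exhibits.
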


We thus obtain the cyclic bi-complex (analogous to Tsygan's) defined as follows.
\begin{definition}\label{def:cycbi}
Define $$CC_{pq}(A) = C_q(A,A) \;\;\textrm{for all} \; p\geq 0, q \in \ZZ.$$
We define differentials as
$$b:CC_{pq}(A) \mapsto CC_{p(q+1)}(A) \;\; \textrm{for} \; p \;\textrm{even}$$
$$-b':CC_{pq}(A) \mapsto CC_{p(q+1)}(A) \;\; \textrm{for} \; p \;\textrm{odd}$$ 
$$1-t:CC_{pq}(A) \mapsto CC_{(p-1)q}(A) \;\; \textrm{for} \; p \;\textrm{odd}$$
$$N:CC_{pq}(A) \mapsto CC_{(p-1)q}(A) \;\; \textrm{for} \; p \;\textrm{even}$$

\begin{equation*}
\xymatrix{ \hspace{1cm} & \hspace{1cm} & \hspace{1cm} & \hspace{1cm} & \hspace{1cm} \\
C_{1}(A,A) \ar[u]_{b}  &  C_{1}(A,A) \ar[u]_{-b'} \ar[l]_{1-t} & C_{1}(A,A) \ar[u]_{b} \ar[l]_{N} &  C_{1}(A,A) \ar[u]_{-b'} \ar[l]_{1-t} 
& \; \ar[l]_{N} \\
C_{0}(A,A) \ar[u]_{b}  &  C_{0}(A,A) \ar[u]_{-b'} \ar[l]_{1-t} & C_{0}(A,A) \ar[u]_{b} \ar[l]_{N}  
 & C_{0}(A,A) \ar[u]_{-b'} \ar[l]_{1-t} & \; \ar[l]_{N} \\
C_{-1}(A,A) \ar[u]_{b}  &  C_{-1}(A,A) \ar[u]_{-b'} \ar[l]_{1-t} & C_{-1}(A,A) \ar[u]_{b} \ar[l]_{N} 
 & C_{-1}(A,A) \ar[u]_{-b'} \ar[l]_{1-t} & \; \ar[l]_{N}  \\
\hspace{1cm} \ar[u]_{b}  &  \hspace{1cm} \ar[u]_{-b'} & \hspace{1cm} \ar[u]_{b} &  \hspace{1cm} \ar[u]_{-b'} &  \hspace{1cm} }
\end{equation*}
\end{definition}
For example, for associative algebras (whose degree is concentrated at zero), the standard bicomplex of cyclic homology can be seen in the $4$-th quadrant. All elements have degree $-1$ after degree shifting, hence the negative of the length gives the degree of an expression.

\begin{prop}\label{thm:cycbarcyc}
The homology of the above (completed) total complex $\HH{\textrm{Tot}}(CC(A))$ is isomorphic to
cyclic homology $HC_\bullet(A)$.
\end{prop}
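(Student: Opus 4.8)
The plan is to run the classical identification of the total complex of Tsygan's cyclic bicomplex with Connes' cyclic complex (as in \cite{L}), and then to reconcile Connes' complex with the definition of $HC_\bullet(A)$ as the homology of the cyclic-invariant subcomplex $(\HH{B}^{cyc}_{\geq 1}C,\HH{d})$ in Definition \ref{def:cycho}. The observation that lets the classical argument survive the presence of $m_0$ is that the two horizontal differentials $1-t$ and $N$ of $CC(A)$ are built only from the cyclic group action and involve none of the $m_k$; only the vertical differentials $b=d^{Hoch}$ and $b'=\HH{d}$ see $m_0$. Hence the rows of $CC(A)$ are exactly the classical periodicity complexes, and all the $m_0$-dependence is pushed into a single induced vertical differential at the end.

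First I would filter $\HH{\textrm{Tot}}(CC(A))$ by rows, so that the associated graded differential is the horizontal one and the $E_0$-row at height $q$ is the periodicity complex
$$\cdots \xrightarrow{N} C_q(A,A) \xrightarrow{1-t} C_q(A,A) \xrightarrow{N} C_q(A,A) \xrightarrow{1-t} C_q(A,A).$$
Since the Novikov coefficients contain $\QQ$ (indeed $\RR$), the order $n+1$ of each cyclic group is invertible, so by a Maschke-type averaging the cyclic group algebra is semisimple and this periodicity complex is acyclic in all columns $p\geq 1$, with homology the coinvariants $C_q(A,A)/(1-t)=C_q^{\lambda}$ in column $p=0$. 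Thus on the $E_1$-page the spectral sequence is concentrated in the zeroth column and carries the induced vertical differential $\bar b$, which is well-defined because $b(1-t)=(1-t)b'$ forces $b$ to preserve $\mathrm{im}(1-t)$. The spectral sequence therefore collapses and identifies $H_\bullet(\HH{\textrm{Tot}}(CC(A)))$ with the homology of Connes' complex $(C_\bullet^{\lambda},\bar b)$.

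It then remains to match Connes' homology with $HC_\bullet(A)$. For this I would use the norm map $N\colon (C_\bullet(A,A),b)\to (C_\bullet(A,A),b')$, which is a chain map by the identity $b'N=Nb$ of the preceding lemma. Because $\QQ$ lies in the coefficients, in each tensor length $N$ annihilates $\mathrm{im}(1-t)=\ker N$ and maps isomorphically onto the cyclic invariants $\ker(1-t)=\mathrm{im}N=\HH{B}^{cyc}_{\geq 1}C$ (on invariants $N$ is multiplication by $n+1$). Hence $N$ descends to an isomorphism of complexes $(C_\bullet^{\lambda},\bar b)\xrightarrow{\sim}(\HH{B}^{cyc}_{\geq 1}C,\,b'=\HH{d})$, and passing to homology yields $H_\bullet(\HH{\textrm{Tot}}(CC(A)))\cong HC_\bullet(A)$.

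The step I expect to be the main obstacle is convergence of the row spectral sequence in the completed setting: because $\HH{\textrm{Tot}}$ is completed with respect to the energy filtration, the row filtration is complete but need not be exhaustive, so collapse at the $E_2$-stage does not by itself yield the isomorphism on homology. As in the reduced-Hochschild argument earlier, I would combine the columnwise acyclicity $\CE_1^{p,q}=0$ for $p\geq 1$ with the complete convergence theorem of \cite{W} to obtain weak convergence, and I would verify that the averaging homotopy and the norm isomorphism are compatible with the energy filtration, so that every identification above survives completion.
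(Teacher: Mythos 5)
Your proposal is correct and follows essentially the same route as the paper: filter the completed bicomplex by its rows, use characteristic-zero acyclicity of the cyclic-group rows to reduce to Connes' complex $C^\lambda_\bullet(A)$, handle convergence with the complete-convergence/comparison machinery already used in earlier sections, and finally identify coinvariants with the invariant subcomplex $(\HH{B}^{cyc}_{\geq 1}C,\HH{d})$ --- your explicit norm-map isomorphism is exactly the ``standard invariant and coinvariant relation'' that the paper's proof invokes, and your Maschke averaging is its unstated reason why the rows are acyclic augmented complexes. The only (harmless) imprecision is the claim that the row filtration is ``complete but need not be exhaustive'': since $p\geq 0$, every element of total degree $n$ has all components in rows $\leq n$, so the filtration is exhaustive and the genuine difficulty is that it is unbounded below, which is precisely why the complete convergence theorem you cite is the right tool.
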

\begin{proof}
In the standard case, there exists an isomorphism of the homology of the total complex of the bicomplex, and 
the homology of Connes' complex. 
Recall that Connes complex is defined as
\begin{equation}\label{eq:quo}
C^\lambda_\bullet(A):=coker(1-t) = C_\bullet(A,A) / im(1-t).
\end{equation}
It is easy to check that this complex has the same homology as cyclic homology defined in the definition \ref{def:cycho}
(which is the standard invariant and coinvariant relation).

Consider a natural surjection $p:\HH{Tot}(CC_\bullet)(A) \to C^\lambda_\bullet(A)$, where the quotient map is given from the first column. Recall that the rows of the bicomplex are acyclic augmented complexes with $H_0 = C^\lambda_\bullet(A)$.
Consider the standard horizontal increasing filtrations on $CC_\bullet(A)$ and $C^\lambda_\bullet(A)$, and use the spectral sequence arguments 
as in the classical case or as in  the last section to prove the proposition.
\end{proof}

As usual, there exist the Connes exact sequence, relating Hochschild homology and cyclic homology.
\begin{lemma}\label{conexact}
We have a following exact sequence.
$$ \to H_{\bullet}(A,A) \to HC_{\bullet}(A) \to HC_{\bullet +2}(A) \to H_{\bullet +1}(A,A) \to $$
\end{lemma}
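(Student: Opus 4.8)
The plan is to establish the Connes exact sequence (Lemma \ref{conexact}) by mimicking the classical derivation from the cyclic bicomplex $CC_\bullet(A)$, while keeping careful track of the fact that $b'=\HH{d}$ now carries an extra $m_0$-term compared to the usual bar differential. The starting point is Proposition \ref{thm:cycbarcyc}, which identifies $HC_\bullet(A)$ with the homology of the completed total complex $\HH{\textrm{Tot}}(CC(A))$. First I would consider the subcomplex $CC^{\{0,1\}}(A)$ consisting of only the first two columns of the bicomplex (the $b$-column and the $-b'$-column connected by $1-t$). Since the odd columns are the complexes $(C_\bullet(A,A),\HH{d})=(C_\bullet(A,A),b')$, and by Lemma \ref{bartrivial2} this bar-type complex is acyclic (its homology reduces to the $\NOVO$ generated by $1$, which sits outside $C_\bullet(A,A)$), each odd column is acyclic. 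Hence the inclusion of the first-column complex $(C_\bullet(A,A),b)$ into the double subcomplex computing the first two columns, taken together with the cone structure, recovers Hochschild homology $H_\bullet(A,A)$ in the appropriate degrees.

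The key structural observation is the standard one: deleting the first two columns of $CC(A)$ shifts the bicomplex two units to the left and up, so there is a short exact sequence of complexes
\begin{equation}
0 \to \HH{\textrm{Tot}}(CC^{\{0,1\}}(A)) \to \HH{\textrm{Tot}}(CC(A)) \to \HH{\textrm{Tot}}(CC(A))[2] \to 0,
\end{equation}
where $[2]$ denotes the degree shift corresponding to the removed pair of columns. The homology of the left term is $H_\bullet(A,A)$ — this is exactly the point where I must verify that acyclicity of the $b'$-columns, which in the $m_0=0$ case is the classical contracting homotopy $s$, still holds here; the correct statement is Lemma \ref{bartrivial2}, so this is already available. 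The homology of the middle and right terms is $HC_\bullet(A)$ and $HC_{\bullet+2}(A)$ by Proposition \ref{thm:cycbarcyc}. Applying the long exact sequence in homology to this short exact sequence then yields
$$ \to H_{\bullet}(A,A) \to HC_{\bullet}(A) \to HC_{\bullet +2}(A) \to H_{\bullet +1}(A,A) \to, $$
which is the desired Connes sequence.

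The step I expect to be the main obstacle is the convergence and well-definedness of these arguments in the \emph{completed, filtered} setting. Because of completion with respect to energy filtration, the total complex is an infinite product rather than a sum, and the acyclicity of the $b'$-columns is not literally the contracting homotopy $s$ (which fails when $m_0 \neq 0$, as noted before Lemma \ref{bartrivial2}); instead I must invoke the energy-filtration spectral sequence machinery already used in Proposition \ref{filteredhomo} and in Lemma \ref{bartrivial2} to justify that the columns are acyclic after completion and that the resulting long exact sequence is valid. Concretely, I would filter all three complexes by the energy filtration $\CF^n$, observe that the short exact sequence respects the filtration, and check that on the associated graded level (where $b'$ reduces to the energy-zero bar differential $\WH{d}$) the classical acyclicity holds; then a complete-convergence argument in the sense of the Eilenberg--Moore comparison framework cited earlier transfers this to the completed complexes. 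The remaining identifications of signs and degree shifts in $[2]$ are routine and follow exactly the classical pattern in Loday \cite{L}, so I would not grind through them.
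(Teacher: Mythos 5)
Your proof is correct and follows exactly the route the paper intends: the paper states this lemma without proof, as the standard consequence of the bicomplex machinery set up just before it --- Proposition \ref{thm:cycbarcyc} together with the acyclicity of the odd ($b'$) columns, which the paper deduces from Lemma \ref{bartrivial2} --- and this is precisely the two-column short exact sequence argument you give. Your added care about the completed, filtered setting (replacing the classical contracting homotopy, which fails when $m_0 \neq 0$, by the energy-filtration and Eilenberg--Moore comparison arguments) addresses the only point where the classical derivation from Loday needs modification here.
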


\subsection{$(b,B)$-complex}
There is also the $(b,B)$-complex, which is obtained from the bicomplex using the acyclicity of the even columns.
We will see that the contraction homotopy of the bar complex in the standard case does not work for filtered $A_\infty$ algebras with $m_0\neq 0$.
We first find a modified contraction homotopy, and we will also discuss normalized $(b,B)$-complex.
For this, we need to work on Novikov ring which is a field. 
Instead of $\NOV$, we can use the coefficients $\Lambda$ or $\NOVE$ which are Novikov fields defined
in the definition (\ref{def:lambda}) the (\ref{def:nove}). We consider $\NOVE$ only for simplicity.
\begin{lemma}\label{lem:contho}
Consider $C_\bullet(A,A)$ with $\NOVE$ coefficient.
There exist a contracting homotopy $\tilde{s}$ of the complex $(C_\bullet(A,A), \HH{d})$ for a filtered $\AI$-algebra $A$.
\end{lemma}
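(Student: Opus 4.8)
The plan is to begin from the classical contracting homotopy of the bar complex and then correct it by a convergent geometric series that absorbs the error introduced by $m_0$. Concretely, I would first try the homotopy of Lemma~\ref{bartrivial}, namely $s\colon C_\bullet(A,A)\to C_\bullet(A,A)$ given by $s(x_1\otimes\cdots\otimes x_n)=I\otimes x_1\otimes\cdots\otimes x_n$, where $I$ is a strict unit; after passing to a strictly unital canonical model we may assume such $I$ exists. Since $s$ merely prepends one tensor factor it preserves positive length and is well defined on the energy completion over $\NOVE$. The point is that $s$ is no longer a contracting homotopy once $m_0\neq0$, so the real task is to pin down exactly how it fails.

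The key step is therefore to compute $\HH{d}\circ s+s\circ\HH{d}$ on $w=x_1\otimes\cdots\otimes x_n$ by expanding $\HH{d}=\sum_{k\ge0}\HH{m}_k$ and comparing $\HH{d}(I\otimes w)$ with $I\otimes\HH{d}(w)$. The unit axioms \eqref{unit} give $m_1(I)=0$ and $m_k(I,x_1,\dots,x_{k-1})=0$ for $k\ge3$, while $m_2(I,x_1)=x_1$ contributes exactly $w$. I expect every remaining term in which some $\HH{m}_k$ with $k\ge1$, or an $m_0$-insertion, acts \emph{inside} the block $w$ to cancel between the two summands: carrying the operation past the extra front factor $I$ produces the sign $(-1)^{|I|'}=-1$, so these terms occur with opposite signs in $\HH{d}\circ s$ and $s\circ\HH{d}$. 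Similarly, the $m_0$-insertion placed between $I$ and $x_1$ in $\HH{d}(I\otimes w)$ cancels the term $I\otimes m_0(1)\otimes x_1\otimes\cdots$ coming from $s\circ\HH{d}$. The only insertion left without a partner is the one putting $m_0(1)$ in front of $I$, and I thus expect to arrive at
\begin{equation}
\HH{d}\circ s+s\circ\HH{d}=\mathrm{id}+T,\qquad T(w):=m_0(1)\otimes I\otimes w .
\end{equation}

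With this identity the rest is formal. Because $m_0(1)\in F^{\lambda'}C[1]$ with $\lambda'>0$ by \eqref{menergy}, the operator $T$ strictly raises the energy filtration, so on the completed complex the series $(\mathrm{id}+T)^{-1}=\sum_{j\ge0}(-1)^{j}T^{j}$ converges and is invertible over $\NOVE$. Moreover $\HH{d}$ commutes with $\HH{d}\circ s+s\circ\HH{d}=\mathrm{id}+T$, since $\HH{d}^2=0$ forces $\HH{d}(\HH{d}s+s\HH{d})=\HH{d}s\HH{d}=(\HH{d}s+s\HH{d})\HH{d}$, and hence $\HH{d}$ commutes with $(\mathrm{id}+T)^{-1}$. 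Setting $\tilde s:=(\mathrm{id}+T)^{-1}\circ s$ then gives
\begin{equation}
\HH{d}\circ\tilde s+\tilde s\circ\HH{d}=(\mathrm{id}+T)^{-1}\bigl(\HH{d}\circ s+s\circ\HH{d}\bigr)=(\mathrm{id}+T)^{-1}(\mathrm{id}+T)=\mathrm{id},
\end{equation}
so $\tilde s$ is the required contracting homotopy. The main obstacle is the sign bookkeeping in the middle step: one must check carefully that all interior contributions cancel and that the sole surviving $m_0$-term is the front insertion, because an error there would change $T$ and, downstream, alter the extra term that the presence of $m_0$ contributes to the Connes--Tsygan operator $B$. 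The convergence of the geometric series, guaranteed by $\lambda'>0$, is precisely what legitimizes the correction over the Novikov field.
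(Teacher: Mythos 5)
Your proof is correct, and it takes a genuinely different route from the paper's. Your key identity $\HH{d}\circ s+s\circ\HH{d}=\mathrm{id}+T$, with $T(w)=m_0(1)\otimes I\otimes w$, is exactly what the paper observes when it tests $s$ on length-one elements (it computes $(s\circ\HH{d}+\HH{d}\circ s)(x_1)=x_1+m_0\otimes I\otimes x_1$), and your sign analysis of why all interior terms cancel against the factor $(-1)^{|I|'}=-1$ while only the front $m_0$-insertion survives is the right one. But the paper never promotes this to a global operator identity to be inverted; instead it invokes the acyclicity of $(C_\bullet(A,A),\HH{d})$ from Lemma \ref{bartrivial2} together with the field property of $\NOVE$ to choose a splitting $C_\bullet(A,A)=\ker(\HH{d})\oplus V$, and then defines $\tilde{s}$ piecewise: $\tilde{s}=s$ on $V$, and $\tilde{s}(p)=I\otimes p-m_0\otimes I\otimes q$ for $p=\HH{d}(q)$ with $q\in V$, checking the homotopy identity by hand on each summand. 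Your geometric-series inversion $\tilde{s}=(\mathrm{id}+T)^{-1}\circ s$ buys three things: it needs neither the field property of the coefficients (so it works already over $\NOVO$, where the paper's splitting argument is unavailable) nor the prior acyclicity statement --- indeed it reproves that acyclicity --- and it produces a canonical, explicit formula for $\tilde{s}$ rather than one depending on the choice of complement $V$ and of preimages $q$. One small point you should make explicit: passing the commutation of $\HH{d}$ with $\mathrm{id}+T$ to the infinite series $(\mathrm{id}+T)^{-1}=\sum_{j\ge 0}(-1)^jT^j$ requires that $\HH{d}$ preserve the energy filtration (so that it commutes with filtration-convergent limits), which holds by (\ref{menergy}). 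Conversely, what the paper's construction buys is a leaner correction downstream: its $\tilde{s}$ differs from $s$ by the single term $-m_0\otimes I\otimes q$, so the modified Connes operator $B=(1-t)\tilde{s}N$ acquires extra terms only of the form $(1-t)(m_0\otimes I\otimes\alpha)$, whereas your $\tilde{s}$ contributes a whole series of corrections $(1-t)\bigl((m_0\otimes I)^{\otimes j}\otimes I\otimes\alpha\bigr)$; this difference is immaterial in the end, since all such terms contain the unit and therefore vanish in the normalized $(b,B)$-complex.
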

\begin{proof}
If $m_0=0$, we have a contracting homotopy as in the standard case defined as
$s: C_\bullet(A,A) \to C_\bullet(A,A)$ defined by
$$s(x_1\otimes \cdots \otimes x_n) = I \otimes x_1 \otimes \cdots \otimes x_n.$$
It is easy to check that we have
$$s \circ \HH{d} + \HH{d}\circ s = id - 0.$$

But in the filtered case with non-trivial  $m_0$, one can easily notice that it is no longer true:
for any $x_1$, we have 
\begin{eqnarray*}
(s \circ \HH{d} + \HH{d}\circ s)(x_1) &=& I \otimes \big( m_1(x_1) + m_0 \otimes x_1 + (-1)^{|x_1|'}x_1 \otimes m_0\big) \\
 && + m_2(I,x_1) + m_1(I)\otimes x_1 - I \otimes m_1(x_1) + m_0 \otimes I \otimes x_1  \\
 &&- I \otimes m_0 \otimes x_1 -(-1)^{|x_1|'} I \otimes x_1 \otimes m_0 \\
 &=& x_1 + m_0 \otimes I \otimes x_1 \neq x_1.
\end{eqnarray*}
Hence, we will modify our contracting homotopy $s$ to $\tilde{s}$ in the following way.
First, recall from the lemma \ref{bartrivial2} that the homology of the bar complex $(\HH{B}C, \HH{d})$ is isomorphic to $\NOVO$.
It implies that the homology of $(C_\bullet(A,A),\HH{d})$ vanishes (with $\NOVE$ or $\NOVO$ coefficient) as we do not
consider the part $B_0C$.
\begin{lemma}
There exist a $\NOVE$ module $V$ such that 
$$C_\bullet(A,A) = Ker(\HH{d}) \oplus V.$$
\end{lemma}
\begin{proof}
We first note that $ Ker(\HH{d})  = Im (\HH{d})$ is $\NOVE$-module as $\HH{d}$ is $\NOVE$-linear.
As $\NOVE$ is a field, it is then easy to find such subspace $V$.
\end{proof}
Now, we define $\tilde{s}$ using the decomposition in the lemma. 
Define for $p \in V$, $\tilde{s}(p) = s(p)$ as before.
Define for $p \in Ker (\HH{d}) = Im (\HH{d})$, consider
$q \in V$ such that $p = \HH{d}(q)$ and define
$$ \tilde{s} (p) = I \otimes p - m_0 \otimes I \otimes q.$$

Then, $\tilde{s}$ now satisfies
\begin{equation}\label{stilde}
\tilde{s} \circ \HH{d} + \HH{d}\circ \tilde{s} = id - 0 .
\end{equation}
To see this, 
for $p \in V$, 
$$\tilde{s} \circ \HH{d}(p) + \HH{d}\circ \tilde{s}(p)
= I \otimes \HH{d}(p) - m_0 \otimes I \otimes p + \HH{d} ( I \otimes p)$$
For convenience, write $\HH{d} = \HH{d}_0 + \HH{d}_+$ where $\HH{d}_0 = \HH{m}_0$.
From the unital property of $\HH{d}$, we have 
$$\HH{d}(I \otimes p) = \HH{d}_0 (I \otimes p) + p - I\otimes \HH{d}_+(p).$$
Here, the second term is result of $\HH{d}_+$-operation containing $I$ in its input and the third term
is that of $\HH{d}$ which does not contain the unit $I$.
Hence the (\ref{stilde}) follows by adding up and computing $\HH{d}_0$.

For $p \in Ker(\HH{d})$ and $q \in V$ such that $p = \HH{d}(q)$,
we have
$$\tilde{s} \circ \HH{d}(p) + \HH{d}\circ \tilde{s}(p)
= \tilde{s} \circ (\HH{d}(\HH{d}(q))) + \HH{d} \circ \tilde{s}(\HH{d}(q))$$
\begin{equation}\label{tilde3}
= 0 + \HH{d}\big( I \otimes \HH{d}(q) - m_0 \otimes I \otimes q \big)
\end{equation}
Note that $\HH{d}( I \otimes \HH{d}(q))$ equals
\begin{equation}\label{tilde1}
\HH{d}_0(I \otimes \HH{d}_0 q) + \HH{d}_+(I \otimes \HH{d}_0q) + \HH{d}_0 (I \otimes \HH{d}_+q) +  \HH{d}_+ (I \otimes \HH{d}_+q )
\end{equation}
\begin{eqnarray}\label{tilde2}
\,&=& \big(m_0 \otimes I \otimes \HH{d}_0 q + I \otimes \HH{d}_0 (\HH{d}_0(q))\big) + \big(\HH{d}_0(q)) - I \otimes \HH{d}_+(\HH{d}_0(q)) \big) \nonumber \\
&&+ \big( m_0 \otimes I \otimes \HH{d}_+q - I \otimes \HH{d}_0(\HH{d}_+(q)) \big) 
+ \big(\HH{d}_+q - I \otimes \HH{d}_+(\HH{d}_+(q)) \big) \nonumber \\
&=& m_0 \otimes I \otimes \HH{d}q + \HH{d}(q) - I \otimes (\HH{d} \circ \HH{d}(q)) \nonumber \\
&=& m_0 \otimes I \otimes \HH{d}q + \HH{d}(q)
\end{eqnarray}
Here, we used the fact that $\HH{d}_0 \circ \HH{d}_0=0$.

Now, the third term of (\ref{tilde3}) equals
\begin{eqnarray}\label{tilde4}
-\HH{d}(m_0 \otimes I \otimes q) &=& -\HH{d}_0(m_0\otimes I \otimes q) - \HH{d}_+(m_0\otimes I \otimes q) \nonumber \\
&=& - m_0 \otimes I \otimes \HH{d}_0(q) -  m_0 \otimes I \otimes \HH{d}_+(q) \nonumber \\
&=& -m_0 \otimes I \otimes \HH{d}q
\end{eqnarray}
The claim follows by adding (\ref{tilde2}) with (\ref{tilde4}).
\end{proof}

This lemma is now used to define a new Connes operator $B=(1-t)\tilde{s}N$. As we have used $\tilde{s}$ instead of $s$, the resulting
$B$ is slightly different from the standard $B$ which may contain additional terms of the form $(1-t)(m_0 \otimes I \otimes \alpha)$.

And we obtain the following bicomplex whose homology is isomorphic to cyclic homology.
\begin{equation}
\xymatrix{ \hspace{1cm} & \hspace{1cm} & \hspace{1cm} \\
C_{1}(A,A) \ar[u]_{b}  &  C_{2}(A,A) \ar[u]_{b} \ar[l]_{B} & C_{3}(A,A) \ar[u]_{b} \ar[l]_{B} & \;    \ar[l]_{B}  \\
C_{0}(A,A) \ar[u]_{b}  &  C_{1}(A,A) \ar[u]_{b} \ar[l]_{B} & C_{2}(A,A) \ar[u]_{b} \ar[l]_{B}  & \;  \ar[l]_{B} \\
C_{-1}(A,A) \ar[u]_{b}  &  C_{0}(A,A) \ar[u]_{b} \ar[l]_{B} & C_{1}(A,A) \ar[u]_{b} \ar[l]_{B}  & \; \ar[l]_{B} \\
\hspace{1cm} \ar[u]_{b}  &  \hspace{1cm} \ar[u]_{b} & \hspace{1cm} \ar[u]_{b}  }
\end{equation}

Now, as we have $(b,B)$-complex, we can also consider normalized $(b,B)$-complex by considering
$C_{i}^{red}(A,A)$ instead of $C_{i}(A,A)$ in the $(b,B)$-complex above. There is an obvious surjection
from $(b,B)$-complex to normalized $(b,B)$-complex which can be shown to be quasi-isomorphism.

We remark that even though we have normalized $(b,B)$ complex, we do not have normalized Tsygan's bicomplex.
Also, the additional terms of Connes operator $B$ in the filtered case will disappear in the normalized $(b,B)$-complex, hence
giving rise to the standard $B$-operator. One can also define variants of cyclic homologies as in the standard case.

\section{Lagrangian Floer theory}
We recall some of the main results  of \cite{FOOO}, and apply to them the homology theories discussed so far.
Their invariance properties can be proved as a corollary.

\begin{theorem}[FOOO, Theorem A]
To each relatively spin Lagrangian submanifold $L$, we can associate
a structure of gapped filtered $\AI$-algebra structure $\{m_k\}$ on
$H^*(L,\NOVO)$, which is well-defined up to isomorphism. If
$\psi:(M,L) \to (M',L')$ is a symplectic diffeomorphism, then we can
associate to it an isomorphism $\psi_*:=(\psi^{-1})^*: H^*(L,\NOVO)
\to H^*(L',\NOVO)$ of filtered $\AI$-algebras whose homotopy class
depends only of the isotopy class of symplectic diffeomorphism
$\psi$.

 The Poincare dual $PD[L] \in H^{0}(L,\NOVO)$ of the fundamental class $[L]$ is
the unit of our filtered $\AI$-algebra.
\end{theorem}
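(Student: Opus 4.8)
The plan is to build the operations $m_k$ by counting pseudo-holomorphic discs and to read off every asserted property from the geometry of the compactified moduli spaces. First I would fix a compatible almost complex structure $J$ and, for each class $\beta \in \pi_2(M,L)$ and each $k\geq 0$, consider the moduli space $\CM_{k+1}(\beta)$ of $J$-holomorphic maps from the disc with $k+1$ cyclically ordered boundary marked points in class $\beta$, together with the boundary evaluation maps $ev_0,\dots,ev_k : \CM_{k+1}(\beta)\to L$. Regarding $ev_1,\dots,ev_k$ as inputs and $ev_0$ as output, I would define $m_{k,\beta}$ as the operation sending chains $P_1,\dots,P_k$ on $L$ to the pushforward under $ev_0$ of the fiber product $\CM_{k+1}(\beta)\,{}_{(ev_1,\dots,ev_k)}\!\times (P_1\times\cdots\times P_k)$, and then set $m_k=\sum_\beta T^{\omega(\beta)}e^{\mu_L(\beta)} m_{k,\beta}$ (with $m_0$ the contribution of the one-output, no-input discs). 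Gromov compactness guarantees that for each energy level only finitely many $\beta$ contribute and that $\omega(\beta)\to\infty$, which is exactly the $G$-gapped condition of Section 2 and makes each $m_k$ a well-defined map into the completion over $\NOVO$.

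Second I would derive the $\AI$-relations from the boundary of the compactification. The codimension-one strata of $\overline{\CM}_{k+1}(\beta)$ are the configurations where a boundary node forms and the disc breaks into two discs in classes $\beta_1+\beta_2=\beta$ sharing that node; matching the output of one factor to an input of the other reproduces precisely the quadratic sum $\sum m_{k_1}(\cdots,m_{k_2}(\cdots),\cdots)$, and the vanishing of $\partial\partial$ on the virtual chain level yields (\ref{aiformula}) including the $m_0$ terms. To obtain a structure on $H^*(L,\NOVO)$ rather than on chains, I would first carry out the above on a chain-level model (de Rham forms, which forces the coefficient ring to contain $\RR$, or singular chains) and then transport it to cohomology by homological perturbation, i.e. by passing to the canonical model of the resulting filtered $\AI$-algebra used elsewhere in this paper; Proposition \ref{filteredhomo} then certifies that this transfer is a homotopy equivalence.

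For the remaining assertions I would use parametrized moduli spaces. Independence of the auxiliary data (the choice of $J$, of the Kuranishi perturbations, and of the chain model) is shown by considering moduli over a path of such data, which produces a filtered $\AI$-homomorphism between the two resulting algebras that is a homotopy equivalence, giving the ``well-defined up to isomorphism'' claim. Naturality under a symplectic diffeomorphism $\psi:(M,L)\to(M',L')$ is essentially tautological: $\psi$ carries $J$-holomorphic discs to $\psi_*J$-holomorphic discs, hence identifies moduli spaces and induces the $\AI$-isomorphism $\psi_*=(\psi^{-1})^*$, while an isotopy of $\psi$ gives a parametrized family and therefore a homotopy of the induced homomorphisms. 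Finally, to see that $PD[L]$ is the unit I would insert the fundamental cycle $[L]$ at a marked point and invoke the forgetful map $\CM_{k+1}(\beta)\to\CM_k(\beta)$: since inserting $[L]$ imposes no constraint at that point, the corresponding fiber product collapses along the forgetful fibers and its contribution vanishes for $(k,\beta)\neq(2,0)$, whereas the constant disc with $\beta=0$ produces exactly $m_2(PD[L],x)=\pm x$, so $PD[L]$ is a unit (in general a homotopy unit, made strict on the canonical model as in (\ref{unit})).

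The hard part will be transversality. The disc moduli spaces are essentially never cut out transversally --- constant and multiply-covered discs, and the failure of the boundary evaluation maps to be submersions, obstruct any naive count --- so the genuine content of the theorem is the construction of compatible Kuranishi structures and virtual fundamental chains (via multisections or continuous families of perturbations) on the entire system $\{\overline{\CM}_{k+1}(\beta)\}$ at once, chosen so that the perturbation on each boundary stratum is induced from the perturbations on its factors. Only such coherence makes the boundary identification of the second paragraph hold on the nose and hence makes the $\AI$-relations exact; getting the signs right requires the relative spin structure to orient all moduli spaces compatibly. This coherent perturbation theory, together with the homotopy-unit subtlety, is precisely where the bulk of the work in \cite{FOOO} lies, and I would import it wholesale rather than reprove it.
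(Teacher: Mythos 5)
This theorem is not proved in the paper at all --- it is imported from \cite{FOOO} (Theorem A), and the paper only remarks on its structure: one first builds a homotopy-unital filtered $\AI$-algebra on chains and then passes to the canonical model (FOOO, Theorem 23.2) to get a unital structure on $H^*(L,\NOVO)$. Your outline is a faithful sketch of exactly that construction (disc moduli, gappedness from Gromov compactness, $\AI$-relations from boundary strata, coherent Kuranishi perturbations, chain level followed by canonical model, forgetful maps for the unit), so it takes essentially the same route as the cited proof; the only quibbles are cosmetic --- the exponent of $e$ should be $\mu(\beta)/2$ in the FOOO grading convention, and the homotopy equivalence to the canonical model is certified by FOOO's Theorem 23.2 itself rather than by Proposition \ref{filteredhomo}, which only concerns induced isomorphisms on bar cohomology.
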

They first construct a filtered $\AI$-algebra
$(C(L,\NOVO),m)$ which is homotopy unital, and use the following theorem of the canonical model construction,
to obtain a filtered $\AI$-algebra structure on homology $H^*(L,\NOVO)$, which is unital.

\begin{theorem}[FOOO, Theorem 23.2]
Any gapped filtered $\AI$-algebra $(C,m)$ is homotopy equivalent to a gapped
filtered $\AI$-algebra $(C',m')$ with $\OL{m}_1 =0$. The homotopy equivalence
can be taken as a gapped filtered $\AI$-homomorphism. If $(C,m)$ is homotopy unital, then
its canonical model is unital.
\end{theorem}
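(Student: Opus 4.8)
The plan is to run the homological perturbation lemma in the energy-filtered setting, producing the canonical model $C'$ as a completion of the $\OL{m}_1$-cohomology of $C$. The first step is to reduce modulo $\NOVO^+$. Since $m_0(1) \in F^{\lambda'}C$ with $\lambda' > 0$ by (\ref{menergy}), the reduced operation $\OL{m}_0$ vanishes, so the $\AI$-relation (\ref{aiformula}) for $k=1$ becomes $\OL{m}_1 \circ \OL{m}_1 = 0$ modulo $\NOVO^+$; thus $(\OL{C}, \OL{m}_1)$ is a genuine cochain complex over $R$. Working over the field $R$, I would choose a Hodge-type splitting of this complex: a subspace realizing the cohomology $H := H(\OL{C}, \OL{m}_1)$, together with a projection $\pi : \OL{C} \to H$, an inclusion $\iota : H \to \OL{C}$, and a degree $-1$ homotopy $G : \OL{C} \to \OL{C}$ with $\mathrm{id} - \iota\pi = \OL{m}_1 G + G \OL{m}_1$. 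Extending $\NOVO$-linearly and completing, set $C' = H \HH{\otimes}_R \NOVO$, so that $\OL{m}'_1 = 0$ holds by construction.

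Next I would transport the $\AI$-structure along this retraction by the standard sum-over-trees formula. For each $k$, the operation $m'_k : B_k(C') \to C'[1]$ is defined as a sum over rooted planar trees with $k$ input leaves, where internal vertices are decorated by the operations $\{m_j\}_{j \geq 0}$ of $C$, internal edges by the homotopy $G$, leaves by $\iota$, and the root by $\pi$; the accompanying map $f : C' \to C$ is given by the analogous tree sum with the root decorated by $G$ (or simply by $\iota$ on the trivial tree) rather than $\pi$. Because $C$ is $G$-gapped and every nontrivial tree involves either some $m_j$ with $j \neq 1$ or the positive-energy part of $m_1$ (equivalently an occurrence of $m_0$), each such tree carries strictly positive energy, so the infinite sums converge in the energy-adic topology and the resulting operations are again gapped. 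This is precisely the point at which the Novikov completion is indispensable.

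With the operations in hand, the verification that $(C',m')$ is a filtered $\AI$-algebra and that $f$ is a filtered $\AI$-homomorphism is the usual homological perturbation computation: the $\AI$-relations for $m'$ and the homomorphism relations for $f$ follow by collecting, for each tree, the contributions of its distinguished internal edge and substituting $\OL{m}_1 G + G \OL{m}_1 = \mathrm{id} - \iota\pi$ together with $\OL{m}_1 \circ \OL{m}_1 = 0$. By construction the reduced linear part $\OL{f}_1$ is the quasi-isomorphism $\iota$, hence induces an isomorphism on $\OL{m}_1$-cohomology; by the homotopy-invariance machinery of the previous sections (Proposition \ref{filteredhomo} and the associated Whitehead-type statement for filtered $\AI$-homomorphisms in \cite{FOOO}) the map $f$ is then a filtered $\AI$-homotopy equivalence.

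The genuinely delicate point, and the one I expect to be the main obstacle, is the final unitality clause. When $(C,m)$ is only homotopy unital one cannot transport an arbitrary retraction and expect $m'$ to be \emph{strictly} unital. Instead I would choose the splitting data $(\pi,\iota,G)$ compatibly with the homotopy unit: arranging that $\iota$ carries the cohomology class of the unit to the distinguished unit, that $G$ annihilates the unit and its image, and that the auxiliary homotopies witnessing the homotopy-unit relations are respected under the tree transport. One then checks that the higher products $m'_k$ with $k \neq 2$ vanish whenever the strict unit is inserted and that $m'_2$ has the unit as a two-sided identity, as required by (\ref{unit}). Establishing the existence of such compatible data and tracing it through the tree sums is the technical heart of the homotopy-unital canonical model construction, and here I would follow the argument of \cite{FOOO}.
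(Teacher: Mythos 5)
This theorem is not proved in the paper at all: it is quoted verbatim from \cite{FOOO} (their Theorem 23.2, the canonical model theorem), so there is no internal argument to compare against. Your sketch is the standard construction and, up to presentation, the same as the one used in \cite{FOOO} (and recalled in \cite{FOOO3}): reduce modulo $\NOVO^+$ to get the honest cochain complex $(\OL{C},\OL{m}_1)$, choose a contraction onto its cohomology over the field $R$, transfer the structure by a sum over decorated trees, and invoke gappedness for convergence, with the Whitehead-type theorem of \cite{FOOO} upgrading the resulting quasi-isomorphism to a filtered $\AI$-homotopy equivalence. \cite{FOOO} organizes the transfer as an induction over the monoid $G$ rather than as an explicit tree sum, but these are the same argument.

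Two points in your write-up need tightening. First, the convergence claim that \emph{every} nontrivial tree carries strictly positive energy is false: a tree all of whose internal vertices are decorated by the energy-zero operations $m_{j,\beta=0}$ with $j\geq 2$ has energy zero. Convergence holds for a slightly different reason: for fixed arity $k$ there are only finitely many trees whose internal vertices all have valence at least three, so any infinite family of contributing trees must contain insertions of $m_0$ or of $m_{1,\beta}$ with $\beta\neq 0$, and by the gapped condition each such insertion costs energy at least $\lambda_0>0$; hence modulo each $F^{\lambda}$ only finitely many trees contribute and the sums converge in the completion. Second, the unitality clause is asserted rather than proved: you correctly isolate it as the delicate point (a strict unit is not preserved by an arbitrary contraction, and the splitting data must be chosen compatibly with the homotopy unit), but you then defer entirely to \cite{FOOO}. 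Since the paper itself cites \cite{FOOO} for the whole statement this is consistent with its treatment, but it is the one part of your proposal that is not self-contained.
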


Recall that a filtered $\AI$-algebra is called canonical if $\,
\OL{m}_1 =0$. Note that the canonical model still may have 
non-trivial $m_0 \in \NOVO^+$ and $m_{1,\beta}$ for $\beta \neq 0$.

Now, we apply the homology theories discussed so far.
\begin{definition}
Let $(C(L),m)$ be a gapped filtered $\AI$-algebra of a Lagrangian submanifold $L$.
The Hochschild (resp. cyclic) homology of $(C(L),m)$ is called
{\bf Hochschild (resp. cyclic) Floer homology } of $L$ and denoted as
$$HH_\bullet(C(L),m) =:HH_\bullet(L)\;\; \big(resp. \;\; HC_\bullet(C(L),m) =: HC_\bullet(L) \big). $$
Let $(C(L),l)$ be the induced $\LI$-algebra from $(C(L),m)$.
The Chevalley-Eilenberg homology of $(C(L),l)$ with coefficient in $(C(L),l)$ (resp. $\NOV$) 
is called {\bf Chevalley-Eilenberg (resp. cyclic Chevalley-Eilenberg) Floer homology }of $L$ and denoted as
$$H^{CE}_\bullet((C(L),l),(C(L),l)) =:H^{CE}_\bullet(L,L)\;\; \big(resp. \;\; H_\bullet^{CE}((C(L),l),\NOV) =: HC_\bullet^{CE}(L) \big). $$
\end{definition}

As mentioned in the introduction, the main motivation to study these homology theories is that
they provide well-defined homology theories even when the original $\AI$-structure is obstructed, and
they are invariant under various choices involved.

\begin{corollary}
Hochschild, cyclic and (cyclic) Chevalley-Eilenberg Floer homologies are well-defined up to isomorphism depending only
on the homotopy class of the $\AI$-algebra of Lagrangian submanifold.
\end{corollary}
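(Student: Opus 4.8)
The plan is to reduce the statement to the invariance results already established. By FOOO Theorem A together with the canonical model of Theorem 23.2, the gapped filtered $\AI$-algebra $A=(C(L),m)$ on $H^*(L,\NOVO)$ is determined up to gapped filtered $\AI$-homotopy equivalence by the isotopy class of $L$; changing the almost complex structure, the perturbation data, or the chosen canonical model only replaces $A$ by a homotopy equivalent filtered $\AI$-algebra. It therefore suffices to prove that each of the four homology theories sends a gapped filtered $\AI$-homotopy equivalence $f:A \to A'$ to an isomorphism. Fixing a homotopy inverse $g$ of $f$, the relations $f\circ g \simeq \mathrm{id}$ and $g\circ f \simeq \mathrm{id}$ show that $\OL{f}_1$ is a quasi-isomorphism of the $\OL{m}_1$-complexes, which is precisely the hypothesis of the comparison propositions below.

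For the cyclic and cyclic Chevalley-Eilenberg Floer homologies, recall from Definition \ref{def:cycho} that $HC_\bullet(L)$ and $HC^{CE}_\bullet(L)$ are the homologies of the subcomplexes $(\HH{B}^{cyc}_{\geq 1}C,\HH{d})$ and $(\HH{E}_{\geq 1}C,\HH{d})$. By Lemma \ref{fe} the coalgebra map $\HH{f}$ descends to chain maps on both, and Proposition \ref{filteredhomo}, whose spectral-sequence argument applies verbatim to the cyclic and symmetric subcomplexes, shows these are isomorphisms on cohomology. Since $HC^{CE}_\bullet(L)$ uses $\NOV$ rather than $\NOVO$ coefficients, the Remark following Proposition \ref{filteredhomo} supplies the isomorphism after tensoring with $\NOV$. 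This settles the two cyclic theories.

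For the Hochschild Floer homology $HH_\bullet(L)=H_\bullet(A,A)$ and the Chevalley-Eilenberg Floer homology $H^{CE}_\bullet(L,L)$, we view $A$ as a bimodule over itself through $n_{i,j}=m_{i+j+1}$, and similarly $A'$. Reading $f$ as an $\AI$-bimodule homomorphism over $(f,f)$ yields the chain map $f_*:C_\bullet(A,A) \to C_\bullet(A',A')$ of the algebra-varying form constructed just before Proposition \ref{homotopyequiv}, and the goal is to see that it is a quasi-isomorphism. The main obstacle is that Proposition \ref{homotopyequiv} is stated for coefficient bimodules over a fixed algebra, whereas here the algebra and its coefficient bimodule vary at once, so it cannot be quoted directly. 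Instead I would rerun the proof of Proposition \ref{filteredhomo} on the Hochschild complex: using the gapped condition choose a common $\lambda_0>0$, equip $C_\bullet(A,A)$ and $C_\bullet(A',A')$ with the energy filtration $\CF^n=F^{n\lambda_0}$ (complete and exhaustive, and preserved by $f_*$). On the $\CE_1$-page the differential is the energy-zero part of $d^{Hoch}$, i.e.\ the unfiltered Hochschild differential built from $\OL{m}_1$, since the $m_0$ terms raise energy; the induced map is the unfiltered $\OL{f}_*$. A secondary number filtration on $\CE_1$, exactly as in Proposition \ref{filteredhomo}, reduces the comparison on the $\CE_2$-page to the fact that $\OL{f}_1$ is an isomorphism on $\OL{m}_1$-cohomology. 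The Eilenberg-Moore comparison theorem then gives that $f_*$ is an isomorphism on homology without requiring convergence of either spectral sequence. The Chevalley-Eilenberg case is identical after replacing $C_\bullet(A,A)$ and $d^{Hoch}$ by the symmetric complex and $d^{CE}$.

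It remains only to note that these isomorphisms are functorial at the chain level: $\HH{f}$ and $f_*$ respect composition, so a homotopy inverse induces the inverse isomorphism and the homologies depend only on the homotopy class of $A$, hence only on $L$; compatibility with homotopies follows as in Proposition \ref{homotopyequiv}. This yields the asserted well-definedness up to isomorphism.
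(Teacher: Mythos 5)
Your proposal is correct, and for the cyclic and cyclic Chevalley-Eilenberg theories it coincides with the paper's (very terse) proof, which simply cites Theorem A of \cite{FOOO} together with Proposition \ref{filteredhomo} and Proposition \ref{homotopyequiv}. Where you diverge is in the Hochschild and Chevalley-Eilenberg cases: the paper disposes of these by citing Proposition \ref{homotopyequiv}, but as you rightly observe, that proposition compares two bimodules over a \emph{fixed} $\AI$-algebra $A$, whereas the corollary requires comparing $H_\bullet(A,A)$ with $H_\bullet(A',A')$ when the algebra itself is replaced by a homotopy equivalent one. The paper's intended completion of this step is presumably via the pull-back construction of section 2.4 (exactly as it later does in Corollary \ref{isoiso}, where a bimodule map over $(f,id)$ is converted into one over $(id,id)$ before Proposition \ref{homotopyequiv} is invoked), which reduces the algebra-varying comparison to the fixed-algebra statement; your route instead reruns the proof of Proposition \ref{filteredhomo} directly on $C_\bullet(A,A)$ with the primary energy filtration, the secondary number filtration, and the Eilenberg-Moore comparison theorem. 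Both fill the same gap; yours is self-contained, stays entirely within the spectral-sequence machinery the paper already sets up (and which the proof of Proposition \ref{homotopyequiv} itself delegates to the reader), and makes explicit the functoriality needed to conclude that homotopy inverses induce inverse isomorphisms. One small imprecision: on the $\CE_1$-page of the energy filtration the induced differential is the full unfiltered Hochschild differential built from \emph{all} the $\OL{m}_k$ (and $\OL{n}_{i,j}$) with $\beta=0$, not just $\OL{m}_1$; this is harmless, since your subsequent secondary number filtration is exactly what isolates the $\OL{m}_1$-part, and is indeed the reason that step is needed.
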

\begin{proof}
 Theorem A of \cite{FOOO}, with the
Proposition \ref{filteredhomo}, Proposition \ref{homotopyequiv} proves the corollary.
\end{proof}

To study the Lagrangian intersection theory, the case of a pair of Lagrangian submanifolds
$(L_1,L_0)$ is considered.
\begin{theorem}[FOOO, Theorem 12.72]
Let $L_1,L_0$ be a relatively spin pair of Lagrangian submanifolds, which
are of clean intersection. Then we have
$$\big(C(L_1,L_0),n \big)$$ which has the structure of  filtered $\AI$-bimodule over the pair
$$\big( \big(C(L_1,\NOVO),m_*\big), \big(C(L_0,\NOVO),m_*\big)).$$
\end{theorem}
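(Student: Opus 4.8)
The plan is to construct the bimodule operations geometrically by counting pseudo-holomorphic strips and then to extract the $\AI$-bimodule relation $\HH{d}\circ\HH{d}=0$ (equation (\ref{ddmod})) from the description of the codimension-one boundary of the relevant one-dimensional moduli spaces. First I would set up the underlying $\NOVO$-module: since $L_1$ and $L_0$ meet cleanly, the intersection $L_1\cap L_0$ is a (possibly disconnected) submanifold, and $M=C(L_1,L_0)$ is taken to be a chain model (a Morse complex of chosen Morse data on $L_1\cap L_0$) tensored with $\NOVO$. The generators are thus localized at the clean intersection, and the $\ZZ$- (or $\ZZ/2$-) grading comes from the Maslov-type index together with the degree of the chain.

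Next I would define the structure maps $n_{k_1,k_0}:B_{k_1}(C_1)\otimes M[1]\otimes B_{k_0}(C_0)\to M[1]$ by counting isolated elements of the moduli space of holomorphic maps from a strip $\RR\times[0,1]$ whose upper boundary lies on $L_1$ and lower boundary on $L_0$, carrying $k_1$ ordered boundary marked points on the $L_1$-edge and $k_0$ on the $L_0$-edge, with an input at $-\infty$ and an output at $+\infty$. The tensor inputs from $B_{k_1}(C_1)$ and $B_{k_0}(C_0)$ are inserted at the corresponding marked points, and the map records the signed count weighted by its Novikov energy $T^{\omega}e^{\mu}$. Gromov compactness guarantees that the energies accumulate only at infinity, which yields the gapped condition and ensures that the completed operator in (\ref{bimodulebar}) is well-defined and filtration-preserving.

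The $\AI$-bimodule relation is then forced by the structure of the compactified one-dimensional moduli spaces. Their codimension-one boundary strata are of exactly three types: (i) a holomorphic disc bubbles off along the $L_1$-edge, producing an $m^1_k$ operation on the upper inputs; (ii) a disc bubbles off along the $L_0$-edge, producing an $m^0_k$ operation on the lower inputs; and (iii) the strip breaks at an intermediate intersection point into two strips, producing a composite $n\circ n$. Summing over all boundary contributions and using that the signed count over the boundary of a compact one-manifold is zero yields precisely $\HH{d}\circ\HH{d}=0$, whose leading term is (\ref{bimoduleeq}); the $m_0$-contributions enter exactly as the disc-bubbling terms carrying no boundary insertions. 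Coherence of signs is arranged by the relative spin structures on $L_1$ and $L_0$, which trivialize the obstruction to orienting these moduli spaces compatibly with the gluing maps.

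The hard part will be the analytic foundations rather than the formal boundary bookkeeping: establishing transversality for all the moduli spaces simultaneously (which in general requires virtual perturbation / Kuranishi techniques, since one cannot make the almost complex structures generic while keeping the strip-like ends and the clean-intersection normal data fixed), and verifying that the boundary of the virtual fundamental chains decomposes exactly into the three strata above. The clean but non-transverse intersection additionally demands a gluing analysis matching Morse trajectories on $L_1\cap L_0$ to the strip ends. These are precisely the issues carried out in \cite{FOOO}, so I would invoke their construction for the analytic part and present the above only as the geometric mechanism underlying the algebraic statement.
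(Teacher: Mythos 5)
This statement is not proved in the paper at all: it is imported verbatim from \cite{FOOO} (their Theorem 12.72), and section 6 of the paper merely recalls it as background so that the Hochschild and Chevalley--Eilenberg theories can be applied to the resulting bimodule. So the only meaningful comparison is with the construction in \cite{FOOO}, and your sketch is essentially that construction in outline: strips with boundary marked points on the two edges, operations $n_{k_1,k_0}$ defined from (virtual) counts weighted by $T^{\omega}e^{\mu}$, the relation $\HH{d}\circ\HH{d}=0$ extracted from the three codimension-one boundary strata (disc bubbling on the $L_1$-edge, on the $L_0$-edge, and strip breaking, matching the three groups of terms in the paper's formula for the bimodule differential), orientations from the relative spin structures, and gappedness from Gromov compactness. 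The one substantive deviation is your chain model: you put a Morse complex on $L_1\cap L_0$ and glue Morse trajectories to strip ends in cascade fashion, whereas \cite{FOOO} Theorem 12.72 takes singular chains on the components of the clean intersection and defines the operations by fiber products of the evaluation maps at the strip ends with those chains (a Morse-complex model appears only later, in \cite{FOOO3}, and only for the algebra of a single Lagrangian). Because of this, your phrase ``counting isolated elements with an input at $-\infty$ and an output at $+\infty$'' is literally correct only in the transverse case; in the clean case the ends evaluate into a positive-dimensional manifold and the operations are intrinsically chain-level fiber products (or, in your model, cascade moduli), which is precisely where the extra gluing analysis you flag resides. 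Since both your proposal and the paper ultimately defer the Kuranishi/virtual-chain foundations to \cite{FOOO}, your proposal has the same logical status as the paper's citation: a correct geometric outline, not an independent proof.
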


Now, we restrict to the case when the Lagrangian submanifold $L_1$ is obtained as a Hamiltonian isotopy of $L_0 =L$
(namely, $L_1 = \phi_1(L)$ where $\phi_s, ( s\in [0,1])$ is a Hamiltonian isotopy with $\phi_0
=id$). By \cite{FOOO} Theorem 19.1, we have a homotopy equivalence
$$f:(C(L,\NOVO),m) \to (C(\phi_1(L),\NOVO),m).$$

By using $f$, we can pull-back the $\AI$-bimodule $\big(
C(\phi_1(L),L), n\big)$ to be an $\AI$-bimodule  $\big(
C(\phi_1(L),L), (f,id)^*n\big)$ over a pair $\big(
(C(L,\NOVO),m),(C(L,\NOVO),m)\big)$ as explained in section 2.4.

\begin{theorem}[FOOO, Theorem 12.75]
Let us assume $L=L_1=L_0$. We also assume $J_t$ is independent of $t$.
Then the $\AI$-bimodule structure on $C(L_1,L_0)$ can be taken as
the same as the $\AI$-algebra structure on $C(L,\NOVO)$.
\end{theorem}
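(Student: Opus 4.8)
The plan is to trace both structures back to their geometric definitions and observe that, under the stated hypotheses, the moduli spaces of pseudo-holomorphic curves defining them literally coincide. Recall from Theorem A (FOOO) that $m_k$ is defined by counting $J$-holomorphic discs with boundary on $L$ carrying $k$ cyclically ordered input marked points and one output marked point on $\partial D^2$, weighted by $T^{\omega(\beta)}e^{\mu_L(\beta)}$ over the disc classes $\beta$ with $\CM(L,\beta,J)\neq 0$. On the other hand, the bimodule maps $n_{k_1,k_0}$ of Theorem 12.72 count $J$-holomorphic strips $\RR\times[0,1]$ with upper edge on $L_1$ and lower edge on $L_0$, carrying $k_1$ marked points on the upper edge and $k_0$ on the lower edge, with asymptotics at the two ends $\pm\infty$ prescribing the incoming and outgoing module elements.

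First I would reconcile the two domains. The strip $\RR\times[0,1]$ is biholomorphic to the closed disc with two distinguished boundary points removed, so a bimodule configuration with $k_1+k_0$ edge marked points together with the two strip ends is precisely a disc with $k_1+k_0+2$ cyclically ordered boundary marked points. Under $L_1=L_0=L$ the boundary condition is uniformly $L$ on the whole of $\partial D^2$, so such a disc is exactly a domain entering the definition of $m_{k_1+k_0+1}$, with the two strip ends playing the role of the distinguished module input and the output. Because $J_t\equiv J$ is time-independent and $L_0=L_1$ forces the Hamiltonian perturbation term to vanish, the Floer equation on the strip reduces to the unperturbed $J$-holomorphic curve equation, which is exactly the equation solved by the disc moduli of Theorem A. Hence the underlying spaces of maps agree.

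Next I would match the decorations and the evaluation maps. The two asymptotic ends of the strip correspond to two adjacent boundary marked points of the disc carrying the module element $\UL{v}$ (input) and its image (output); the remaining marked points, split as $k_1$ on one arc and $k_0$ on the other, carry the algebra elements. Since $L\cap L=L$ is a clean (indeed total) self-intersection, the generators are taken from the de Rham/Morse--Bott model on $H^*(L,\NOVO)$, which is the same underlying space as the $\AI$-algebra. Evaluating at the marked points and comparing the two integrals over the identified moduli space, the operation $n_{k_1,k_0}(x_1,\cdots,x_{k_1},\UL{v},z_1,\cdots,z_{k_0})$ is computed by the same count as $m_{k_1+k_0+1}(x_1,\cdots,x_{k_1},v,z_1,\cdots,z_{k_0})$. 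In particular $n_{0,0}=m_1$, so the module differential is the algebra differential, and one recovers precisely the diagonal-bimodule assignment $n_{i,j}=m_{i+j+1}$ used in Section 4.1; thus $C(L_1,L_0)$ is identified with $A=(C(L,\NOVO),m)$ regarded as a bimodule over itself.

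The main obstacle is analytic rather than combinatorial: one must arrange the Kuranishi structures, the perturbing multisections, and the orientations on the bimodule moduli spaces to coincide with the data already chosen for the $\AI$-algebra, so that the virtual counts, and hence the structure constants, agree on the nose and not merely up to homotopy. Since both families carry the same Gromov compactification and the same gapped energy filtration (the symplectic areas $\omega(\beta)$ and Maslov indices $\mu_L(\beta)$ are identical), such a compatible choice is possible, and the matching of orientation signs follows from the relative spin structure on $L$ exactly as in FOOO. With these choices the equality $n_{k_1,k_0}=m_{k_1+k_0+1}$ holds, which is the assertion of the theorem.
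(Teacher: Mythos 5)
The paper itself contains no proof of this statement: Theorem 12.75 is one of several results recalled verbatim from \cite{FOOO} in Section 6, all of which are quoted as black boxes to feed into the homology theories constructed earlier. So there is nothing in the paper to compare your argument against line by line; what can be judged is whether your sketch captures the mechanism that the citation points to, and it does. The identification of the strip $\RR\times[0,1]$ with a twice-punctured disc, the observation that $L_1=L_0=L$ together with a time-independent $J$ turns the (unperturbed, Bott--Morse) Floer equation into the $J$-holomorphic disc equation underlying Theorem A, and the resulting identification $n_{k_1,k_0}=m_{k_1+k_0+1}$ — which is precisely the diagonal bimodule convention the paper itself uses in Section 4.1 — is the correct geometric content of the theorem.

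The one place your write-up is thinner than it should be is the final paragraph, and it is exactly where the theorem's content lies: the phrase ``can be taken as'' asserts the existence of a choice of perturbation data under which the structure constants agree on the nose, not merely up to homotopy. You justify this by appealing to agreement of Gromov compactifications and energy filtrations, but that phrasing makes it sound like a nontrivial compatibility must be verified. It does not: once your first two paragraphs have shown that the two families of moduli spaces are \emph{literally the same spaces}, one simply equips the bimodule moduli spaces with the identical Kuranishi structures, multisections and orientations already fixed for the $\AI$-algebra, and the bimodule equation $\HH{d}\circ\HH{d}=0$ is then the $\AI$-equation read in bimodule notation. The theorem is in this sense true by construction, and stating that directly — rather than asserting that ``such a compatible choice is possible'' — is what closes the argument.
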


The following theorem of \cite{FOOO}  proves the invariance of Floer cohomology.
\begin{theorem}[FOOO, Theorem 22.14]
There exists an $\epsilon$-weakly filtered $\AI$-bimodule homomorphism
$\Phi : (C(\phi_1(L),L;\NOV),n) \to (C(L,L;\NOV),n)$ over $(f,id)$, which
is a homotopy equivalence.
Here $\epsilon$ is any number greater than the Hofer length of the
Hamiltonian isotopy $\{\phi_s\}_s$.
\end{theorem}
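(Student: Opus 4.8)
The plan is to realize $\Phi$ geometrically as a continuation-type $\AI$-bimodule homomorphism, upgrading the usual invariance of Lagrangian Floer cohomology under Hamiltonian isotopy to the chain level. First I would fix the isotopy $\{\phi_s\}_{s\in[0,1]}$ with $\phi_0=\mathrm{id}$ and $\phi_1(L)=L_1$, and for each $(k_1,k_0)$ define $\Phi_{k_1,k_0}$ by counting, with virtual fundamental chains, the moduli spaces of pseudo-holomorphic discs that carry $k_1$ boundary inputs on the $\phi_1(L)$-side, the distinguished module marked point, and $k_0$ inputs on the $L$-side, and whose $\phi_1(L)$-boundary arc is governed by a moving boundary condition built from $\{\phi_s\}$. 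Because $C(\phi_1(L),L)$ is pulled back along $f$ to a bimodule over $(C(L),C(L))$ (as in the pullback construction recalled above), the algebra operations on the $\phi_1(L)$-side enter $\Phi$ precisely through the components $\HH{f}$, matching the prescribed index data $(f,\mathrm{id})$.

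The energy estimate is the conceptual heart of the weakly filtered conclusion. A disc whose boundary arc is steered by $\{\phi_s\}$ has its geometric (nonnegative) energy differing from its topological energy by the action swept out by the isotopy, and this discrepancy is bounded above by the Hofer length of $\{\phi_s\}$. Tracking this through the filtration on $\HH{B}(C(L))\HH{\otimes}\WT{M}\HH{\otimes}\HH{B}(C(L))$ yields $\Phi_{k_1,k_0}\big(F^{\lambda_1}B_{k_1}\HH{\otimes} F^{\lambda}\WT{M}\HH{\otimes} F^{\lambda_0}B_{k_0}\big)\subset F^{\lambda_1+\lambda+\lambda_0-c}\WT{M}'$ with a single constant $c$ independent of $k_1,k_0$, where $c$ may be taken to be any number exceeding the Hofer length. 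This is exactly the $\E$-weakly filtered condition of Subsection \ref{sec:bhomotopy}, and it is precisely why one must pass to $\NOV$ rather than $\NOVO$ coefficients: energy loss is permitted, but uniformly controlled.

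Next I would verify the bimodule-homomorphism equation $\HH{\Phi}\circ\HH{d}=\HH{d}'\circ\HH{\Phi}$ by analyzing the codimension-one boundary of the compactified one-dimensional moduli spaces. The boundary strata are of three geometric types: disc bubbling along the $\phi_1(L)$-arc, contributing $m$-operations composed through $\HH{f}$; disc bubbling along the $L$-arc, contributing the remaining $m$-operations; and splitting of the domain across the interpolating region, contributing either the source bimodule differential followed by $\HH{\Phi}$ or $\HH{\Phi}$ followed by the target differential. Summing these contributions with the orientations supplied by the relative spin structure gives the desired identity. This is where the bulk of the labor lies, and where one invokes the Kuranishi-structure and multisection technology of \cite{FOOO} to achieve transversality and mutually compatible virtual chains so that the algebraic relation holds on the nose.

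Finally, to promote $\Phi$ to a homotopy equivalence I would run the same construction for the reversed isotopy $\{\phi_{1-s}\circ\phi_1^{-1}\}$ to obtain an inverse $\Psi$ over $(g,\mathrm{id})$, with $g$ a homotopy inverse of $f$, and then show that $\Psi\circ\Phi$ and $\Phi\circ\Psi$ are homotopic to the identity. The homotopies arise from a further parametrized family of boundary data interpolating between the concatenated isotopy and the constant one, exactly as in the proof that continuation maps compose to the identity up to homotopy; the model of $[0,1]\times M$ recalled above furnishes the algebraic target for these homotopies, and the weakly filtered bound for each piece again follows from the Hofer-length energy estimate. The main obstacle throughout is analytic rather than algebraic: establishing compactness and transversality for the moduli spaces with moving boundary conditions and arranging the virtual fundamental chains on all of them compatibly, so that every boundary identity is exact. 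By contrast, the Hofer-length filtration estimate and the formal bimodule bookkeeping are comparatively routine once this analytic foundation is in place.
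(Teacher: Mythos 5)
You should first be aware that the paper contains no proof of this statement: it is quoted, label and all, as Theorem 22.14 of \cite{FOOO}, and its only role in the paper is to serve as input --- combined with the pull-back construction of section 2.4 and Proposition \ref{homotopyequiv} --- for Corollaries \ref{isoiso} and \ref{isoiso2}. So the only meaningful comparison is with the argument inside \cite{FOOO} itself, not with anything in this paper.

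Measured against that argument, your outline identifies the correct mechanism, and it is essentially the one FOOO uses: a continuation-type bimodule homomorphism defined by virtual counts of discs with a moving boundary condition along $\{\phi_s\}$; the discrepancy between geometric and topological energy bounded by the Hofer length, which is exactly what produces the uniform constant $c$ in the $\epsilon$-weakly filtered condition of section \ref{sec:bhomotopy} and forces the passage from $\NOVO$ to $\NOV$; the equation $\HH{\Phi}\circ\HH{d}=\HH{d}'\circ\HH{\Phi}$ read off from codimension-one boundary strata; and a homotopy inverse built from the reversed isotopy, with the homotopies landing in a model of $[0,1]\times M$. However, what you have written is a roadmap rather than a proof. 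The two sentences in which you invoke the Kuranishi-structure and multisection technology and appeal to a further parametrized family of boundary data are not routine finishing touches: constructing virtual fundamental chains on all the moving-boundary moduli spaces compatibly with those already fixed for the two $\AI$-algebras, for $f$, and for the two bimodule structures, and then verifying that the resulting geometric homotopies are homotopies in the algebraic sense recalled in section 2 (weakly filtered, with the same uniform Hofer-length energy-loss bound, over homotopies of the underlying $\AI$-homomorphisms) is the actual content of the cited theorem and occupies a substantial portion of \cite{FOOO}. So your proposal is a faithful reconstruction of the intended proof strategy, but it cannot substitute for the citation: everything that makes the theorem hard has been delegated to the same black box the paper cites.
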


Consider the $\AI$-algebra $A=(C(L;\NOV),m)$, which also can be regarded as $\AI$-bimodule $(C(L,\NOV),n)$ over $(A,A)$.
Denote by $M$  the $\AI$-bimodule 
$$\big( C(\phi_1(L),L;\NOV), (f,id)^*n \big)\;\; \textrm{over} \;(A,A).$$
Then, from the above theorem, together with the pull-back construction, we obtain the weakly filtered homotopy
equivalence $\Phi : M \to (C(L,\NOV),n)$ over $(id,id)$ between the two $\AI$-bimodules over $(A,A)$. 

Let us denote also by $\WT{A}$ the induced $\LI$-algebra from the $\AI$-algebra $A$.
and let $\WT{M}$ be the induced $\LI$-bimodule over $\WT{A}$ obtained from the $\AI$-bimodule $(M,(f,id)^*n)$ over $A$.

We emphasize that the following two theorems hold even for obstructed Lagrangian submanifolds.
\begin{corollary}\label{isoiso}
We have isomorphisms of Hochschild and Chevalley-Eilenberg homology:
\begin{eqnarray*}
H_{\bullet}(A,M) &\cong H_\bullet(A,A) &= HH_{\bullet}(L) \\
H_{\bullet}^{CE}(\WT{A},\WT{M}) &\cong H_{\bullet}^{CE}(\WT{A},\WT{A}) &= H^{CE}_{\bullet}(L,L) 
\end{eqnarray*}
\end{corollary}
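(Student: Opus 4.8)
The plan is to deduce both isomorphisms directly from Proposition \ref{homotopyequiv}, applied to the weakly filtered homotopy equivalence $\Phi$ constructed just above. Recall that combining FOOO Theorem 22.14 with the pull-back construction of section 2.4 yields a weakly filtered $\AI$-bimodule homomorphism $\Phi : M \to (C(L,\NOV),n)$ over $(id,id)$ which is a homotopy equivalence, where both $M$ and $(C(L,\NOV),n)$ are gapped filtered $\AI$-bimodules over the gapped filtered $\AI$-algebra $A=(C(L;\NOV),m)$. Thus the hypotheses of Proposition \ref{homotopyequiv} are already in place, and the work of the corollary is mostly one of assembling the right inputs.

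First I would treat the Hochschild case. Since $\Phi$ is a (weakly) filtered gapped $\AI$-bimodule homomorphism over $A$ (over the identity) which is a homotopy equivalence, Proposition \ref{homotopyequiv} gives
$$H_\bullet(A,M) \cong H_\bullet\big(A,(C(L,\NOV),n)\big).$$
It then remains only to identify the right-hand side with $H_\bullet(A,A)$. When $L=L_1=L_0$ and $J_t$ is taken $t$-independent, FOOO Theorem 12.75 allows us to take the bimodule structure $n$ on $C(L,\NOV)$ to coincide with the $\AI$-algebra structure on $A$, that is $n_{i,j}=m_{i+j+1}$. By the definition of Hochschild homology this is exactly the diagonal bimodule, so $H_\bullet(A,(C(L,\NOV),n)) = H_\bullet(A,A) = HH_\bullet(L)$, which is the first line.

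For the Chevalley-Eilenberg case I would symmetrize. As explained at the end of section 2, the process of symmetrization carries the $\AI$-bimodule homomorphism $\Phi$ (over the identity $\AI$-homomorphism) to an $\LI$-bimodule homomorphism $\WT{\Phi} : \WT{M} \to \WT{A}$ over $\WT{A}$, and it sends homotopies to homotopies, so $\WT{\Phi}$ remains a weakly filtered homotopy equivalence of $\LI$-bimodules. Applying the $\LI$ part of Proposition \ref{homotopyequiv} then yields
$$H^{CE}_\bullet(\WT{A},\WT{M}) \cong H^{CE}_\bullet(\WT{A},\WT{A}) = H^{CE}_\bullet(L,L),$$
where the final equality is the definition of Chevalley-Eilenberg Floer homology from section 6.

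The only genuine subtlety, and the step I would be most careful about, is that $\Phi$ is merely \emph{weakly} filtered: the Hamiltonian isotopy forces an energy loss bounded by the Hofer length $\epsilon$, so $\Phi$ shifts the filtration only up to a fixed constant $c$ rather than preserving it. This is precisely why one must work over the Novikov field $\NOV$ rather than $\NOVO$, where such weakly filtered maps still induce isomorphisms, and it is exactly the situation Proposition \ref{homotopyequiv} was stated to cover. Consequently no new argument is needed beyond verifying that the gapped and weakly filtered hypotheses of that proposition hold for $\Phi$ and for its symmetrization $\WT{\Phi}$, which they do by construction.
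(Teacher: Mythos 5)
Your proposal is correct and follows essentially the same route as the paper: the paper's proof is precisely to combine the weakly filtered homotopy equivalence $\Phi$ furnished by FOOO Theorem 22.14 (via the pull-back construction) with Proposition \ref{homotopyequiv}, with the identification of $(C(L,\NOV),n)$ as the diagonal bimodule via FOOO Theorem 12.75 already built into the setup preceding the corollary. Your additional remarks on symmetrization and on the weakly filtered (Hofer-length) subtlety are exactly the points the paper delegates to Proposition \ref{homotopyequiv} and its surrounding discussion, so no new argument is needed.
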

\begin{proof}
This follows from the theorem 22.14 of \cite{FOOO} and the Proposition \ref{homotopyequiv}.
\end{proof}
\begin{corollary}\label{isoiso2}
If a Lagrangian submanifold $L$ is displaceable via Hamiltonian isotopy $\phi^1$ (i.e. $L \cap \phi^1(L) = \emptyset$),
then, its Hochschild Floer homology and Chevalley-Eilenberg Floer homology of $L$ vanish.
\end{corollary}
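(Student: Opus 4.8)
The plan is to leverage the isomorphisms already established in Corollary \ref{isoiso}, which identify the invariantly-defined Hochschild and Chevalley-Eilenberg Floer homologies of $L$ with the corresponding homologies of the displaced bimodule $M = \big(C(\phi_1(L),L;\NOV),(f,id)^*n\big)$ over $(A,A)$ (and its induced $\LI$-bimodule $\WT{M}$ over $\WT{A}$). The whole point is that these identifications hold \emph{without} any unobstructedness assumption, so even though the ordinary Floer cohomology of $L$ is not available, I may legitimately compute $HH_\bullet(L)$ and $H^{CE}_\bullet(L,L)$ by analyzing $M$ and $\WT{M}$. Displaceability will enter precisely by forcing these bimodules to be trivial.

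First I would use the hypothesis that $L$ is displaceable to choose the Hamiltonian isotopy $\phi_s$ so that $\phi_1(L) \cap L = \emptyset$. The empty set is trivially a clean intersection, so Theorem 12.72 of \cite{FOOO} still produces the filtered $\AI$-bimodule $C(\phi_1(L),L)$, and the homotopy equivalence $f$ together with the pull-back construction of section 2.4 produces $M$ over $(A,A)$. Since this Floer-type complex is generated by the intersection locus $\phi_1(L) \cap L$, and that locus is empty, the underlying module is the zero module: $M = 0$, and consequently the induced $\LI$-bimodule $\WT{M} = 0$ as well.

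Next I would feed $M=0$ into the two chain complexes. The Hochschild complex $C_\bullet(A,M) = \HH{\oplus}_{k\geq 0} M[1] \otimes C[1]^{\otimes k}$ carries $M[1]$ as a tensor factor in every summand, so it vanishes identically; the same holds for the Chevalley-Eilenberg complex $CE_\bullet(\WT{A},\WT{M}) = \HH{\oplus}_{k\geq 0} \WT{M}[1] \otimes E_kC$. Hence $H_\bullet(A,M)=0$ and $H^{CE}_\bullet(\WT{A},\WT{M})=0$, and combining with Corollary \ref{isoiso},
$$HH_\bullet(L) = H_\bullet(A,A) \cong H_\bullet(A,M) = 0, \qquad H^{CE}_\bullet(L,L) = H^{CE}_\bullet(\WT{A},\WT{A}) \cong H^{CE}_\bullet(\WT{A},\WT{M}) = 0,$$
which is exactly the assertion.

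The hard part here is not any computation but a matter of applicability: I must check that the weakly filtered homotopy equivalence $\Phi : M \to (C(L,L;\NOV),n)$ of Theorem 22.14 of \cite{FOOO}, together with the invariance result Proposition \ref{homotopyequiv}, really applies when one of the bimodules is the zero module. Since Proposition \ref{homotopyequiv} grants an isomorphism on Hochschild and Chevalley-Eilenberg homology for \emph{any} pair of gapped filtered bimodules related by a (weakly filtered) homotopy equivalence, and the zero bimodule is trivially such a gapped bimodule, no genuine obstacle arises. In effect, the content of the corollary is that displaceability makes the invariantly-defined diagonal bimodule $C(L,L)$ homotopy equivalent to $0$, and the emphasis is that this conclusion is insensitive to whether $L$ is obstructed.
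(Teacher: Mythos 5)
Your proposal is correct and takes essentially the same route as the paper: the paper's entire proof is that the statement ``directly follows from the above corollary [Corollary \ref{isoiso}] as the module $M$ or $\WT{M}$ would be void in such a case.'' You have merely made explicit the details the paper leaves implicit (the empty intersection is vacuously clean, the bimodule generated by it is zero, and the zero module annihilates every summand of the Hochschild and Chevalley-Eilenberg complexes), so no further changes are needed.
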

\begin{proof}
This directly follows from the above corollary as the module $M$ or $\WT{M}$ would be void in such a case.
\end{proof}
This proves the Theorem 1.3 stated in the introduction.

\section{Unobstructedness and Hochschild homology}
In this section, we discuss a relation between Maurer-Cartan elements and Hochschild homology of an $\AI$-algebra.
Namely, we prove 
\begin{prop}
Let $b$ be a Maurer-Cartan element of an unital $\AI$-algebra $A$.
Then, the following element $\gamma_b$ gives a Hochschild homology cycle of an $\AI$-algebra.
$$\gamma_b= I \otimes  e^b.$$
\end{prop}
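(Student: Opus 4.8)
The plan is to verify $d^{Hoch}(\gamma_b)=0$ directly by splitting the Hochschild differential (\ref{eq2}) into the part that acts internally on the algebra word and the part that wraps around through the module slot, and then to feed in the two hypotheses separately: that $b$ is a Maurer–Cartan element, which is equivalent to $\HH{d}(e^b)=0$ in the (completed) bar complex, and that $I$ is a strict unit. As a preliminary simplification I would note that a bounding cochain $b$ lies in $F^+C^1$, so $|b|'=0$; consequently every Koszul sign $\E_1,\E_2,\E_3$ occurring on the summands of $\gamma_b=\UL{I}\otimes e^b=\sum_{k\ge 0}\UL{I}\otimes b^{\otimes k}$ collapses, which removes essentially all of the sign bookkeeping.

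For the internal contribution, the first two sums of (\ref{eq2}) (the internal $m_j$ with $j\ge 1$, together with the $m_0$-insertions at all gaps of the $b$-word) act only on the $b$'s and leave the distinguished slot $\UL{I}$ untouched. Summed over all $k$ they therefore assemble into $\pm\,\UL{I}\otimes\HH{d}(e^b)$, where $\HH{d}$ is the full bar codifferential including $\HH{m}_0$; the infinite sum converges in the completion because $b\in F^+C$ forces the energy of $b^{\otimes k}$ to tend to infinity. I would then invoke the standard signless factorization $\HH{d}(e^b)=e^b\otimes\big(\sum_{j\ge 0}m_j(b,\dots,b)\big)\otimes e^b$, valid in this degree convention, so that the Maurer–Cartan equation $\sum_j m_j(b,\dots,b)=0$ makes this whole contribution vanish.

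It remains to treat the third, wrap-around sum, where $n_{i,j}=m_{i+j+1}$ is applied to $b^{\otimes i}\otimes\UL{I}\otimes b^{\otimes j}$. By the unit axioms (\ref{unit}), $m_{i+j+1}$ with $I$ among its arguments vanishes whenever $i+j\ge 2$ or $i+j=0$ (the latter being $m_1(I)=0$), so the only survivors occur for $i+j=1$: the case $(i,j)=(0,1)$ gives $m_2(I,b)=b$, and the case $(i,j)=(1,0)$ gives $m_2(b,I)=(-1)^{|b|}b=-b$. For each fixed word length both terms carry the same residual tensor factor $b^{\otimes(k-1)}$, and the signs $\E_3$ attached to them are both trivial (since $i=0$ in the first and $|b|'|I|'=0$ in the second), so the $+b$ cancels the $-b$ in pairs. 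Combining this with the vanishing of the internal part yields $d^{Hoch}(\gamma_b)=0$.

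I expect the wrap-around cancellation to be the delicate step: one must confirm that no additional $n_{i,j}$ terms survive once $m_0$ is permitted to appear interspersed among the $b$'s, and that the residual signs $\E_3$ for $(0,1)$ and $(1,0)$ genuinely agree so the two unit terms annihilate each other. A secondary point needing care is the signless identity $\HH{d}(e^b)=e^b\otimes\big(\sum_j m_j(b,\dots,b)\big)\otimes e^b$ in the filtered, completed setting, which is precisely where convergence and the $m_0$-term must be handled cleanly; the hypothesis $|b|'=0$ is what keeps all these signs under control.
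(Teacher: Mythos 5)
Your proof is correct and takes essentially the same route as the paper's: both split $d^{Hoch}(\gamma_b)$ into the internal part, which assembles to $\pm\,\UL{I}\otimes\HH{d}(e^b)$ and vanishes by the Maurer--Cartan equation, and the wrap-around part, which unitality reduces to $m_2(I,b)+m_2(b,I)=b+(-1)^{|b|}b=0$. One cosmetic point: since $|I|'=-1$, the signs $\E_1=\E_2$ are not trivial but give a uniform minus sign, so the internal contribution is $-\UL{I}\otimes e^b\otimes m(e^b)\otimes e^b$ exactly as in the paper's displayed computation---but, as you note, this sign is immaterial since that term vanishes anyway.
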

We remark that the correspondence does not guarantee a non-vanishing Hochschild homology class. The reason is that
when the Lagrangian submanifold is unobstructed and displaceable, then its Hochschild homology  should vanish,
due to the corollary \ref{isoiso2}.

We briefly recall the definition of unobstructedness.
Consider a filtered $\AI$-algebra $A=(C,m)$ with  $\HH{d}(1) = m_0(1) \neq 0$. Then, we have $m_1^2 \neq 0$ in general.
Suppose there exist an element $b \in C^1$ which satisfies the following equation: 
$$\HH{d}{e^b} =\HH{d}(  1 + b + b \otimes b +  b \otimes b  \otimes b + \cdots) =0.$$
If such an element exist, the $\AI$-algebra $A$ is called {\em unobstructed}, and $b$ is called
a bounding cochain or Maurer-Cartan elements.

With any such $b$, and one can deform the $\AI$-algebra $(C,m)$ into another $\AI$-algebra $A^b=(C,m^b)$ by defining
the new $\AI$-structure as
$$m_k^b(x_1 \otimes x_2 \otimes \cdots \otimes x_k) := m(e^b \otimes x_1 \otimes e^b \otimes \cdots \otimes e^b  \otimes x_k \otimes e^b), $$
for $x_1 \otimes  \cdots \otimes x_k \in B_kC$.
Here $$m(e^b \otimes x_1 \otimes e^b \otimes x_k \otimes e^b) = \sum_* m_{k+*}(b,\cdots,b,x_1,b,\cdots,b,x_k,b,\cdots,b).$$

Note that if $b$ is a Maurer-Cartan element, we have
$$\HH{d}(e^b \otimes x_1  \otimes e^b) = \HH{d}(e^b) \otimes x_1 \otimes e^b + e^b \otimes m( e^b \otimes x_1 \otimes e^b) \otimes e^b
+ e^b \otimes x_1 \otimes \HH{d}(e^b) = e^b m_1^b(x_1) e^b.$$
This implies that
$$0 = \HH{d} \circ \HH{d} (e^b \otimes x_1  \otimes e^b) = \HH{d} (e^b \otimes m_1^b(x_1) \otimes  e^b) = 
e^b \otimes \big( (m_1^b)^2x_1 \big)
\otimes e^b.$$
Hence, $m_1^b$ defines a deformed chain complex whose homology in the Lagrangian case is called Lagrangian Floer homology. See \cite{FOOO} for more details.

Now, we begin the proof of the proposition.
\begin{proof}
As $b$ is a Maurer-Cartan element, we have $m(e^b)=0$.
Then, consider $\gamma_b$ defined as above, and it is easy to check that $d^{hoch}(\gamma_b)=0$.
We have 
\begin{eqnarray*}
d^{hoch}(\gamma_b) &=&m(I\otimes e^b) \otimes e^b - I \otimes e^b \otimes m(e^b) \otimes e^b + m(e^b \otimes I) \otimes e^b \\
&=& m_2(I,b) +m_2(b,I) = b + (-1)^{|b|}b = 0.
\end{eqnarray*}
\end{proof}

\begin{prop}
Let $A=(C,m)$ be unobstructed $\AI$-algebra. For any bounding cochain $b$ of $A$, consider a deformed $\AI$-algebra $A^b=(C,m^b)$.
Then, Hochschild homology of $A^b$ is independent of $b$ and for each $b$ we have
$$HH_\bullet(A) \cong HH_\bullet (A^b).$$
\end{prop}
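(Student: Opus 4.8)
The plan is to realize the passage from $A$ to $A^b$ as a filtered $\AI$-isomorphism and then quote the invariance of Hochschild homology under $\AI$-homotopy equivalences. Concretely, twisting by the Maurer-Cartan element $b$ is an isomorphism, so it suffices to prove the single statement $HH_\bullet(A^b)\cong HH_\bullet(A)$ for an arbitrary bounding cochain $b$; the asserted independence of $b$ is then immediate, since for two bounding cochains $b,b'$ one gets $HH_\bullet(A^b)\cong HH_\bullet(A)\cong HH_\bullet(A^{b'})$, the middle term not involving $b$.

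First I would define the coalgebra map $\HH{\Phi}:\HH{B}C\to \HH{B}C$ which inserts $e^b$ into every gap,
$$\HH{\Phi}(x_1\otimes\cdots\otimes x_k) = e^b\otimes x_1\otimes e^b\otimes\cdots\otimes x_k\otimes e^b,$$
the sum being well-defined because $b\in\NOVO^+$ forces the energy of the terms with many inserted $b$'s to tend to infinity; for the same reason $\HH{\Phi}$ preserves the energy filtration and is gapped. Its linear part is the identity and all higher components strictly raise energy, so $\HH{\Phi}$ is invertible, the inverse again being of twisting type, which yields a filtered $\AI$-homomorphism $\HH{\Psi}:A\to A^b$ inverse to $\HH{\Phi}$. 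The one computation to carry out is the intertwining relation $\HH{d}\circ\HH{\Phi}=\HH{\Phi}\circ\HH{d}^b$, i.e. that $\HH{\Phi}$ is an $\AI$-isomorphism from $A^b$ to $A$; this is exactly the identity $\HH{d}(e^b\otimes x_1\otimes e^b)=e^b\otimes m_1^b(x_1)\otimes e^b$ used in the deformation discussion above, applied now in every slot and for every tensor length, and it uses only $\HH{d}(e^b)=0$.

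Next I would transport this isomorphism to the Hochschild complexes. The diagonal bimodule ($n_{i,j}=m_{i+j+1}$) is functorial in the algebra, so $\HH{\Phi}$, regarded as a bimodule homomorphism over $(\HH{\Phi},\HH{\Phi})$, induces the chain map $\phi_*:C_\bullet(A^b,A^b)\to C_\bullet(A,A)$ of the form constructed earlier for an $\AI$-bimodule homomorphism over $(\alpha,\alpha)$, namely inserting $e^b$ into all the cyclic gaps of $\UL{v}\otimes x_1\otimes\cdots\otimes x_k$, including those adjacent to the marked module element. The analogous map $\psi_*$ built from $\HH{\Psi}$ is a chain map as well, and by functoriality of this construction (compatibility with composition and sending identities to identities) the composites $\psi_*\circ\phi_*$ and $\phi_*\circ\psi_*$ are the chain maps induced by $\HH{\Psi}\circ\HH{\Phi}=\mathrm{id}$ and $\HH{\Phi}\circ\HH{\Psi}=\mathrm{id}$. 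Hence $\phi_*$ is an isomorphism on homology; equivalently, one may simply invoke Proposition \ref{homotopyequiv}, whose proof via the homotopy models applies verbatim to the present varying-base situation once $\HH{\Phi}$ is known to be a homotopy equivalence of bimodules over $(\HH{\Phi},\HH{\Phi})$.

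I expect the main obstacle to be bookkeeping rather than ideas: verifying $\HH{d}\circ\HH{\Phi}=\HH{\Phi}\circ\HH{d}^b$ and that $\phi_*$ is a chain map requires tracking the Koszul signs and the placement of the wrapped-around $e^b$ at the marked point, and one must keep the energy filtration in play throughout to guarantee convergence of all the $e^b$-insertions and the existence of $\HH{\Psi}$. Once the twisting map is shown to be a gapped filtered $\AI$-isomorphism carrying $\HH{d}^b$ to $\HH{d}$, the invariance of Hochschild homology is purely formal from the functoriality established above together with Proposition \ref{homotopyequiv}.
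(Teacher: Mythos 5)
Your proposal is correct and takes essentially the same route as the paper: your twisting map $\HH{\Phi}$ is precisely the cohomomorphism of the filtered $\AI$-homomorphism $i^b:(C,m^b)\to(C,m)$ with components $i^b_0(1)=b$, $i^b_1=\mathrm{id}$, $i^b_{\geq 2}=0$ that the paper writes down, and both arguments then conclude by invariance of Hochschild homology under this equivalence. The only difference is one of packaging: the paper quotes the fact that $i^b$ is a homotopy equivalence from \cite{FOOO} Lemma 5.2.12, whereas you verify directly that the twisting map is a strict isomorphism (with inverse given by twisting by $-b$) and use functoriality of the induced Hochschild chain maps.
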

\begin{proof}
To prove this, we only need to show that $(A,m^b)$ and $(A,m)$ are homotopy equivalent as in \cite{FOOO} Lemma 5.2.12. Such homotopy
equivalence $i^b:(C,m^b) \to (C,m)$ can be given by defining 
$$i^b_0(1) = b, \;\; i_1^b = id,\;\; i_{\geq 2}^b =0.$$
\end{proof}

\begin{lemma}
If two bounding cochains are gauge equivalent, then the induced Hochschild homology cycles  are
homologous
\end{lemma}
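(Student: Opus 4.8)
The plan is to exhibit both Hochschild cycles $\gamma_{b_0}$ and $\gamma_{b_1}$ as images of a single cycle living over a model of $[0,1]\times C$, and then to invoke the homotopy invariance of the induced maps on Hochschild homology that underlies Proposition \ref{homotopyequiv}. Recall that $b_0$ and $b_1$ being gauge equivalent means, in the sense of \cite{FOOO}, that there is a model $\FC$ of $[0,1]\times C$ together with a bounding cochain $\tilde b$ of $\FC$ such that $Eval_{s=0}(\tilde b)=b_0$ and $Eval_{s=1}(\tilde b)=b_1$. First I would apply the previous proposition to the unital $\AI$-algebra $\FC$ to obtain the Hochschild cycle $\gamma_{\tilde b}=\UL{I}\otimes e^{\tilde b}\in C_\bullet(\FC,\FC)$, so that $d^{Hoch}(\gamma_{\tilde b})=0$.

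Next I would push $\gamma_{\tilde b}$ forward along the two evaluations. Since $Eval_s$ is a model evaluation its only nonzero component is $(Eval_s)_1$ (with $(Eval_s)_k=0$ for $k\neq 1$), so the induced chain map on the Hochschild chains of the diagonal bimodule acts factorwise: $(Eval_s)_*(\UL{v}\otimes w_1\otimes\cdots)=\UL{(Eval_s)_1(v)}\otimes (Eval_s)_1(w_1)\otimes\cdots$. As $(Eval_s)_1$ is unital and $\HH{Eval_s}(e^{\tilde b})=e^{b_s}$, this gives $(Eval_s)_*(\gamma_{\tilde b})=\UL{I}\otimes e^{b_s}=\gamma_{b_s}$ for $s=0,1$. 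Hence $[\gamma_{b_0}]=(Eval_0)_*[\gamma_{\tilde b}]$ and $[\gamma_{b_1}]=(Eval_1)_*[\gamma_{\tilde b}]$ in $HH_\bullet(A)$.

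Finally I would observe that the two evaluations are homotopic as filtered $\AI$-homomorphisms $\FC\to C$. Indeed $Eval_s\circ Incl=id_C$, and $Incl$ is a homotopy equivalence (it induces an isomorphism on $\OL{m}_1$-cohomology) whose one-sided inverse $Eval_s$ is therefore a homotopy inverse, so $Incl\circ Eval_s\simeq id_{\FC}$; this yields $Eval_0\simeq Eval_0\circ Incl\circ Eval_1=Eval_1$. By the argument already used in the proof of Proposition \ref{homotopyequiv}, homotopic (weakly) filtered $\AI$-homomorphisms induce the same map on Hochschild homology, so $(Eval_0)_*=(Eval_1)_*$. Combining the three steps gives $[\gamma_{b_0}]=[\gamma_{b_1}]$, which is the assertion.

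The hard part will be the step upgrading the homotopy invariance of Proposition \ref{homotopyequiv} — stated there for bimodule homomorphisms over a fixed algebra — to homomorphisms covering the two different algebra maps $Eval_0$ and $Eval_1$; this amounts to re-running the model-of-homotopy argument with the evaluation maps, and here their simple form is precisely what keeps the bookkeeping tractable. As an alternative to the model approach one can argue infinitesimally: choosing a gauge path $b(t)$ with $\dot b(t)=\sum_{i,j}m_{i+j+1}(b^{\otimes i}\otimes c\otimes b^{\otimes j})$ for some $c\in C^0$, one checks the identity $\frac{d}{dt}\gamma_{b(t)}=d^{Hoch}\big(\UL{I}\otimes e^{b(t)}\otimes c\otimes e^{b(t)}\big)$ and then integrates in $t$; the delicate point in that route is the manual cancellation, via the Maurer--Cartan equation $m(e^{b})=0$ and the unitality of $I$, of the $m_0$-insertion terms against the $m_{\geq 1}$-terms acting on the intervening $b$-blocks.
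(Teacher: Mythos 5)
Your proposal is correct and follows essentially the same route as the paper: both take the gauge-equivalence datum $\tilde b$ on a unital model $\FC$ of $[0,1]\times C$, form the Hochschild cycle $I\otimes e^{\tilde b}$ upstairs, push it down via $Eval_{s=0}$ and $Eval_{s=1}$ (using that these have only a length-one component), and then identify the two push-forwards on homology. Your last step—showing $(Eval_0)_*=(Eval_1)_*$ by exhibiting an $\AI$-homotopy $Eval_0\simeq Eval_1$—is just a repackaging of the paper's argument, which gets the same identity from $Eval_s\circ Incl=id$ together with the fact that $Incl$ and $Eval_s$ induce isomorphisms on Hochschild homology, "as in the proof of Proposition \ref{homotopyequiv}."
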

\begin{proof}
Let $\FA$ be the unital model of $[0,1] \times A$. By definition, two bounding cochains $b_0$ and $b_1$ are gauge equivalent,
if there exists a bounding cochain $\bf{b}$ of $\FA$ such that $Eval_{s}(\bf{b}) = b_s$ for $s=0$ and $1$.
As $\FA$ is unital $\AI$-algebra, let $\bf{I}$ be the unit of $\FA$. Then, $\bf{I}\otimes e^{\bf{b}}$ defines
a Hochschild cycle of $HH_\bullet(\FA)$. Also, note that $Eval_s$ induces a map between Hochschild cycles and
in fact as $(Eval_s)_k=0$ for $k \neq 0$, we have
$$ I \otimes e^{b_s} = Eval_s(\bf{I} \otimes e^{ \bf{b} })$$ 
But as $Eval_s$ and $Incl$ induces an isomorphism of Hochschild homology,
we may proceed as in the proof of Proposition 4.3 to prove that $I \otimes e^{b_s}$ has the same Hochschild homology 
for $s=0$ and $1$.
\end{proof}
We also remark that with a suitable Hochschild homology class(or in general a negative cyclic homology class), we can find explicit homotopy cyclic inner product structure on the $\AI$-algebra which will be explained in an upcoming joint work with Sangwook Lee
(see also \cite{C3}).

Now, we show that after dualization (see section \ref{sec:dual}), unobstructedness corresponds to the notion of an augmentation
(see for example \cite{Che}, \cite{EGH} for more details on augmentation) . Here an augmentation of a differential graded algebra $(B,d)$ is
an algebra homomorphism $\epsilon : B \to \kk$ to its coefficient ring $\kk$ such that $\epsilon \circ d =0$. The correspondence follows
easily from the formalism of \cite{FOOO}.
\begin{lemma}
Let $(A,m)$ be a filtered $\AI$-algebra over the Novikov field $\Lambda$. Suppose $(A,m)$ is unobstructed.
Then, the differential graded algebra $\big((\HH{B}A)^*,\HH{d}^* \big)$ has an augmentation.
\end{lemma}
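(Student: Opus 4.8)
The plan is to produce the augmentation by pairing against the group-like element $e^b$ determined by the bounding cochain. Since $(A,m)$ is unobstructed, by definition there is a Maurer-Cartan element $b \in C^1$, of positive valuation, satisfying $\HH{d}(e^b)=0$, where $e^b = \sum_{k\geq 0} b^{\otimes k} \in \HH{B}A$. Because $b$ has positive energy this sum converges in the energy filtration, so $e^b$ is a genuine element of the completed bar coalgebra. I would first record the two structural facts about $e^b$ that drive the whole argument: the Maurer-Cartan equation $\HH{d}(e^b)=0$, and the \emph{group-like} identity $\Delta(e^b) = e^b \otimes e^b$. The latter is the summation over $n$ of $\Delta(b^{\otimes n}) = \sum_{i=0}^n b^{\otimes i} \otimes b^{\otimes(n-i)}$, together with the observation that the $B_0$-component of $e^b$ equals $1 \in \Lambda$. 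Note also that $b \in C^1$ has shifted degree zero, so $e^b$ is an even element and no Koszul signs intervene below.

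Next I would define the candidate augmentation. Recall that the DGA structure on $(\HH{B}A)^*$ has product dual to the comultiplication $\Delta$ and differential $\HH{d}^*$ dual to $\HH{d}$. Define
$$\epsilon : (\HH{B}A)^* \to \Lambda, \qquad \epsilon(\phi) = \langle \phi, e^b\rangle.$$
That $\epsilon$ is an algebra homomorphism is precisely the group-like property: for $\phi,\psi \in (\HH{B}A)^*$ one computes $\epsilon(\phi \cdot \psi) = \langle \phi \otimes \psi, \Delta(e^b)\rangle = \langle \phi \otimes \psi, e^b \otimes e^b \rangle = \langle\phi,e^b\rangle\,\langle\psi,e^b\rangle = \epsilon(\phi)\epsilon(\psi)$. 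Moreover $\epsilon$ is unital: the unit of $(\HH{B}A)^*$ is the functional extracting the $B_0$-component, and $\epsilon$ sends it to the $B_0$-component of $e^b$, which is $1$. Finally, the augmentation condition $\epsilon \circ \HH{d}^* = 0$ is exactly the Maurer-Cartan equation, since $\epsilon(\HH{d}^* \phi) = \langle \HH{d}^*\phi, e^b\rangle = \pm\langle\phi, \HH{d}(e^b)\rangle = 0$, where the sign is irrelevant because $\HH{d}(e^b)=0$.

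The algebra is genuinely clean, so I expect the only real work to be topological bookkeeping in passing to the dual. The main obstacle is confirming that $\epsilon$ is well-defined and continuous on the topological dual of section \ref{sec:dual} rather than on a naive algebraic dual: one must check that pairing a continuous functional against the infinite sum $e^b$ converges, which is exactly where positivity of the valuation of $b$ is used, and that the dual product and $\HH{d}^*$ are the honest duals of $\Delta$ and $\HH{d}$ under this pairing (including sign conventions and completions). Once these compatibilities are verified, the two displayed computations above establish that $\epsilon$ is a unital algebra homomorphism annihilating the image of $\HH{d}^*$, which is precisely an augmentation.
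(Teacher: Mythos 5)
Your proof is correct and is essentially the paper's argument: the paper defines the very same functional, $\epsilon(f) = f(\HH{i}^b(1)) = f(e^b)$, but packages it as $\pi_0 \circ (\HH{i}^b)^*$, where $\HH{i}^b$ is the coalgebra map induced by the $\AI$-homomorphism $i^b$ (with $i^b_0(1)=b$), so that multiplicativity of $\epsilon$ is automatic from duality rather than checked directly via the group-like identity $\Delta(e^b)=e^b\otimes e^b$ as you do. Both arguments then reduce the condition $\epsilon \circ \HH{d}^* = 0$ to the Maurer-Cartan equation $\HH{d}(e^b)=0$.
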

\begin{proof}
Consider the $\AI$-automorphsim $i^b$ defined above, and also an induced map
$\HH{i}^b: \HH{B}A \to \HH{B}A$. Then, the corresponding augmentation $\epsilon:(\HH{B}A)^* \to \Lambda$ is
defined as a composition of the algebra map $(\HH{i}^b)^*: (\HH{B}A)^* \to (\HH{B}A)^*$ with the
projection $\pi_0:(\HH{B}A)^* \to \Lambda$ to its component of length zero:
$$\epsilon = \pi_0 \circ \HH{i}^b.$$
Hence it remains to show that $\epsilon \circ \HH{d}^* =0$.
Given $f \in (\HH{B}A)^*$, we have
$$\epsilon \circ \HH{d}^*(f) =  \pi_0 \circ \HH{i}^b \circ \HH{d}^*(f) = f(\HH{d}(\HH{i}^b(1))) = f(\HH{d}(e^b)) = 0.$$
\end{proof}

\section{Non-trivial element in cyclic Floer homology}
In this section, we find a condition of an obstructed case which has non-trivial cyclic Floer homology.
Let $L$ be a Lagrangian submanifold which only admit non-positive Maslov index pseudo-holomorphic discs. 
Namely, we assume that $\mu(\beta) \leq 0$ for any homotopy class $\beta$ which is realized by $J$-holomorphic discs.
Consider the unital $\AI$-algebra $A$ on $(H_*(L,\NOV),m)$, which is
given by the Theorem A of \cite{FOOO}. 

We assume that $A$ is obstructed. In an unobstructed case, the same result holds true with much easier proof using
the last part of the proof given here, and in this case $PD[L]$ gives a non-trivial element of cyclic Floer homology.
Hence we assume that $A$ is obstructed. Now we find a non-trivial element in $HC_\bullet(L)$. Denote by $m_0 = m_0(1) \neq 0$ and also recall that $PD[L]$ defines a unit on this gapped filtered $\AI$-algebra. To
simplify expression we will write $L$ instead of $PD[L]$.

Note that $L$ is not a cycle in the bar complex as we have
  $$\HH{d}(L) =
m_0 \otimes L - L \otimes m_0 \neq 0.$$ Our idea is to consider the following additional terms to cancel these $m_0$ terms successively. 
Recall the cyclic symmetrization operation $N$ from (\ref{symop}) and define
$$\alpha_{2k+1}= N_{2k+1} \big(\underbrace{L \otimes m_0 \otimes L \otimes m_0 \otimes  \cdots \otimes m_0 \otimes L}_{2k+1} \big)$$
We let $\alpha_{1} = L$ and consider the sum
$$\alpha = \sum_{k=0}^{\infty} (-1)^k \alpha_{2k+1} \in \HH{B}^{cyc}H(L,\NOVO)$$
\begin{prop}\label{propnonzero}
With the above assumptions, the element $\alpha$ defines a non-trivial homology class in $HC_{(-1)}(L)$.
\end{prop}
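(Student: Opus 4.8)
The plan is to verify directly that $\HH{d}(\alpha) = 0$ and then show that $\alpha$ is not a boundary, so that $[\alpha] \neq 0$ in $HC_{(-1)}(L)$. First I would establish the fundamental computation for a single cyclic block. Since $L$ is a strict unit, we have $m_2(L,x) = (-1)^{\deg x}m_2(x,L) = x$ and $m_{k+1}(\cdots,L,\cdots)=0$ for $k\geq 2$ or $k=0$. The only nonzero $\AI$-operations applicable to the alternating word $L\otimes m_0 \otimes \cdots \otimes m_0 \otimes L$ are therefore $m_1(m_0)=0$ (which vanishes by the first relation in (\ref{dd})) and the $m_2$-operations $m_2(L,m_0)$ and $m_2(m_0,L)$, which collapse an adjacent $L\otimes m_0$ or $m_0\otimes L$ pair down to a single $\pm m_0$. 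I expect to find that applying $\HH{d}$ to the un-symmetrized generator $L\otimes m_0 \otimes \cdots \otimes L$ (length $2k+1$) produces exactly the terms needed to relate the $k$-th block to the $(k-1)$-th block, so that after applying the symmetrization operator $N$ and summing with the alternating signs $(-1)^k$, the boundary contributions telescope and cancel in pairs. The key technical point is that $N$ commutes suitably with $\HH{d}$ on these cyclically symmetric elements (using $b'N = Nb$ from the lemma preceding Definition \ref{def:cycbi}, adapted to the bar differential), so the cancellation at the un-symmetrized level propagates to $\HH{B}^{cyc}$.

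Next I would confirm the degree bookkeeping. Each $L = PD[L]$ has shifted degree $|L|' = -1$, and $m_0 \in C[1]$ has shifted degree $0$; a word of length $2k+1$ therefore carries total shifted degree $(k+1)(-1) + k\cdot 0 = -(k+1)$, but the relevant grading here is the one noted after Definition \ref{def:cycbi}, where the negative of the length gives the degree, placing $\alpha_{2k+1}$ in degree $-(2k+1)$ before accounting for the internal degrees; I would check carefully that all summands $\alpha_{2k+1}$ land in the same total degree $-1$ so that $\alpha$ is a homogeneous element of $HC_{(-1)}(L)$, consistent with the claimed statement. I would also verify convergence of the infinite sum in the completed complex $\HH{B}^{cyc}$: since $m_0 \in F^{\lambda'}C[1]$ for some $\lambda' > 0$ by (\ref{menergy}), the energy of $\alpha_{2k+1}$ is at least $k\lambda' \to \infty$, so $\alpha$ is a well-defined element of the energy-completed cyclic complex.

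The genuinely hard part will be showing that $\alpha$ is \emph{not} a boundary, i.e. that $[\alpha]\neq 0$. Closedness is a telescoping computation, but non-triviality requires a separating argument. Here I would exploit the non-triviality of the primary obstruction $m_0 \neq 0$ together with the energy filtration: the leading (lowest-energy) term of $\alpha$ is $\alpha_1 = L$, which sits in filtration level $0$, whereas every element in the image of $\HH{d}$ acting within the positive-length cyclic complex $\HH{B}^{cyc}_{\geq 1}$ either raises the number of tensor factors or carries strictly positive energy coming from an $m_k$ with $k\geq 2$ or from $m_0$. The strategy is to pass to the associated graded with respect to the energy filtration, as in the spectral sequence arguments of Proposition \ref{filteredhomo}, and show that the class of $L$ survives at the $\CE_1$ level because at energy zero the only differential is $\WH{m}_1 = 0$ (we are on the canonical model) and $L$ is not in the image of the energy-zero cyclic differential on a length-one element. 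Concretely, I would argue that if $\alpha = \HH{d}(\beta)$ for some $\beta$, then projecting to the energy-zero, length-one component forces $L$ to be a $\WH{d}$-boundary in the unfiltered cyclic complex of $\OL{C}$, contradicting the fact that $L$ generates a nontrivial class there; this is where the hypothesis that the obstruction is essential (obstructed case, $m_0\neq 0$) and the specific non-positive Maslov index assumption (guaranteeing $m_0$ has the right energy behavior and that no positive-index disc can produce a cancelling term) enter decisively.
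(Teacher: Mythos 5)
Your proposal has genuine gaps in both halves. On closedness: you list the operations applicable to $\alpha$ as $m_1$ (vanishing) and the $m_2$-collapses of adjacent $L\otimes m_0$ pairs, but you omit the insertion operator $\HH{m}_0$ --- the term of the bar differential that inserts $m_0(1)$ at every position of a word. This omission breaks the telescoping you describe: $\HH{m}_2(\alpha_{2k+1})$ consists of words of length $2k$, and the only available terms to cancel them are $\HH{m}_0(\alpha_{2k-1})$, which also have length $2k$. The paper's entire proof of $\HH{d}(\alpha)=0$ is the identity $\HH{m}_0(\alpha_{2k-1}) = \HH{m}_2(\alpha_{2k+1})$, verified by an explicit count of cyclic permutations on both sides; if only $m_1$ and $m_2$ acted, one would get $\HH{d}(\alpha)=\sum_k (-1)^k \HH{m}_2(\alpha_{2k+1}) \neq 0$. (A smaller slip: $m_0(1)$ has shifted degree $+1$, not $0$; with the correct value every $\alpha_{2k+1}$ has shifted degree $-1$, which is what makes the statement about $HC_{(-1)}(L)$ consistent.)

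On non-triviality, three problems. First, your claim that ``at energy zero the only differential is $\WH{m}_1=0$'' is false: the energy-zero differential is the full $\WH{d}=\sum_{k\geq 1}\WH{m}_k$, and on the canonical model $\OL{m}_2$ (the cup product) and the higher $\OL{m}_k$ survive, so the class of $L$ does not trivially persist on the $\CE_1$-page. Second, the ``fact'' that $L$ generates a nontrivial class in the unfiltered cyclic complex is not a known fact you can cite --- it is the heart of what must be proven, and its proof requires exactly the degree count (only $L$ has shifted degree $-1$, and $\OL{m}_k$ raises shifted degree by $1$) together with unitality and the observation that $N_2(L\otimes L)=L\otimes L - L\otimes L = 0$ in the cyclic complex; you never supply this. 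Third, and most seriously, the reduction ``project $\alpha=\HH{d}(\beta)$ to the energy-zero part'' is invalid in the setting of section 8, where the complex is taken over $\NOV$ (the paper works with $H^*(L,\NOV)$): a primitive $\beta$ may carry negative powers of $T$, and a positive-energy operation $m_{k,\beta'}$ applied to a negative-energy term of $\beta$ lands in energy zero, so one cannot conclude $L=\WH{d}(\beta^{(0)})$. This is precisely why the paper proves the stronger statement that \emph{no} $m_{k,\beta'}$, at \emph{any} energy, can have a multiple of $L$ in its image, and it is exactly there that the hypothesis $\mu(\beta')\leq 0$ is used: on inputs none of which is $L$ the output has cohomological degree at least $2-\mu(\beta')>0$, while a unit input kills every operation except $m_2(L,L)$, whose source $L\otimes L$ vanishes cyclically. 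In your scheme the Maslov hypothesis does no actual work (your closing parenthetical notwithstanding), which is a symptom that the reduction cannot be correct as stated.
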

\begin{proof}
Note that in the expression of cyclic permutation of $\alpha$, any
two of $m_0$ are always separated by $L$. Because $L$ is a unit of
the $\AI$-algebra, the only non-trivial operations of $\HH{d}$ on $\alpha$ are
$\HH{m}_0$, $\HH{m}_1$ and $\HH{m}_2$. Since $m_1(L) = m_1(m_0) =0$,
we have $\HH{m}_1=0$. Therefore, it suffices to prove the following
lemma to prove the proposition.
\begin{lemma}\label{twozero} We have
$$\HH{m}_0(\alpha_{2k-1}) = \HH{m}_2(\alpha_{2k+1}).$$
\end{lemma}
\begin{proof}
We will compute both sides and show that they are indeed equal.
We first point out that both $m_0$ and $L$ have shifted degree one, hence when they pass across each other 
the negative sign will appear. We also have from (\ref{unit}) that
\begin{equation}\label{m0m2}
m_2(m_0,L) = m_2(L,m_0) = m_0.
\end{equation}

The left hand side can be computed by the following elementary lemma, whose proof is left for
the reader. 
\begin{lemma} Suppose $a_i$ for $i=1,\cdots,2k+1$ are elements of degree one. Then we have
$$\HH{m}_0 \big( N_{2k+1}(a_1 \otimes a_2 \otimes \cdots \otimes a_{2k+1}) \big)
= N_{2k+2}\big( \HH{m}_0(a_1 \otimes a_2 \otimes \cdots \otimes a_{2k-1})
\otimes a_{2k+1} \big).$$
\end{lemma}

Now, by using the lemma, we can compute 
\begin{eqnarray*}
\HH{m}_0 (\alpha_{2k-1}) &=& \HH{m}_0 \big( N_{2k-1}(\underbrace{L \otimes m_0 \otimes L \otimes \cdots \otimes m_0 \otimes L}_{2k-1}) \big) \\
&=& N_{2k} \big(\HH{m}_0 ( L \otimes m_0 \otimes L \otimes \cdots \otimes m_0)  \otimes L) \big) \\
&=& N_{2k} \big( \underbrace{m_0 \otimes L \otimes m_0 \otimes L \otimes \cdots \otimes m_0  \otimes L}_{2k} \big) 
\end{eqnarray*}
$$= k(m_0 \otimes L \otimes m_0 \otimes L \otimes \cdots \otimes m_0  \otimes L) -
k( L \otimes m_0 \otimes L \otimes \cdots \otimes m_0  \otimes L \otimes m_0).$$
The second line follows from the previous lemma, and the third line follows
from the cancellation ( the terms with $\cdots m_0 \otimes m_0 \cdots$ occur twice with the opposite signs).

Now we compute  $\HH{m}_2(\alpha_{2k+1})$.
Note that $\alpha_{2k+1}$ may be divided into the following 5 types from the cyclic permutations.
\begin{eqnarray*}
\alpha_{2k+1} &=& N_{2k+1}\big(\underbrace{L \otimes m_0 \otimes L \otimes \cdots \otimes m_0 \otimes L}_{2k+1} \big) \\
&=& L \otimes m_0 \otimes L \otimes \cdots \otimes m_0 \otimes L \\
&&+ L \otimes L \otimes m_0 \otimes  \cdots \otimes L \otimes m_0 \\
&&+ m_0 \otimes L \otimes \cdots \otimes  m_0 \otimes L \otimes L \\
&&+ m_0 \otimes L \otimes  \cdots \otimes m_0 \otimes L \otimes L \otimes m_0 \otimes \cdots \otimes m_0 \otimes L + \cdots \\
&&+ L \otimes m_0 \otimes  \cdots \otimes m_0 \otimes L \otimes L \otimes m_0 \otimes \cdots \otimes L \otimes m_0 + \cdots
\end{eqnarray*}
Note that the last two types have $(k-1)$ such elements each.
For each type, one can easily compute using (\ref{m0m2})
\begin{equation*}
\begin{array}{l}
\displaystyle \HH{m}_2 \big(L \otimes m_0 \otimes L \otimes \cdots \otimes m_0 \otimes L)
= m_0 \otimes L \otimes \cdots \otimes m_0 \otimes L - L \otimes m_0 \otimes L \otimes \cdots \otimes m_0, \\
\displaystyle \HH{m}_2 \big( L \otimes L \otimes m_0 \otimes  \cdots \otimes L \otimes m_0    \big) = 0, \\
\HH{m}_2 \big( m_0 \otimes L \otimes \cdots \otimes  m_0 \otimes L \otimes L \big) = 0, \\
\HH{m}_2 \big(  m_0 \otimes L \otimes  \cdots \otimes m_0 \otimes L \otimes L \otimes m_0 \otimes \cdots \otimes m_0 \otimes L \big) = m_0 \otimes L \otimes \cdots \otimes m_0 \otimes L, \\
\HH{m}_2 \big( L \otimes m_0 \otimes  \cdots \otimes m_0 \otimes L \otimes L \otimes m_0 \otimes \cdots \otimes L \otimes m_0 \big) =-L \otimes m_0 \otimes L \otimes \cdots \otimes m_0.
\end{array}
\end{equation*}

Hence we have
$$ \HH{m}_2(\alpha_{2k+1}) =
k(m_0 \otimes L \otimes m_0 \otimes L \otimes \cdots \otimes m_0  \otimes L)
-k( L \otimes m_0 \otimes L \otimes \cdots \otimes m_0  \otimes L \otimes m_0)
$$
Hence this proves the Lemma \ref{twozero}.
\end{proof}
So far we have proved that $\HH{d}(\alpha) =0$.
To prove the Proposition \ref{propnonzero}, we need to prove that $\alpha$ is a non-trivial element of the cyclic Floer homology.
We will need the assumption that Maslov index is non-positive for $J$-holomorphic discs for this purpose.

Recall that we have
$$m_k = \sum_{\beta \in G} T^{\lambda(\beta)}e^{\mu(\beta)/2} m_{k,\beta}.$$
Here $m_{k,\beta} : (H^*(L)[1])^{\otimes k} \to H^*(L)[1]$ has
(after degree shift) degree $1-\mu(\beta)$. And before degree shift, $m_{k,\beta}$ has degree $(2-\mu(\beta)-k)$.

Note that the available degrees of elements in $H^*(L)[1]$ are from $(-1)$ to $n-1$, and the only degree $(-1)$ element is $L$.
Also the shifted degree of $m_{k,\beta}(x_1,\cdots,x_k)$ is 
$$|x_1|' + |x_2|' + \cdots + |x_k|' + 1 - \mu(\beta).$$
Hence, if $x_i \neq L$ for all $i$, then $|x_i|' \geq 0$ hence
the $m_{k,\beta}(x_1,\cdots,x_k)$ has degree (before shift) $\geq 2 - \mu(\beta) > 0$. Hence they cannot produce $L$ as its image.

But as it is unital, if one of $x_i=L$, then most of the $m_k$ operations vanish (see (\ref{unit})) and the only non-trivial operation which can have $L$ as its image is
$m_2(L,L)$. But $L\otimes L$ is not an element of
$\HH{B}^{cyc}(H^*(L,\NOV))$, since
$$N_2(L\otimes L)= L\otimes L - L \otimes L = 0.$$
Hence, this proves that $\alpha$ is a non-trivial homology element in cyclic Floer homology, as
the leading term of $\alpha$ is not
in the image of $\HH{d}$.
This proves the proposition.
\end{proof}

We remark that similar approach in the cyclic Chevalley-Eilenberg complex does not work.
For example, one may check that the symmetric sum of the expression $L\otimes m \otimes L$ vanishes due to the
cancellation of pairs occurring in the permutation of two $L$'s.

The element $\alpha$ can be also seen as a cycle of the bicomplex given in the definition \ref{def:cycbi}.
To see this, note that $\alpha$ has degree $(-1)$ and satisfies $(1-t)\alpha=0.$
One should put $\alpha$ in the augmented bicomplex of the one given in the definition \ref{def:cycbi}.
Namely, consider $\alpha$ as an element in $C_{-1,-1}(A,A)$. As $(1-t)\alpha=0$, we can find $\alpha'_0$ with $N(\alpha')=\alpha$.
Also as $b'(\alpha)=0$, from the commutative diagram of the bicomplex, we
have $N(b(\alpha'_0))=0$, hence one can find $\alpha'_1$ with $(1-t)\alpha'_1 = b(\alpha'_0)$. One can continue in a similar way
to obtain a cycle in cyclic bicomplex. 

The original motivation for our interest in this non-trivial element was to prove the non-displaceability of 
Lagrangian submanifolds with Maslov class zero. To prove such a result, one may prove the non-vanishing of
the Hochschild homology of the $\AI$-algebra of such a Lagrangian submanifold.
Unfortunately, we do not know how to prove such a non-vanishing property of Hochschild homology of an $\AI$-algebra using $\alpha$. 
We remark that the Connes exact sequence in the lemma \ref{conexact} does not imply the desired non-vanishing property.

\section{Dualization}\label{sec:dual}
As $\AI$-algebras (resp. $\LI$-algebras) are given by coalgebras with codifferentials, the suitable dualization provides  non-commutative differential graded algebras (resp. commutative DGA) or a formal manifold in the language of Kontsevich and soibelman \cite{KS}. 
 This point of view is particularly interesting to
study homological algebras of these infinity algebras (see \cite{HL}) or homotopy cyclic infinity structures (see \cite{KS},\cite{C3}).

As mentioned in the introduction, in contact geometry, the dual language has been mostly used (\cite{Che}, \cite{EGH} for example) and
it also has a certain advantage as algebras can be easier to deal with than coalgebras.  But as we deal with Novikov fields, the dualization process is more complicated.

We  explain an appropriate procedure to take a dual of
a completed infinite-dimensional space over $\NOVE$.
We will work with $\NOVE$ in this section, as we would like to work with field coefficients (see \ref{def:nove}) for dualization.

Let $V$ be a vector space over the field $\NOVE$. Here, we assume $V$ have at most countably many generators $\{v_i\}_{i\in \NN}$ and $$V = \oplus_i (\NOVE <v_i>).$$ 
We will consider $V$ as a topological vector space
by defining a fundamental system of neighborhoods of $V$ at $0$:
first define the filtrations $F^{>\lambda}V$ as
$$F^{>\lambda}V = \{ \sum_{j=1}^k a_j v_{i_j}| a_i \in \NOVE,\tau(a_i) > \lambda, \; \forall i\}.$$
Here $\tau$ is the valuation of $\NOVE$ which gives the minimal exponent of $q$ defined in \ref{def:tau}. 
We regard $F^{>\lambda}V$ for $\lambda=0,1,2,\cdots$ as fundamental system of neighborhoods at $0$, and neighborhoods at $v \in V$ then are given by $v + F^{>\lambda}V$.

The completion of $V$ with respect to energy, $\HH{V}$, has been considered throughout the paper, and
it can be also considered as a completion using the Cauchy sequences in $V$ in this topological vector space
(see \cite{AM} for example). Let $\HH{F}^{>\lambda}V$ be the induced open set of $\HH{V}$ from $F^{>\lambda}V$ for each $\lambda$.

This topology has been introduced to consider the topological dual space $\HH{V}^*$ of $\HH{V}$.
We define $\HH{V}^*$ to be the set of all continuous $\NOVE$-linear maps  from $\HH{V}$ to $\NOVE$:
$$\HH{V}^* = Hom_{cont}(\HH{V}, \NOVE).$$
More explicitly we can describe $\HH{V}^*$ in the following way.
Denote by $v_i^* \in \HH{V}^*$ a map which is defined as $\NOVE$-linear extension of
$$v_i^*(v_i)=1, v_i^*(v_j)=0 \;\; \textrm{for } \; j\neq i.$$
The map $v_i^*$ is continuous and so is any finite sum of such $v_i^*$'s.

\begin{lemma}\label{dual}
For any $\lambda_0 \in \RR$, any map given by an infinite sum
$$v^* = \sum_{j=1}^\infty a_jv_{i_j}^*, \;\; \textrm{with}\;\; a_i \in \NOVE, \tau(a_i) > \lambda_0, \; \forall i,$$
is always  continuous.

Moreover,  a map given by an infinite sum whose $\tau(a_i)$'s are not bounded below, is not continuous.
\end{lemma}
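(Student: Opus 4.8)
The plan is to verify both assertions directly from the definition of continuity for $\NOVE$-linear maps between these filtered topological vector spaces: such a map $f:\HH{V}\to\NOVE$ is continuous if and only if for every $\mu$ there is a $\lambda$ with $f(\HH{F}^{>\lambda}V)\subseteq F^{>\mu}\NOVE$. The structural fact I will use throughout is that $\tau$ is a valuation with values in the discrete set $\ZZ$ (it is the minimal $e$-exponent of \ref{def:tau}), so $\tau(xy)=\tau(x)+\tau(y)$, $\tau(x+y)\geq\min(\tau(x),\tau(y))$, and for any summable family $\{x_j\}$ in the completion one has $\tau(\sum_j x_j)\geq\inf_j\tau(x_j)$ because each level $F^{\geq c}$ is closed. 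Discreteness also lets me replace a strict bound $\tau(\cdot)>\lambda$ by $\tau(\cdot)\geq\lambda+1$ whenever $\lambda$ is taken integral.

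For the continuity statement, I would first confirm that $v^*=\sum_j a_jv_{i_j}^*$ is well defined on all of $\HH{V}$. Writing a general element as $w=\sum_i c_iv_i$, membership in $\HH{V}$ forces $\tau(c_i)\to\infty$, and since the $i_j$ are distinct this gives $\tau(c_{i_j})\to\infty$; hence $v^*(w)=\sum_j a_jc_{i_j}$ converges because $\tau(a_jc_{i_j})=\tau(a_j)+\tau(c_{i_j})>\lambda_0+\tau(c_{i_j})\to\infty$, using only that $\tau(a_j)>\lambda_0$ is bounded below. For continuity, given $\mu$ I choose an integer $\lambda$ large enough that $\lambda+\lfloor\lambda_0\rfloor+1>\mu$. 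Then for $w\in\HH{F}^{>\lambda}V$ every $\tau(c_{i})\geq\lambda+1$ and every $\tau(a_j)\geq\lfloor\lambda_0\rfloor+1$, so each term satisfies $\tau(a_jc_{i_j})\geq\lambda+\lfloor\lambda_0\rfloor+2>\mu$, and therefore $\tau(v^*(w))\geq\inf_j\tau(a_jc_{i_j})>\mu$, i.e. $v^*(w)\in F^{>\mu}\NOVE$. This is exactly the required estimate, so $v^*$ is continuous.

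For the second statement I negate the continuity criterion. If $\tau(a_j)$ is not bounded below, there is a subsequence with $\tau(a_j)\to-\infty$. I fix $\mu=0$ and show no $\lambda$ works. Given any $\lambda$, choose an index $j$ with $\tau(a_j)<-\lceil\lambda\rceil-1$ and test $v^*$ on the single generator $w=e^{\lceil\lambda\rceil+1}v_{i_j}$, which lies in $\HH{F}^{>\lambda}V$ since $\tau(e^{\lceil\lambda\rceil+1})=\lceil\lambda\rceil+1>\lambda$. Here $v^*(w)=a_j\,e^{\lceil\lambda\rceil+1}$ is a single product, so no convergence issue arises even though $v^*$ need not be globally defined, and $\tau(v^*(w))=\tau(a_j)+\lceil\lambda\rceil+1<0=\mu$. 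Thus $v^*(\HH{F}^{>\lambda}V)\not\subseteq F^{>\mu}\NOVE$ for every $\lambda$, and $v^*$ is not continuous.

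The only delicate point is the interaction between the strict inequality in the definition of $F^{>\lambda}V$ and the passage to an infimum over an infinite family of terms in the first part; this is precisely what the discreteness of $\tau$ resolves, since it quantizes the strict inequalities so that an infimum of valuations all exceeding $\mu$ by at least one unit still exceeds $\mu$. Everything else is a routine manipulation of the valuation.
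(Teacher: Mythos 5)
Your overall strategy is the same as the paper's: for the first assertion you verify the continuity criterion directly, pairing the lower bound on the $\tau(a_j)$ with the high valuation of inputs from a small neighborhood of $0$; for the second you exhibit inputs of arbitrarily high valuation whose images have bounded valuation. In fact your part-2 idea of testing on a single generator is cleaner than the paper's own test element: the paper evaluates $w^*$ on an infinite sum, producing a series all of whose terms sit at valuation exactly $\lambda-1$, which strictly speaking does not even converge in $\NOVE$, whereas your evaluation is a single product.

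There is, however, a genuine gap: your argument is built on reading $\tau$ as the $\ZZ$-valued minimal $e$-exponent, taking the formula in (\ref{def:tau}) literally, and both halves of your proof lean on that discreteness. That formula contains a typo: the surrounding text defines $\tau$ as the \emph{minimum energy}, i.e.\ the minimal $T$-exponent, which takes arbitrary real values. This is the valuation defining the energy filtration $F^{\lambda}$ with $\lambda\in\RR$ used throughout the paper; it is the one for which $\HH{V}$ (explicitly identified, just before the lemma, with the energy completion ``considered throughout the paper'') is complete; and it is the one the paper's own proof uses when it forms $q^{\lambda-\tau(b_j)-1}$ with real exponents. Under this correct reading: (i) in part 1, the steps ``$\tau(c_i)>\lambda\Rightarrow\tau(c_i)\geq\lambda+1$'' and ``$\tau(a_j)>\lambda_0\Rightarrow\tau(a_j)\geq\lfloor\lambda_0\rfloor+1$'' are false, and your choice of $\lambda$ can genuinely fail (e.g.\ $\mu=9.7$, $\lambda_0=0.5$, $\lambda=9$ satisfies your condition yet permits terms of valuation $9.6<\mu$); the repair is to choose $\lambda>\mu-\lambda_0$, so that every term of $v^*(w)$ has valuation $>\lambda+\lambda_0>\mu$ --- strictness then survives the infimum automatically, because convergence forces $\tau(a_jc_{i_j})\to\infty$, so the infimum is a minimum over finitely many terms (equivalently, sums of elements of $F^{>\mu}\NOVE$ stay in $F^{>\mu}\NOVE$ in this non-Archimedean setting); no discreteness is needed, and this is exactly the paper's one-line estimate $v^*(\HH{F}^{>\lambda-\lambda_0}V)\subset F^{>\lambda}\NOVE$. (ii) In part 2 the gap is fatal as written: powers of $e$ have energy valuation zero, so $e^{\lceil\lambda\rceil+1}v_{i_j}$ does \emph{not} lie in $\HH{F}^{>\lambda}V$ for $\lambda\geq 0$, and your test vectors never enter the required neighborhoods. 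The fix is mechanical: take $w=T^{c}v_{i_j}$ with $c>\lambda$ and $c\leq-\tau(a_j)$ (possible since the $\tau(a_j)$ are unbounded below), which gives $\tau(v^*(w))=\tau(a_j)+c\leq 0$, contradicting continuity at $0$.
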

\begin{proof}
Note that for any open set $F^{>\lambda} \NOVE$ of $\NOVE$,
we have $v^*(\HH{F}^{> \lambda - \lambda_0}V) \subset F^{>\lambda} \NOVE$. Hence
$v^*$ is continuous. For the second assertion, consider 
$w^* = \sum_{j=1}^\infty b_jv_{i_j}^*$ with
$ \tau(b_j) \to -\infty$ as $j \to \infty$.
Then, for a given open set $F^{>\lambda} \NOVE$ and for any $v \in \HH{V}$ and any $\lambda_1 \in \RR$,
we can find $y \in v + \HH{F}^{>\lambda_1}(V)$ such that $w^* (y) \notin F^{>\lambda} \NOVE$.
Such $y$ can be chosen for example as 
$$v+ \sum_{j=s}^\infty v_{i_j} q^{\lambda - \tau(b_j) -1},$$
where $s$ is any number with $(\lambda - \tau(b_j) -1 > \lambda_1)$ for all $j>s$.
One can find such $s$ as $(\lambda - \tau(b_j) -1)$ converges to infinity as $j \to \infty$.
\end{proof}

The above lemma explains what are the elements of $\HH{V}^*$. Intuitively, the dual elements are allowed to have
infinite sums with bounded energy since when we evaluate them with $\HH{V}$, the input  already
has energy converging to infinity in its infinite sum.

\section{Chevalley-Eilenberg cohomology}
In this section, we consider the dual of (cyclic) Chevalley-Eilenberg homologies, which we call
(cyclic) Chevalley-Eilenberg cohomology.  Then, we express the cochain complex in a more
explicit form and compare with the work of Cornea and Lalonde in \cite{CL}. 
We make computations of (cyclic) Chevalley-Eilenberg cohomology when the $\AI$-algebra
has non-vanishing primary obstruction cycle, and show the vanishing of cohomology using the natural algebra structure on them.

\subsection{Cyclic Chevalley-Eilenberg cohomology}
We apply the construction in the previous section to define the dual of the cyclic Chevalley-Eilenberg chain complexes introduced in the Definition \ref{def:cycho}.
Recall that we have a bar subcomplex $(\HH{E}C,\HH{d})$ over $\NOVO$-coefficient from section 2. We may change the coefficient of
$(\HH{E}C,\HH{d})$ to be $\NOVE$ and denote it again with the same notation.
 We assume that $C$ has at most countable generators. 
We regard $\HH{E}C$ as a topological vector space as in the previous subsection, and
take the topological dual
$$\HH{E}C^* := Hom_{cont}(\HH{E}C,\NOVE).$$
One can see that $\HH{d}^*$ also naturally defines a differential and
$(\HH{E}C^*,\HH{d}^*)$ forms a chain complex.

Recall that $\NOVE$ is a field. By following a standard proof of the universal coefficient theorem
(see for example \cite{DK}), we have 
\begin{lemma}\label{ceiso}
There exists a natural map from the homology of $(\HH{E}C^*,\HH{d}^*)$ to the topological dual of
the homology of $(\HH{E}C,\HH{d})$ which is an isomorphism.
$$H_\bullet(\HH{E}C^*,\HH{d}^*) \stackrel{\cong}{\longrightarrow} \big(H_\bullet(\HH{E}C,\HH{d}) \big)^*$$
The same statement holds for $(\HH{E}_{\geq 1}C,\HH{d})$ also.
\end{lemma}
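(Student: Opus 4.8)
The plan is to treat Lemma \ref{ceiso} as a universal coefficient theorem for the \emph{topological} dual of a completed $\NOVE$-vector space: I would dualize the two short exact sequences attached to the complex, as in the classical proof, but carry the energy topology of Section 9 throughout. Write $Z = \ker \HH{d}$ and $B = \mathrm{im}\,\HH{d}$ inside $\HH{E}C$, so that $H_\bullet(\HH{E}C,\HH{d}) = Z/B$. The comparison map is built by hand: a $\HH{d}^*$-cocycle is a continuous functional $f \in \HH{E}C^*$ with $f\circ\HH{d} = 0$, hence $f$ vanishes on $B$, so $f|_Z$ descends to a functional on $Z/B$ that is continuous for the quotient topology; set $\Phi([f]) = \overline{f|_Z}$. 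If $f = \HH{d}^* g = g\circ\HH{d}$ is a coboundary then $f|_Z = 0$, so $\Phi$ is well defined, $\NOVE$-linear, and degree preserving.

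Bijectivity is then checked exactly as in the algebraic case. For injectivity, $\Phi([f]) = 0$ forces $f$ to vanish on all of $Z$, so $f$ factors as $f = \tilde g\circ\HH{d}$ through the surjection $\HH{d}\colon \HH{E}C \to B$; extending $\tilde g$ to a continuous functional $g$ on $\HH{E}C$ realizes $f = \HH{d}^* g$ as a coboundary. For surjectivity, a continuous functional on $Z/B$ pulls back along the quotient $Z \to Z/B$ to a continuous functional on $Z$, which I extend to a continuous $f$ on $\HH{E}C$; the extension still kills $B \subset Z$, so $\HH{d}^* f = 0$ and $\Phi([f])$ is the prescribed class. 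Everything therefore reduces to two topological inputs: that the inclusions $B \hookrightarrow Z \hookrightarrow \HH{E}C$ admit continuous splittings, and that a continuous $\NOVE$-linear functional on a closed subspace of $\HH{E}C$ extends continuously to $\HH{E}C$.

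The second input I would supply from the filtered structure: each of $\HH{E}C$, $Z$, $B$ is a $\tau$-completion of a direct sum of degree shifts of $\NOVE$, hence carries a filtration-orthogonal basis in the non-archimedean sense, and relative to such a basis one extends a functional by zero off a closed complement, the continuity being read from the lower bound on $\tau$ of its coefficients precisely as in Lemma \ref{dual}. The genuine obstacle is the first input, the strictness of $\HH{d}$ for the energy filtration: one needs $\mathrm{im}\,\HH{d}$ closed and $\HH{d}$ open onto its image (equivalently $\mathrm{im}\,\HH{d}^*$ closed and $Z^\perp = \mathrm{im}\,\HH{d}^*$), without which $\Phi$ would only land in the dual of $Z/\overline{B}$ and could fail to be surjective or injective. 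This is the same circle of completion issues flagged for the spectral sequences earlier in the paper, and in the gapped situation at hand it is exactly what the energy-orthogonal bases are meant to control. Once it is in place, the argument applies verbatim to $(\HH{E}_{\geq 1}C,\HH{d})$, which is again a completed filtered $\NOVE$-space preserved by $\HH{d}$.
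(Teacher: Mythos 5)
Your proposal takes essentially the same route as the paper: the paper's entire proof consists of noting that $\NOVE$ is a field and invoking the standard universal coefficient argument (citing Davis--Kirk), which is exactly the dualize-the-splittings scheme you work through by hand. The two topological inputs you isolate --- non-archimedean extension of continuous functionals, and strictness of $\HH{d}$ with respect to the energy filtration (closedness of $\mathrm{im}\,\HH{d}$ and openness of $\HH{d}$ onto its image) --- are left entirely implicit in the paper's one-line proof, so you have not omitted anything the paper actually supplies; if anything, the strictness point you flag as ``the genuine obstacle'' is a real subtlety that the paper's appeal to the purely algebraic universal coefficient theorem silently glosses over.
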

In fact, the difference between $H_\bullet(\HH{E}C^*,\HH{d}^*)$ and $H_\bullet(\HH{E}_{\geq 1}C^*,\HH{d}^*)$ can be
easily seen as follows. In $\HH{E}C^*$, there exists the linear functional $\HH{E}C \to \NOVE$ given by the projection to the length zero component and hence identity on $E_0C = \NOVE$.
Note that we have a short exact sequence
$$0 \to (\HH{B}_{\geq 1} C,\HH{d}) \to (\HH{B} C,\HH{d}) \to (\NOVE,0) \to 0.$$
By considering its dual exact sequence and its associated long exact sequence, we have
$$0 \to H_1(\HH{E}C^*,\HH{d}^*) \to H_1(\HH{E}_{\geq 1}C^*,\HH{d}^*) 
\to \NOVE $$
$$\to  H_0(\HH{E}C^*,\HH{d}^*) \to H_0(\HH{E}_{\geq 1}C^*,\HH{d}^*) \to 0.$$
The generator of $\NOVE$ in the middle of the above will correspond to 
$H_1(\HH{E}_{\geq 1}C^*,\HH{d}^*)$ if $m_0(1) \neq \HH{d}(\alpha)$ for any $\alpha \in \HH{E}_{\geq 1}C$.
If $m_0(1) = \HH{d}(\alpha)$, we have a non-trivial element $(1 - \alpha) \in H_0(\HH{E}C^*,\HH{d}^*)$.

Despite the lemma \ref{ceiso}, we remark that there is an advantage to consider the cohomology theory in this case
as algebras are generally easier to work with than coalgebras and this will be essentially used to prove
the vanishing results later.  More precisely, from the Lemma \ref{lem:coalge}, we have 
$$\HH{E}C^* \otimes \HH{E}C^* \to (\HH{E}C \otimes \HH{E}C)^* \stackrel{\Delta^*}{\longrightarrow} \HH{E}C^*$$
This provides an algebra structure on $\HH{E}C^*$ with a unit $1$, where the unit is a map 
$\HH{E}C \to \NOVE$ which is identity on $E_0C = \NOVE$ and vanishes elsewhere.
For later arguments, it is essential to have a unit of commutative DGA.
Hence we will consider $(\HH{E}C^*,\HH{d}^*)$ mostly, and call it the extended cyclic Chevalley-Eilenberg cohomology.

Now, we express more explicitly the dual space $\HH{E}C^*$ with generators.
Suppose that the $\NOVE$-module $C$ has generators $\{e_i\}_{i\in I}$ where $I$ is
at most a countable set. We may also assume that the valuation $\tau(e_i)=0$ and $e_i$ is homogeneous 
of degree $|e_i|'$.
We write the dual $e_i^* = x_i$ and define the degree of $x_i$ as
$|x_i|' = - |e_i|'$. We may write 
$$[e_{i_1},\cdots,e_{i_k}]^* =x_{i_1}x_{i_2}\cdots x_{i_k}$$
where we define the variables $x_i$'s to be graded commutative:
$$x_i \cdot x_j = (-1)^{|x_i|'|x_j|'} x_j \cdot x_i.$$
We call the number of variables $x_i$'s in the monomial to be its length.

Consider the vector space $\RR<x_i>_{i\in I}$ generated by these variables and
consider also the free graded commutative algebra over the vector space $\RR<x_i>_{i\in I}$
and denote them by $S(\RR<x_i>)$, in which elements are given by finite sum of monomials of finite length.
By the lemma \ref{dual}, we can give the following definition.
\begin{definition}
We define the extended cyclic Chevalley-Eilenberg cochain $\HH{E}C^*$ alternatively as 
$$CE^\bullet(C) = \big(S(\RR<x_i>_{i\in I}) \otimes \NOVE \big)^\wedge$$
where in the completion $(\;)^\wedge$, we allow infinite sums with the valuations of its coefficients bounded from below.
Coboundary operation is given by $\HH{d}^*$, to define extended Chevalley-Eilenberg cohomology.
\end{definition}
We remark that Cornea and Lalonde has announced a cluster homology theory of Lagrangian submanifolds in \cite{CL}.
They have used the Morse function and gradient flows and allowed several disc components connected by Morse flows.
The construction of \cite{FOOO} is based on singular chains rather then Morse functions and gradient flows.
Here, the analogy is that one may think of singular chains as unstable manifolds of the given Morse function.

To obtain the actual cluster complex of \cite{CL}, one should take the topological dual of cyclic Chevalley-Eilenberg complex of
the following $\AI$-algebra recently constructed by Fukaya, Oh, Ohta and Ono.
\begin{theorem}[\cite{FOOO3} Theorem 5.1]
Let $L$ be a relatively spin Lagrangian submanifold in a closed symplectic manifold $(M,\omega)$. Then there exist
a Morse function $f$ such that the Morse complex $CM^*(f) \otimes \NOVO$ carries a structure of a filtered $\AI$-algebra, which
is homotopy equivalent to the filtered $\AI$-algebra constructed in \cite{FOOO}
\end{theorem}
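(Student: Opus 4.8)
The plan is to reduce the statement to the homotopy-transfer machinery already recalled in the excerpt, together with the classical comparison between the Morse complex and the chain model on which \cite{FOOO} build their $\AI$-algebra. Write $(C(L),m)$ for the filtered $\AI$-algebra of Theorem A, realized say on de Rham forms (or smooth singular chains) of $L$ with $\NOVO$-coefficients, and recall that $\OL{m}_1$ is the classical differential, so that $\OL{m}_1$-cohomology is $H^*(L)$ and $\OL{m}_0=0$ while $m_0\in\NOVO^+$. First I would fix $f$ together with a generic Morse--Smale metric and invoke the classical fact that $(CM^*(f),\partial)$ is chain homotopy equivalent to $(C(L),\OL{m}_1)$: there is a projection $\pi:C(L)\to CM^*(f)$, an inclusion $\iota:CM^*(f)\to C(L)$, and a homotopy $G$ with $\mathrm{id}-\iota\pi=\OL{m}_1 G+G\OL{m}_1$ and $\pi\iota=\mathrm{id}$. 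In the de Rham picture $\pi$ integrates a form over the unstable manifolds of $f$ and $\iota$ assigns a smoothed representative to each critical point.

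Next I would transport the filtered $\AI$-structure from $C(L)$ to $CM^*(f)\otimes\NOVO$ by the homological perturbation (homotopy transfer) lemma, which is the same device underlying Theorem 23.2 of \cite{FOOO}. Summing over planar trees whose internal vertices carry the $m_k$, whose internal edges carry $G$, whose leaves carry $\iota$ and whose root carries $\pi$ produces operations $\{m_k\}$ on $CM^*(f)$ together with an $\AI$-homomorphism $\iota_*$. Since $\pi,\iota,G$ are energy-zero and $\OL{m}_0=0$, every tree contributing at a given positive energy contains at least one positive-energy vertex; gappedness (discreteness of $\pi(G)$ and positivity of disc energies) then bounds the number of positive-energy vertices below a fixed energy, while the classical subtrees with a fixed number of attaching points are finite. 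Hence each transferred tree sum converges in the energy-filtration topology, the transferred operations are again gapped and filtered with $m_0\in F^{\lambda'}$ for some $\lambda'>0$, and homotopy unitality is inherited because $\pi\iota=\mathrm{id}$ sends the unit $PD[L]$ to the fundamental Morse cocycle. By Proposition \ref{filteredhomo} (or directly from the transfer lemma) $\iota_*$ is then a filtered $\AI$-homotopy equivalence between $CM^*(f)\otimes\NOVO$ and the model of \cite{FOOO}.

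Alternatively, and this is presumably the route taken in \cite{FOOO3}, one realizes these transferred operations geometrically by counting configurations of $J$-holomorphic discs with boundary on $L$ linked by gradient trajectories of $f$ (a pearl/cluster moduli space), and establishes the homotopy equivalence through a one-parameter cobordism interpolating the Morse attachments with the chain-level evaluation used in \cite{FOOO}; the $\AI$-relations then drop out of the description of the codimension-one boundary of the one-dimensional moduli via Gromov compactness and gluing.

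The main obstacle in either case is analytic rather than algebraic. In the transfer approach the delicate point is exactly the convergence just sketched: one must check that the gapped condition genuinely forces the infinite tree sums over $\NOVO$ to converge in the presence of $m_0\neq0$, which is where the positivity of disc energies and the discreteness of $\pi(G)$ are indispensable. In the geometric approach the difficulty is transversality, gluing and orientations for the mixed Morse-flow/holomorphic-disc moduli spaces, exactly as in \cite{FOOO}. Once convergence and the $\AI$-relations are secured, the homotopy equivalence with the filtered $\AI$-algebra of \cite{FOOO} is immediate.
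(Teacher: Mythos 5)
Your first route is, in substance, the construction that the paper attributes to \cite{FOOO3}: the operations on the Morse complex are produced by the homological perturbation lemma / sum-over-trees formula, and your convergence argument via gappedness (positive-energy vertices are bounded in number below any energy level, and trees with a fixed number of attaching points are finite) is the right reason the transfer survives the presence of $m_0$. The differences are worth recording. First, you transfer the full filtered $\AI$-algebra of Theorem A in one shot, taking it as a black box; the paper explains that \cite{FOOO3} instead works inductively with $A_{n,K}$-structures --- the geometric (Kuranishi, fiber-product) construction only yields an $A_{n,K}$-structure on inductively constructed chain subcomplexes $\chi_{g}$ for finite $(n,K)$, and the genuine $\AI$-structure is then obtained \emph{purely algebraically} by pulling back higher structures along $A_{n,K}$-equivalences; this inductive framework is not a cosmetic point, since it is what makes the transversality and perturbation choices coherent, but granting Theorem A your shortcut is legitimate. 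Second, you guess that the ``presumable'' route of \cite{FOOO3} is the direct geometric pearl/cluster moduli count; according to the paper it is the opposite: the construction is the algebraic transfer, and the pearl picture (gradient flow trees whose vertices are pseudo-holomorphic discs) only emerges as an \emph{interpretation} of the sum-over-trees formula, valid for a Morse function chosen compatibly with a triangulation of $L$ --- precisely so as to avoid the unresolved analytic difficulties (transversality and gluing for mixed moduli) that you correctly flag in your second route and that the paper attributes to the cluster theory of \cite{CL}. Third, a small caveat: the chain model of \cite{FOOO} underlying Theorem A is a countably generated singular chain subcomplex, not the de Rham complex (the de Rham comparison, Theorem \ref{thm:derham}, is only invoked in this paper in the absence of quantum contributions), so your maps $\pi,\iota,G$ should be set up against singular chains, e.g.\ via unstable manifolds of the triangulation-compatible $f$, rather than by integration of forms.
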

Recall that the construction of the $\AI$-algebra in the above theorem is given by first constructing $A_{n,K}$-algebra
for each $(n,K)$ and for $(n,K) \prec (n',K')$, $A_{n,K}$-equivalence between such $A_{n,K}$ and $A_{n',K'}$-algebras.
From this, they construct $\AI$-algebra in a purely algebraic way, by pulling back higher $A_\infty$ structures.

Hence, the following comparison will only hold up to large $(n,K)$. Now, to construct $A_{n,K}$-algebra,
the chains $\chi_{g}$ are constructed inductively for $g \in \NN$ so that for any $(g_0)$, there exist
$g_1 >g_0$ such that they construct $A_{n,K}$ algebra structure on $\chi_{g}$ with the following properties.
Namely, $A_{n,K}$ structure is defined on $\chi_{g_0}$ in a geometric way (using Kuranishi perturbation and fiber products), and
then they are extended to $\chi_g$ algebraically using the sum over tree formula or
homological perturbation lemma.

In \cite{FOOO3}, they consider a specific choice of $f$ constructed in a way compatible with triangulation of $L$, so that
the sum over tree formula may be interpreted as counting gradient flow trees whose vertices represent pseudo-holomorphic discs and whose gradient flows
represent gradient flow lines (see their figure 6 of \cite{FOOO3}).
This is exactly as in the cluster complex case (where the only difference is the direction of flows).
Hence, by taking the dualization as in the previous subsection, the construction of \cite{FOOO3} becomes in fact quite
similar to that proposed by Cornea and Lalonde (for large $(n,K)$).  We refer readers to \cite{FOOO3} for more details on
their construction.

In any case, after taking the dual of \cite{FOOO3},  we obtain the completed symmetric algebra on generators and obtain differential graded commutative algebra (comm. DGA) as in \cite{CL}. Cornea and Lalonde also introduced symmetric fine Floer homology which
is defined for a pair, Lagrangian submanifold and its hamiltonian isotopy image. This correspond to the
Chevalley-Eilenberg cohomology for $\LI$-modules which will be explained in the next subsection.

But there is a subtlety regarding the filtrations. Namely, the filtration we use here is different from that of Cornea and Lalonde.
Here we recall their filtration of the cluster complex of \cite{CL} equation (1):
\begin{equation}\label{CLfilt}
L^k(S\QQ<Crit(f)[1]>\otimes \NOV> = Q<x_1x_2\cdots x_s e^{\lambda}:s \geq k \;or \;\omega(\lambda) \geq k>
\end{equation}
Hence infinite sums either have length of each term converging to infinity or energy converging to infinity with the
above filtration (\ref{CLfilt}). In particular, infinite sum of monomials whose length goes to infinity while energy converging to negative
infinity is allowed.

But in our case, due to the Lemma \ref{dual}, we do not allow  such infinite sums of unbounded negative energy.
And as we will see, this will cause different behaviors of resulting homology theories. 

\begin{remark}
We have been informed by Cornea that the filtration used here also can be used in the cluster homology theory, and
we thank him for his comments. But we do not know whether the filtration used in \cite{CL} can be used here to provide an invariant homology theory
as we prove the invariance before we take the dualization and then use Lemma \ref{ceiso}.
\end{remark}

We define $\WT{\tau}:CE^\bullet(C) \to \RR$ as in (\ref{def:tau}),
which gives the minimal exponent of $q$ used in the coefficients of an element in $CE^\bullet(C)$.
The product structure of $\HH{E}C^*$ corresponds to the natural product structure on
$CE^\bullet(C)$ which may be considered as a usual product of formal series of commuting variables.

As we work on DGA, we can use the clever argument from the work of Cornea and Lalonde:
\begin{prop}[cf. \cite{CL} Proposition 1.3]\label{lem:cevanish}
Suppose that for some $x\in CE^\bullet(C)$, we have
$$\HH{d}^*(x) = 1 + h, $$
for $h\in CE^\bullet(C)$ with $\WT{\tau}(h) \geq 0$ and $h$ has only terms with positive length. 
Then the homology of $(CE^\bullet(C),\HH{d}^*)$ vanishes
\end{prop}
\begin{remark}
The condition $\WT{\tau}(h) \geq 0$, which is rather restrictive, is not required in \cite{CL} due to
the different choice of filtration. 
\end{remark}
\begin{proof}
The condition on $h$ guarantees that the following is an element of  $CE^\bullet(C)$.
$$h'=\sum_{j=0}^\infty  (-1)^j h^j.$$
As $\HH{d}^* \circ \HH{d}^* =0$, and $\HH{d}^*(1)=0$, we have
$\HH{d}^* h=0$. As $\HH{d}^*$ is a derivation of the DGA $CE^\bullet(C)$,
we also have $\HH{d}^*h'=0$.
Hence, 
\begin{equation}
\HH{d}^*(x \cdot h') = (\HH{d}^*(x) \cdot h') = (1+h)(\sum_{j=0}^\infty  (-1)^j h^j) = 1
\end{equation}
As $1$ is a coboundary, this implies that any $\HH{d}^*$-cocycle $y \in CE^\bullet(C)$ is a coboundary.
$$y = 1 \cdot y = \HH{d}^*(x \cdot h') \cdot y = \HH{d}^*(x \cdot h' \cdot y).$$ 
\end{proof}

In the case that there is no quantum contribution from pseudo-holomorphic discs, one can compute the extended cyclic
Chevalley-Eilenberg cohomology easily.
First, recall the following theorem:
\begin{theorem}[Theorem X, \cite{FOOO}]\label{thm:derham}
In the case that there is no quantum contribution, the $\AI$-algebra of Lagrangian submanifold $(H^*(L,\RR),m)$ is homotopy equivalent to the de Rham complex of $L$ as an $\AI$-algebra.
\end{theorem}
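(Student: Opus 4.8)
The plan is to reduce the statement to the canonical model theorem of \cite{FOOO} recalled above by computing the constant-disc part of the $\AI$-operations in a de Rham chain-level model. First I would replace the singular chain model of \cite{FOOO} by the de Rham model, in which the chain-level $\AI$-algebra is supported on $\Omega^*(L)\otimes\NOVO$ and each operation splits as $m_k=\sum_{\beta}T^{\lambda(\beta)}e^{\mu(\beta)/2}m_{k,\beta}$, with $m_{k,\beta}$ built from the moduli space $\CM_{k+1,\beta}(L)$ by pulling the inputs back along the boundary evaluation maps and pushing forward along the zeroth one. The hypothesis of no quantum contribution means exactly that $m_{k,\beta}=0$ for all $\beta\neq 0$, so that only the constant-disc operations $m_{k,0}$ survive; the whole problem therefore becomes the identification of these classical operations.

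The heart of the argument is the explicit evaluation of $m_{k,0}$. For $\beta=0$ the disc is constant, so the boundary evaluation maps $\mathrm{ev}_0,\ldots,\mathrm{ev}_k$ on $\CM_{k+1,0}(L)$ all coincide with a single map $\mathrm{ev}$ recording the image point, landing in the small diagonal of $L^{k+1}$. Consequently $\mathrm{ev}_1^*\alpha_1\wedge\cdots\wedge\mathrm{ev}_k^*\alpha_k=\mathrm{ev}^*(\alpha_1\wedge\cdots\wedge\alpha_k)$ is pulled back from $L$ and hence is constant along the fibers of the forgetful map $\CM_{k+1,0}(L)\to L$. Since these fibers are the configuration spaces of $k+1$ cyclically ordered boundary points modulo automorphisms, of dimension $k-2$, the fiber integration defining the push-forward annihilates such a form as soon as $k\ge 3$. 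Inspecting the remaining cases gives $m_{1,0}=d$ and $m_{2,0}=\wedge$, together with $m_{0,0}=0$. Thus, in the de Rham model, the no-quantum-contribution chain-level $\AI$-algebra is precisely the strict differential graded algebra $(\Omega^*(L),d,\wedge)\otimes\NOVO$.

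Given this identification, the conclusion is formal. By the well-definedness up to isomorphism of the $\AI$-algebra on $H^*(L,\RR)$ (Theorem A of \cite{FOOO}), I may compute its homotopy type in the de Rham model. There the chain-level $\AI$-algebra is the DGA $(\Omega^*(L),d,\wedge)\otimes\NOVO$, and the canonical model theorem of \cite{FOOO} provides a gapped filtered $\AI$-homomorphism realizing a homotopy equivalence between this DGA and its canonical model on $H^*(L,\RR)\otimes\NOVO$ with $\OL{m}_1=0$. Combined with Proposition \ref{filteredhomo}, which guarantees that such a homotopy equivalence is detected on the level of bar cohomology, this shows that the $\AI$-algebra $(H^*(L,\RR),m)$ is homotopy equivalent to the de Rham complex as an $\AI$-algebra.

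The main obstacle is the fiber-integration computation of $m_{k,0}$: one must construct the de Rham model carefully enough — including the Kuranishi structures on $\CM_{k+1,\beta}(L)$ and the compatibility of the boundary evaluation maps with the forgetful map — to be sure that for constant discs the integrand is genuinely pulled back from $L$ and that the push-forward over a positive-dimensional fiber vanishes. A secondary, but essential, point is the transition from the original singular or current model of \cite{FOOO} to the de Rham model; this is precisely why the statement is confined to $\RR$ coefficients, and why $\QQ$ cannot be used in this final section.
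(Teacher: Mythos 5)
Before anything else, note that the paper contains no proof of this statement: it is imported wholesale as Theorem X of \cite{FOOO} and used as a black box (to compute the extended cyclic Chevalley-Eilenberg cohomology in the absence of quantum contributions). So your proposal can only be compared with the proof in \cite{FOOO}, which is a statement about, and an argument carried out in, their singular-chain model. Within its own terms, your central computation is correct and standard: for $\beta=0$ one has $\CM_{k+1,0}(L)\cong L\times \CM_{k+1}$ with all evaluation maps equal to the projection, the integrand $\mathrm{ev}_1^*\alpha_1\wedge\cdots\wedge\mathrm{ev}_k^*\alpha_k$ is pulled back from the base, the fiber has dimension $k-2$, so fiber integration kills $m_{k,0}$ for $k\geq 3$, leaving $m_{1,0}=\pm d$, $m_{2,0}=\pm\wedge$, $m_{0,0}=0$; this is exactly how the de Rham model is normalized in the later toric work \cite{FOOO2}.

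The genuine gap is the step you dismiss as ``secondary, but essential.'' The object in the theorem, $(H^*(L,\RR),m)$, is the canonical model of the \emph{singular-chain} filtered $\AI$-algebra produced by Theorem A of \cite{FOOO}; your argument computes the canonical model of a \emph{different} chain-level construction, the de Rham model. To transport your conclusion you need that these two chain-level filtered $\AI$-algebras are homotopy equivalent. That comparison is not covered by the ``well-defined up to isomorphism'' clause of Theorem A (which concerns choices internal to the singular-chain construction), nor by Proposition \ref{filteredhomo}, nor by anything else cited in the paper; and in the no-quantum-contribution case it is essentially \emph{equivalent} to the statement being proved, since both amount to identifying the classical singular-chain $\AI$-structure with the de Rham DGA up to homotopy. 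In other words, ``replacing the singular chain model by the de Rham model'' presupposes precisely the comparison theorem whose proof is being proposed, which is why the proof in \cite{FOOO} must (and does) confront the singular-chain operations directly rather than switch models. A smaller point: invoking Proposition \ref{filteredhomo} at the end is both unnecessary and insufficient --- an isomorphism on bar cohomology is weaker than a homotopy equivalence of $\AI$-algebras, and the equivalence between a chain-level algebra and its canonical model is already the content of Theorem 23.2 of \cite{FOOO} as quoted in the paper.
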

\begin{corollary}
In the case that there is no quantum contribution, the extended cyclic Chevalley-Eilenberg cohomology is
isomorphic to 
$$(S(H_*(L,\RR)[1]) \otimes \NOVE)^\wedge.$$
\end{corollary}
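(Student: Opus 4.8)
The plan is to reduce the computation to the classical de Rham model and then to a purely homological statement about symmetric powers of a complex. By Theorem \ref{thm:derham}, the absence of quantum contributions means the $\AI$-algebra $(H^*(L,\RR),m)$ is homotopy equivalent, as a filtered $\AI$-algebra, to the de Rham complex $(\Omega^*(L),d_{dR},\wedge)$. First I would invoke Proposition \ref{filteredhomo} (whose argument, as the author notes there, applies verbatim to the $\HH{E}C$ subcomplex) to conclude that this homotopy equivalence induces an isomorphism of the cyclic Chevalley-Eilenberg homology $H_\bullet(\HH{E}C,\HH{d})$; the hypothesis on $\OM_1$ is met because on the de Rham side $\OM_1=d_{dR}$ computes $H^*_{dR}(L)=H^*(L,\RR)$, matching the canonical model where $\OM_1=0$. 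Hence I am free to compute this homology in the de Rham model and then pass to the extended cyclic Chevalley-Eilenberg cohomology by dualizing via Lemma \ref{ceiso}, using that $\NOVE$ is a field.

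The next step is to identify the underlying $\LI$-structure of the de Rham model. Because the wedge product is graded commutative, its symmetrization---the induced bracket $l_2$---vanishes identically; the surviving de Rham operations $m_1=d_{dR}$ and $m_2=\wedge$ thus contribute only $l_1=d_{dR}$ to the $\LI$-algebra, and all higher $l_k$ vanish since $m_{\geq 3}=0$ for a DGA. Consequently the Chevalley-Eilenberg differential $\HH{d}$ on $\HH{E}(\Omega^*(L))=\HH{\oplus}_k E_k(\Omega^*(L))$ is induced solely by $l_1$, acting as the natural extension of $d_{dR}$ to each graded symmetric power $E_k(\Omega^*(L))\cong S^k(\Omega^*(L)[1])$.

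It then remains to compute the homology of $(S^k(\Omega^*(L)[1]),d_{dR})$ for each $k$. Since we work over $\RR$ (characteristic zero), the graded K\"unneth theorem gives $H_\bullet\big(S^k(\Omega^*(L)[1]),d_{dR}\big)\cong S^k\big(H^*_{dR}(L)[1]\big)=S^k\big(H^*(L,\RR)[1]\big)$, and summing over $k$ and completing yields the cyclic Chevalley-Eilenberg homology $\HH{\oplus}_k S^k(H^*(L,\RR)[1])$. Dualizing this symmetric coalgebra by Lemma \ref{ceiso} turns it into the completed symmetric algebra on the dual space; the degree convention $|x_i|'=-|e_i|'$ fixed for the dual variables just before the definition of $CE^\bullet(C)$ is exactly what aligns the shift, so that together with $(H^*(L,\RR))^*\cong H_*(L,\RR)$ one obtains precisely $(S(H_*(L,\RR)[1])\otimes\NOVE)^\wedge$, as claimed.

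I expect the main obstacle to be the careful verification of the two structural claims in the middle step: that the commutator bracket $l_2$ really vanishes with the correct Koszul signs, and that no higher brackets are generated. The K\"unneth argument itself is routine in characteristic zero, but I would need to check that it is compatible with the energy completion and the flat base change to the Novikov field $\NOVE$---here the fact that $\NOVE$ is a field, so that $\HH{E}C$ is a complex of free modules with $\NOVE$-linear differential, is what makes both the K\"unneth decomposition and the duality of Lemma \ref{ceiso} go through without any $\mathrm{Ext}$ correction terms.
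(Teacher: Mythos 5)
Your proposal is correct, and it agrees with the paper's proof in its first half: both reduce to the de Rham model via Theorem \ref{thm:derham}, and both observe that graded commutativity of the wedge product kills $l_2$ while $\OL{m}_{k}\equiv 0$ for $k\geq 3$ kills all higher brackets, so the induced $\LI$-structure is just the complex $(\Omega^*(L),d_{dR})$ with no brackets. Where you genuinely diverge is the endgame. The paper applies homotopy invariance a \emph{second} time: it passes to an $\AI$-structure on $H_*(L,\RR)$ (the canonical model) whose induced $\LI$-structure is claimed to be trivial, so that $\HH{d}=0$ on $\HH{E}C$ and there is literally nothing left to compute --- all completion issues are absorbed into the already-proven invariance statements (Proposition \ref{filteredhomo} and Lemma \ref{ceiso}). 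You instead compute the homology of $(\HH{E}(\Omega^*(L)),\HH{d})$ head-on via the characteristic-zero K\"unneth isomorphism $H(E_k(\Omega^*(L)))\cong S^k(H^*(L,\RR)[1])$, using that taking $S_k$-invariants is exact over $\RR$. Your route actually buys something the paper glosses over: the paper's assertion that the canonical model's induced $\LI$-structure is trivial is not immediate, since the canonical model can carry nonzero Massey-type products $m_{\geq 3}$; justifying it requires, e.g., that the minimal model of a graded commutative DGA can be chosen $C_\infty$ (so that its symmetrization vanishes), or an $\LI$-level minimal-model argument --- whereas your computation never needs to know anything about the canonical model's higher operations. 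The price is the point you flagged yourself: you must check that K\"unneth survives the energy completion, i.e.\ that $H(\HH{\oplus}_k E_k)\cong\HH{\oplus}_k H(E_k)$. This does hold here, and the check is short: the differential is the $\NOVE$-linear, energy-zero extension of an operation defined over $\RR$, so an $\RR$-linear Hodge-type contraction of $(\Omega^*(L),d_{dR})$ extends to each symmetric power and then, having energy zero, to the completion; with that supplied (and noting that the hypothesis of Proposition \ref{filteredhomo} holds automatically because any homotopy equivalence induces an isomorphism on $\OM_1$-cohomology --- your phrasing of this point is a bit muddled but the conclusion is right), your argument is complete.
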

\begin{proof}
Note that de Rham complex is a differential graded algebra, hence $\OL{m}_{k} \equiv 0$ for $k\geq 3$. 
And the product $\OL{m}_2$ is graded commutative. Hence $\OL{l}_k \equiv 0$ for $k \geq 2$.
Hence the $\AI$-algebra of Lagrangian submanifold in this case is homotopy equivalent to another $\AI$-algebra structure on the singular homology $H_*(L,\RR)$ whose induced $\LI$-structure is trivial. As the (extended) cyclic Chevalley-Eilenberg cohomology
is an invariant of the homotopy class, and all the differential vanish in the latter case, hence the claim follows.
\end{proof}

Now, we can prove the theorem stated in the introduction.
\begin{theorem}
Let $L$ be a relatively spin Lagrangian submanifold in a symplectic manifold $(M,\omega)$ with
 non-vanishing primary obstruction cycle. Let $A$ be the $\AI$-algebra of $L$.
 Then its extended cyclic Chevalley-Eilenberg cohomology vanishes.
\end{theorem}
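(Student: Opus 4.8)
The plan is to exhibit an element $x \in CE^\bullet(C)$ with $\HH{d}^*(x) = 1 + h$, where $h$ has only positive-length terms and $\WT{\tau}(h) \geq 0$, and then invoke Proposition \ref{lem:cevanish} to conclude vanishing. The first and most important reduction is to arrange that the differential $\HH{d}^*$ \emph{strictly raises energy}. Since the extended cyclic Chevalley--Eilenberg cohomology is a homotopy invariant (Proposition \ref{filteredhomo} together with Lemma \ref{ceiso}), I would replace $A$ by a homotopy-equivalent gapped filtered $\AI$-algebra whose energy-zero part $\OL{A}$ is a minimal model of the de Rham complex of $L$ over $\RR$ (Theorem \ref{thm:derham}). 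Because the de Rham algebra is graded commutative, its minimal model can be chosen to be a $C_\infty$-algebra, so the induced classical $\LI$-brackets all vanish: $\OL{l}_k = 0$ for $k \geq 1$. Moreover $\OL{m}_0 = 0$ automatically, since $m_0(1) \in F^{\lambda'}C$ with $\lambda' > 0$ by \eqref{menergy}. Hence the energy-zero part of the codifferential $\HH{d}$ on $\HH{E}C$ vanishes, so both $\HH{d}$ and its dual $\HH{d}^*$ increase energy by at least $E_0 := \min\{\omega(\beta) : \beta \neq 0,\ m_{k,\beta} \neq 0 \text{ for some } k\} > 0$.

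With this model in place, I would use the non-vanishing primary obstruction. Writing $\{e_i\}$ for a basis of $H^*(L)$ with $\tau(e_i) = 0$ and $x_i = e_i^*$ for the dual variables, the leading (energy-$E_0$) term of $m_0(1)$ is a nonzero class, so there is a basis element $e_{i_0}$ whose coefficient $b_0 := x_{i_0}(m_0(1))$ in $m_0(1)$ has nonzero energy-$E_0$ leading term; as $\NOVE$ is a field, $b_0$ is invertible with $\WT{\tau}(b_0) = E_0$. Since $\HH{d}^*$ is the derivation dual to $\HH{d}$, the length-zero component of $\HH{d}^*(x_{i_0})$ is $x_{i_0}(\HH{d}(1)) = x_{i_0}(m_0(1)) = b_0$, and because $\HH{d}^*$ raises energy by at least $E_0$, every term of $\HH{d}^*(x_{i_0})$ has $\WT{\tau} \geq E_0$. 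Setting $x = b_0^{-1} x_{i_0}$, I get $\HH{d}^*(x) = 1 + h$, where the length-zero part is exactly $b_0^{-1} b_0 = 1$, so $h$ consists solely of positive-length terms, each of energy $\geq 0$, i.e. $\WT{\tau}(h) \geq 0$. Proposition \ref{lem:cevanish} then shows that $1$ is a coboundary and hence that every $\HH{d}^*$-cocycle is a coboundary, so the extended cyclic Chevalley--Eilenberg cohomology vanishes.

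The routine steps are the derivation computation of $\HH{d}^*(x_{i_0})$ and the division by the invertible leading coefficient; the genuine obstacle is the reduction in the first paragraph. One must justify that, after a filtered homotopy equivalence, the \emph{classical} induced $\LI$-structure can be made to vanish identically --- equivalently, that the energy-zero $\AI$-algebra may be taken $C_\infty$ (commutative up to coherent homotopy). This is exactly where the restriction to $R = \RR$ enters, via the de Rham comparison of Theorem \ref{thm:derham}: over $\QQ$ one cannot appeal to the de Rham model, which is why the statement is confined to the last section. I would need to check that the filtered canonical-model construction applied to the de Rham model produces an energy-zero part that is still $C_\infty$, so that the energy-zero part of $\HH{d}^*$ is genuinely zero; this is what guarantees that dividing by $b_0$ does not create terms of negative $\WT{\tau}$, which would otherwise break the convergence of $\sum_j (-1)^j h^j$ required by Proposition \ref{lem:cevanish}.
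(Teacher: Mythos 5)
Your proposal is correct and takes essentially the same route as the paper's own proof: use homotopy invariance together with the de Rham comparison (Theorem \ref{thm:derham}) to pass to a model whose classical induced $\LI$-structure vanishes, so that $\HH{d}^*$ raises energy by at least the minimal disc energy, then pair a dual variable against the leading term of $m_0(1)$, normalize by its invertible coefficient, and invoke Proposition \ref{lem:cevanish}. The only cosmetic difference is that you dualize a basis element carrying a nonzero coefficient in the leading term of $m_0(1)$, whereas the paper dualizes a primary obstruction cycle $\mathcal{O}_s$ and excludes cancellation via the distinct Maslov indices --- the same fact on which your nonvanishing-leading-term claim rests.
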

\begin{proof}
 We will construct an element $x$ which satisfies
the assumption of the Proposition \ref{lem:cevanish}. 

We briefly recall the definition of a primary obstruction cycle.
We label $$0=\beta_0,\beta_1,\cdots,\beta_k,\cdots,$$ the equivalence classes of homotopy classes of pseudo-holomorphic discs with boundary on $L$, where two homotopy classes are equivalent if they have the same Maslov indices and symplectic energies.
Here enumeration is made so that $\omega(\beta_i) \leq \omega(\beta_{i+1})$ for a symplectic form $\omega$.

Suppose that $\lambda :=\omega(\beta_1)=\cdots = \omega(\beta_j) < \omega(\beta_{j+1})$ for some $j \geq 1$.
As we consider equivalence classes, we have 
$$\mu(\beta_s) \neq \mu(\beta_t)\; \textrm{for any} \;1 \leq s \neq t \leq j$$

As the classes $\beta_1,\cdots,\beta_j$ are minimal classes, the boundary image of holomorphic discs in the
class $\beta_s$, which is $m_{0,\beta_s}(1)$, defines a cycle of $\OL{m}_1$ for each
$s=1,\cdots,j$.
Primary obstruction cycles are defined as 
$\mathcal{O}_{s} = m_{0,\beta_s}(1)$ for each $s$.

Now we assume that we work on the canonical model $A_{can}$ of $A$, and the induced $\LI$-algebra structure 
$\WT{A}_{can}$ is trivial as in the above corollary. This means that we have $l_{k,\beta_0} = \OL{l}_k \equiv 0$.

The $\OL{m}_1$-cycle $\mathcal{O}_s$ is non-trivial and in the canonical model, we still have $m_{0,\beta_s}(1) = \mathcal{O}_s$.
Here we may work on the canonical model as we have proved that extended cyclic Chevalley-Eilenberg cohomology is an invariant of homotopy class of $A$.

We set $x_s$ to be a dual variable to $\mathcal{O}_s$ in $CE^\bullet(C)$.
Then, 
$$\HH{d}^* x_s (1) = x_s \big(\HH{d}(1) \big) =  x_s \big( m_{0,\beta_1}T^{\lambda}e^{\mu(\beta_1)} + 
\cdots m_{0,\beta_1}T^{\lambda}e^{\mu(\beta_s)}) + \textrm{higher energy terms}$$
$$= 1 \cdot T^{\lambda}(e^{\mu(\beta_s)} + \xi) + T^{\lambda}\eta =: a_0 \cdot T^{\lambda}.$$
Here as $m_{0,\beta_t}$ may have non-trivial $x_s$ value for $t \neq s$, hence we write 
such contribution as $\xi$, where there cannot be any cancellation as each $\mu(\beta_t)$ is distinct.
And by $\eta \in F^{>0}\NOVE$, we denote the rest with higher energy.
Clearly, $$a_0T^{\lambda} = \HH{d}^*x_s(1) \neq 0.$$
Note that $a_0$ is invertible and consider its inverse $1/a_0$.
Consider $$y = \frac{1}{a_0}T^{-\lambda} x_s.$$
Then, we have $\HH{d}^* y(1) = 1$ by definition. Hence
we have $$\HH{d}^* y = 1 + h,$$
where $h$ has terms of positive length.
Also note that we have $\WT{\tau}(h) \geq 0$ because $\lambda$ is the minimal energy with non-trivial $\LI$-algebra operation.
Hence, $y$ satisfies the assumption of the lemma \ref{lem:cevanish} and implies the desired vanishing property.
\end{proof}

We remark that the related Proposition 1.3 in \cite{CL} is somewhat different due to a different choice of filtration \ref{CLfilt}.
It seems that in such a case it does not recognize the unobstructedness as in the above Theorem. Assume in the above proof that we work in the
chain level (not in the canonical model) and suppose all the primary obstructions vanish. i.e. there exists a chain $b_s$
with $\OL{m}_1(b_s) = - \mathcal{O}_s$. Let us assume that $m_{0,\beta_s}$ is a chain which is not zero. 
(i.e. $b_s$ is not zero). Then, consider a dual variable $x_s$ of $m_{0,\beta_s}$. Note that $m_{0,\beta_s}$ is homologically
trivial, but as we take dual on the chain level we have a corresponding dual variable. Then we have as before
$$\HH{d}^* x_s (1) = a_0T^{\lambda},$$
for a non-trivial $a_0$ with $\tau(a_0)=1$.
But also 
$$\HH{d}^* x_s( b_s) = x_s(m_1(b_s)) = x_s(\OL{m}_1(b_s)) + \; \textrm{higher energy terms}.$$
Here we have
$$x_s(\OL{m}_1(b_s)) = x_s(-\mathcal{O}_s) = -1.$$
If we denote the dual variable of $b_s$ to be $x_s'$, then we have 
$$\HH{d}^* x_s = a_0 T^{\lambda} - x_s'+ h,$$
for some $h$. Hence, $\HH{d}^*(T^{-\lambda}x_s)$ will have a component
$-T^{-\lambda}x_s'$ which has negative energy.

Recall that in the proof of the lemma \ref{lem:cevanish}, one takes $\sum_{j=0}^\infty (-1)^j (T^{-\lambda}x_s')^j$ which
would have unbounded negative energy.
With the filtration (\ref{CLfilt}) of \cite{CL}, such an expression is allowed 
and it will prove the  vanishing of the homology. 

But in the case of our paper, such an expression with unbounded energy is not allowed
and hence such an argument cannot be used to prove the vanishing of 
Chevalley-Eilenberg cohomology.

\subsection{Chevalley-Eilenberg cohomology}
Similarly, we take the topological dual of the Chevalley-Eilenberg chain complexes defined in the section \ref{sec:CEho}
for $\LI$-modules $M$ over $\LI$-algebra $\WT{A}=(C,l)$, and call its homology a Chevalley-Eilenberg cohomology $CE^\bullet(\WT{A},M)$. In fact we will only consider the case $M=\WT{A}$. 
By proceeding as in the previous subsection, we obtain

\begin{definition}
We define the Chevalley-Eilenberg cochain $\big(CE_\bullet(\WT{A},\WT{A})\big)^*$ alternatively as
$$CE^\bullet(C,C) = C \otimes CE^\bullet(C)= C\otimes  (S\RR<x_i>_{i\in I} \otimes \NOVE)^\wedge.$$
Coboundary operation is given by $(d^{CE})^*$, to define the Chevalley-Eilenberg cohomology.
\end{definition}
By proceeding as in the standard universal coefficient theorem, one can prove that
\begin{lemma}
There exists a natural map from the homology of $(CE^\bullet(\WT{A},\WT{A}),(d^{CE})^*)$ to the topological dual of
the homology of $(CE_\bullet(\WT{A},\WT{A}),d^{CE})$ which is an isomorphism.
\end{lemma}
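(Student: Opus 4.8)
The plan is to reproduce, verbatim, the universal coefficient argument already carried out for Lemma \ref{ceiso}, now carrying the coefficient module $M=\WT{A}$ along as an inert extra tensor factor in $CE_\bullet(\WT{A},\WT{A}) = \HH{\oplus}_k \WT{A}[1]\otimes E_kC$. Since the coefficient ring $\NOVE$ is a field, there is no Ext obstruction, so the entire content is the construction of a natural comparison map together with the verification that the energy filtration (hence continuity, in the sense of Lemma \ref{dual}) is respected throughout.

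First I would construct the natural map. A cohomology class in $H_\bullet(CE^\bullet(\WT{A},\WT{A}),(d^{CE})^*)$ is represented by a continuous $\NOVE$-linear functional $\phi:CE_\bullet(\WT{A},\WT{A})\to\NOVE$ with $(d^{CE})^*\phi=\phi\circ d^{CE}=0$. Such a $\phi$ vanishes on the image of $d^{CE}$ and hence restricts to a continuous functional on $\ker d^{CE}$ which factors through the quotient $H_\bullet(CE_\bullet(\WT{A},\WT{A}),d^{CE})$; moreover a coboundary $(d^{CE})^*\psi=\psi\circ d^{CE}$ restricts to zero on $\ker d^{CE}$, so the assignment $[\phi]\mapsto \phi|_{\ker d^{CE}}$ descends to a well-defined map into the topological dual of the homology, which is the map asserted in the statement.

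Next I would prove bijectivity by splitting the complex over the field $\NOVE$, exactly as in the decomposition $C_\bullet(A,A)=\ker(\HH{d})\oplus V$ used inside the proof of Lemma \ref{lem:contho}. Choosing such a splitting compatibly with the energy filtration, write $CE_\bullet=Z_\bullet\oplus W_\bullet$ with $d^{CE}|_{W_\bullet}$ an isomorphism onto the image of $d^{CE}$, and $Z_\bullet=B_\bullet\oplus H'_\bullet$ with $H'_\bullet$ a filtered complement representing homology. Dualizing, surjectivity follows by extending a given continuous functional on $H'_\bullet$ to the whole complex by zero on $B_\bullet\oplus W_\bullet$ (this extension is continuous by Lemma \ref{dual}, since its coefficients retain a valuation bounded below), and injectivity follows because a cocycle whose restriction to $H'_\bullet$ vanishes differs from the zero functional by a coboundary supported on $W_\bullet$.

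The main obstacle is the topological bookkeeping rather than any algebra: the splittings must be chosen so that both the projection onto $H'_\bullet$ and the extension-by-zero preserve the filtration, so that continuous functionals correspond to continuous functionals under the identification of Lemma \ref{dual}. Once this is arranged, every step is word-for-word that of Lemma \ref{ceiso}, and I would simply invoke the proof of that lemma, noting that the presence of the inert factor $\WT{A}[1]$ affects none of the filtration estimates.
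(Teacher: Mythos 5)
Your proposal is correct and takes essentially the same approach as the paper: the paper proves this lemma exactly as it proves Lemma \ref{ceiso}, namely by invoking the standard universal coefficient argument over the field $\NOVE$ (citing \cite{DK}), with the module factor $\WT{A}[1]$ treated as an inert tensor factor that does not affect the filtration estimates. Your explicit construction of the comparison map and the filtration-compatible splitting is simply a more detailed rendering of the argument the paper leaves to the reader.
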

\begin{corollary}
If $L$ is displaceable from itself via Hamiltonian isotopy, its Chevalley-Eilenberg cohomology vanishes.
\end{corollary}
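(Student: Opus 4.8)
The plan is to obtain this vanishing as a direct consequence of the homological statement already in hand, transported to the cohomology side through the duality result just proved. First I would observe that, by definition, the Chevalley-Eilenberg cohomology of $L$ in question is the homology of the cochain complex $(CE^\bullet(\WT{A},\WT{A}),(d^{CE})^*)$, and that the preceding Lemma provides a canonical isomorphism from this homology to the topological dual of the Chevalley-Eilenberg Floer homology $H^{CE}_\bullet(\WT{A},\WT{A}) = H^{CE}_\bullet(L,L)$. Hence it suffices to prove that the latter homology vanishes.

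Next I would invoke Corollary \ref{isoiso2}: since $L$ is displaceable by a Hamiltonian isotopy $\phi^1$, we have $L \cap \phi^1(L) = \emptyset$, so the $\LI$-bimodule $\WT{M}$ attached to the pair is void. Combined with the identification $H^{CE}_\bullet(\WT{A},\WT{M}) \cong H^{CE}_\bullet(\WT{A},\WT{A})$ of Corollary \ref{isoiso} (which rests on the homotopy-invariance Proposition \ref{homotopyequiv}), this forces $H^{CE}_\bullet(L,L) = H^{CE}_\bullet(\WT{A},\WT{A}) = 0$. The point to stress is that these invariance and vanishing statements are applied \emph{at the level of homology}, before any dualization is performed; this is precisely the order in which the argument must proceed, because the invariance results of Sections~3 and~4 are proved on the chain and homology side and only afterwards carried over to cohomology via the Lemma.

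Finally, since the topological dual of the trivial $\NOVE$-module is trivial, the isomorphism of the preceding Lemma yields $H_\bullet(CE^\bullet(\WT{A},\WT{A}),(d^{CE})^*) = 0$, which is the asserted vanishing of the Chevalley-Eilenberg cohomology. No extension-type obstruction intervenes in the universal-coefficient step because $\NOVE$ is a field.

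The step requiring the most care, and the only genuine subtlety, is the bookkeeping about \emph{when} to dualize. The completion underlying $CE^\bullet(\WT{A},\WT{A})$ uses the energy filtration of Lemma \ref{dual}, which restricts the admissible infinite sums to those with coefficient valuations bounded below; consequently one cannot simply run the displaceability argument directly on the cochain side. The safe route, which this plan follows, is to establish the vanishing of the homology first and then pass to the dual, so that the delicate convergence features of the completion are never invoked in the displaceability step itself.
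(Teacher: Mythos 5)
Your proposal is correct and follows exactly the paper's own route: the paper's proof simply cites Corollary \ref{isoiso2} (vanishing of the Chevalley-Eilenberg Floer homology $H^{CE}_\bullet(L,L)$ when $L$ is displaceable, which itself rests on Corollary \ref{isoiso} and the voidness of the module $\WT{M}$) together with the preceding duality lemma identifying the cohomology with the topological dual of the homology. Your additional remark about dualizing only after the vanishing is established at the homology level is precisely the order implicit in the paper's argument, so there is nothing to correct.
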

\begin{proof}
This follows from the corollary \ref{isoiso2} and the above lemma.
\end{proof}
Now, we can prove the remaining part of the theorem 1.1.
\begin{theorem}
If a Lagrangian submanifold $L$ has a non-trivial primary obstruction class, then its Chevelley-Eilenberg cohomology vanishes.
\end{theorem}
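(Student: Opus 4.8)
The plan is to reduce this to the extended cyclic version just proved by exhibiting $\big(CE^\bullet(C,C),(d^{CE})^*\big)$ as a differential graded module over the commutative DGA $\big(CE^\bullet(C),\HH{d}^*\big)$ of the previous subsection, and then running the Cornea--Lalonde argument of Proposition \ref{lem:cevanish} at the level of modules. Since $CE^\bullet(C,C)=C\otimes CE^\bullet(C)$, multiplication in the second tensor factor makes it a module over the algebra $CE^\bullet(C)$, with the unit $1$ acting as the identity. The two summands of $d^{CE}$ (one built from the $\LI$-module maps $\eta$, one from the $\LI$-algebra maps $l$) dualize so that $(d^{CE})^*$ becomes a derivation over $\HH{d}^*$ for this action, since $\HH{d}^*$ is precisely the trivial-coefficient (i.e. $l$-only) differential on $CE^\bullet(C)$.

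The key structural step is therefore to verify the Leibniz identity
\begin{equation*}
(d^{CE})^*(a\cdot m)=(\HH{d}^*a)\cdot m+(-1)^{|a|'}\,a\cdot (d^{CE})^*m, \qquad a\in CE^\bullet(C),\ m\in CE^\bullet(C,C).
\end{equation*}
This is dual to the statement that the Chevalley--Eilenberg chains $CE_\bullet(\WT{A},\WT{A})$ form a comodule over the cyclic chains $\HH{E}C$, and it follows from matching the shuffle decompositions and Koszul signs in the $\LI$-algebra and $\LI$-module equations of Definition \ref{def:lmod}. I expect this sign and shuffle bookkeeping, rather than any conceptual point, to be the main obstacle.

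Granting the module structure, the vanishing is immediate. Working on the canonical model $A_{can}$ --- where the induced $\LI$-structure is trivial and the primary obstruction cycle $\mathcal{O}_s=m_{0,\beta_s}(1)$ persists --- the construction in the proof of the preceding theorem produces $y\in CE^\bullet(C)$ with $\HH{d}^*y=1+h$, where $h$ has only positive-length terms and $\WT{\tau}(h)\geq 0$. Hence $h'=\sum_{j\geq 0}(-1)^j h^j$ lies in $CE^\bullet(C)$ and $\HH{d}^*(y\cdot h')=1$. For an arbitrary $(d^{CE})^*$-cocycle $z\in CE^\bullet(C,C)$, the Leibniz identity together with $(d^{CE})^*z=0$ gives
\begin{equation*}
(d^{CE})^*\big((y\cdot h')\cdot z\big)=\big(\HH{d}^*(y\cdot h')\big)\cdot z=1\cdot z=z,
\end{equation*}
so that $z$ is a coboundary. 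Therefore $H^\bullet\big(CE^\bullet(C,C),(d^{CE})^*\big)=0$, which is the assertion of the theorem.
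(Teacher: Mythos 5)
Your proposal is correct and follows essentially the same route as the paper: the paper likewise observes that $CE^\bullet(C,C)$ is a differential graded module over the DGA $CE^\bullet(C)$ (obtained as the dual of a comodule structure) and deduces vanishing from the triviality of the algebra's cohomology established in the preceding theorem. The only difference is one of explicitness --- where the paper says ``it is easy to show,'' you spell out the Leibniz identity and the cocycle-killing computation $(d^{CE})^*\big((y\cdot h')\cdot z\big)=z$, which is exactly the intended argument.
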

\begin{proof}
This proceeds as in the remark 1.11 of \cite{CL}. Namely, one can see that $CE^\bullet(C,C)$ has a differential graded right module structure over a differential graded algebra $CE^\bullet(C)$ (which is obtained as a dual of a comodule).
 Hence when the homology of $CE^\bullet(C)$ is trivial, it is easy to show that  the homology of $(CE^\bullet(C,C),(d^{CE})^*)$ is also trivial.
\end{proof}

\bibliographystyle{amsalpha}

\end{document}